\theoremstyle{plain}
\newtheorem{Thm}{Theorem}[section]
\newtheorem{Prop}[Thm]{Proposition}
\newtheorem{Lem}[Thm]{Lemma}
\theoremstyle{definition}
\theoremstyle{remark}
\newtheorem{Rmk}[Thm]{Remark}
\newtheorem*{proof*}{proof}
\newcommand{\A}{A}
\newcommand{\dm}[1]{d#1}
\newcommand{\E}{D}
\newcommand{\Ebr}{D}
\newcommand{\s}{s}
\newcommand{\Ca}{C_1(\varepsilon, \delta)}
\newcommand{\Cb}{C_2(\delta)}
\newcommand{\Cc}{C_3}
\renewcommand*\showkeyslabelformat[1]
\begin{document}

\allowdisplaybreaks[4]
\abovedisplayskip=4pt
\belowdisplayskip=4pt

\title{Large deviation principle for the intersection measure of Brownian motions on unbounded domains}
\author
{%
 Takahiro Mori%
 \thanks
 {
  Research Institute for Mathematical Sciences,
  Kyoto University, Kyoto, 606-8502, JAPAN.
  \url{tmori@kurims.kyoto-u.ac.jp}
 }
}
\date{}

\maketitle

\begin{abstract}
Consider
the intersection measure $\ell^{\mathrm{IS}}_t$ of
$p$ independent Brownian motions on $\mathbb{R}^d$.
%
In this article,
we prove the large deviation principle for
the normalized intersection measure
$t^{-p}\ell^{\mathrm{IS}}_t$ as $t\rightarrow \infty$,
before
exiting a (possibly unbounded) domain
$D\subset\mathbb{R}^d$ with smooth boundary.
This is an extension of 
[W. K\"onig and C. Mukherjee:
Communications on Pure and Applied Mathematics,
66(2):263--306, 2013]
which deals with the case $D$ is bounded.
Our essential contribution is to prove
the so-called super-exponential estimate
for the intersection measure of
killed Brownian motions on such $D$
by an application of the Chapman-Kolmogorov relation.
\end{abstract}

\medskip
\noindent
{\small
 {\bf Keywords:}
 Intersection measure; Large deviations
}

\medskip
\noindent
{\small
 {\bf Mathematics Subject Classification (2020):}
 60J65 (primary); 60F10
}



\section{Introduction}
\label{Sec_intro}

Analysis of the intersection of the Brownian paths
begins with the series of studies by
Dvoretzky, Erd\H{o}s, Kakutani and Taylor
\cite{MR0034972,MR0067402,MR0094855},
which give the following dichotomy:
for $p$ independent Brownian motions
$B^{(1)}, \ldots, B^{(p)}$ on $\mathbb{R}^d$,
the paths intersect,
i.e.,
$
 B^{(1)}(0, \infty)
\cap
 \cdots
\cap
 B^{(p)}(0, \infty)
\not=
 \varnothing
$
almost surely
if
$d-p(d-2)>0$,
and
do not intersect almost surely
if
$d-p(d-2)\leq 0$.
Motivated by physical problems
such as the configurations of interacting polymers,
two random measures that measure
the intensity of the intersections of the paths
have been introduced.
One is called the \emph{intersection local time} and
the other is called the \emph{intersection measure}.
A brief overview is given in Section \ref{Sec_relworks}.
Large deviation principles for this kind of measures
have recently been applied
by Mukherjee \cite{MR3716856}
to study a model of mutually interacting polymers, and
by Adams, Bru and K{\"o}nig \cite{MR2257650}
to prove the Gross-Pitaevskii formula
for the model of particles
with Dirac interaction potential.

In this paper,
we consider the \emph{(mutual) intersection measure}
introduced by K\"onig and Mukherjee \cite{MR2999298},
which is formally written as

\noindent
\begin{equation}
\label{eq_ISmeas}
 \ell^{\mathrm{IS}}_t(A)
=
 \int_A
 \biggl[
  \int_{[0, t]^p}
   \prod_{i=1} ^p
   \delta_x(B^{(i)}(s_i) )
  ds_1\cdots ds_p
 \biggr]
 dx
\quad
 \text{for }
 A\subset \mathbb{R}^d
 \text{ Borel}
\end{equation}
under the regime $d-p(d-2)>0$,
where $\delta_x$ is the Dirac measure at $x$
(see
Section \ref{Sec_results} for a precise definition).
Here and in the following,
the superscript ``IS'' means ``InterSection''.
A contribution of this paper is
the Donsker-Varadhan type large deviation principle
for the intersection measure
$\ell^{\mathrm{IS}}_t(dx)$ as $t\rightarrow \infty$,
before exiting an unbounded domain
$D\subset \mathbb{R}^d$ with smooth boundary.
This is roughly written as

\noindent
\begin{align}
\label{eq_LDPheu}
 \mathbb{P}
 \Bigl(
  (
   t^{-p}\ell^{\mathrm{IS}}_t
  ;
   t^{-1}\ell^{(1)}_t
  ,
   \dots
  ,
   t^{-1}\ell^{(p)}_t
  )
 \approx
  \boldsymbol{\mu}
 ,
  t
 <
  \tau^{(1)}_D \wedge \cdots \wedge \tau^{(p)}_D
 \Bigr)
\approx
 \exp
 \biggl\{
  -t
  \sum_{i=1} ^p
  \frac{1}{2}
  \int_D
   |\nabla \psi^{(i)}|^2
  dx
 \biggr\}
\end{align}

\noindent
as $t\rightarrow \infty$
(see Theorem \ref{Thm_LDPunbdd} for a precise meaning),
where
$\ell^{(i)}_t$ and $\tau^{(i)}_D$ are the
occupation measure and the exit time from $D$ of
the process $B^{(i)}$ respectively,
and
$
 \boldsymbol{\mu}
=
 (
  \mu
 ;
  \mu^{(1)}
 ,
  \ldots
 ,
  \mu^{(p)}
 )
$
is a tuple of a Radon measure
and $p$ probability measures on $D$
of the form
$
 \frac{d\mu}{dx}
=
 \prod_{i=1} ^p
 \frac{d\mu^{(i)}}{dx}
$
and
$
 \psi^{(i)}
=
 \sqrt{\frac{d\mu^{(i)}}{dx}}
\in
 W^{1, 2}_0(D)
$,
the Sobolev space with zero boundary values.
Previously,
K\"onig and Mukherjee \cite{MR2999298} showed
such large deviation result when $D$ is bounded.

This paper is organized as follows.
In Section \ref{Sec_results},
we recall the definition of
the intersection measure and state our main result
(Theorem \ref{Thm_LDPunbdd}),
the large deviation principle
of the intersection measure.
Section \ref{Sec_relworks}
summarizes earlier works related to our main result.
Section \ref{Sec_outline} is the outline
of the proof of the main result.
In this section,
we state the super-exponential estimate
(Theorem \ref{Thm_superexp}),
a key theorem to prove the main result.
We prove this
in the following Section \ref{Sec_superexp}.
In Section \ref{Sec_lowerLDP} and \ref{Sec_upperLDP},
we prove the large deviation lower and upper bound,
respectively.
Finally in Section \ref{Sec_stable},
we discuss an extension of the main theorems
from Brownian motions to other processes such as
the stable processes.


\subsection{Settings and main results}
\label{Sec_results}

Suppose
$D\subset \mathbb{R}^d$ be
a (possibly unbounded) domain with smooth boundary.
Let
$\partial$ be a point added to $D$ so that
$
 D_\partial
:=
 D\cup \{\partial\}
$
is the one-point compactification of $D$.
A killed Brownian motion $X$ in $D$ is the process
given by
\begin{equation*}
 X_t
=
 \begin{cases}
  B_t,      & t< \tau_D,
 \\
  \partial, & t\geq \tau_D,
 \end{cases}
\end{equation*}
where
$B$ is a Brownian motion on $\mathbb{R}^d$
and
$\tau_D = \inf\{t>0: B_t \not\in D\}$
is the exit time of $B$ from $D$.
We write
the continuous transition density function
and the $\alpha$-order resolvent density function
of $X$ by $p_t(x, y)$ and $r_\alpha(x, y)$,
respectively.

Set
the {\it ball average kernel} $q_\varepsilon(x, y)$ by
\begin{equation*}
 q_\varepsilon(x, y)
:=
 \frac{1}{|B(x, \varepsilon)|}
 1_{B(x, \varepsilon)}(y)
,
 \quad
 \varepsilon > 0
,
 x, y\in\mathbb{R}^d
\end{equation*}

\noindent
and
the {\it ball average operator} $T_\varepsilon$ by
\begin{equation*}
 T_\varepsilon f(x)
=
 \int_{\mathbb{R}^d}
  q_\varepsilon(x, y)
  f(y)
 dy
,
\quad
 \varepsilon > 0
,
 x\in \mathbb{R}^d
,
 f\in \mathcal{B}_b(\mathbb{R}^d)
,
\end{equation*}

\noindent
where
$B(x, \varepsilon)$ is the open ball
$
 \{
  y\in \mathbb{R}^d
 ;
  |x-y|<\varepsilon
 \}
$
and
$\mathcal{B}_b(\mathbb{R}^d)$ is the set
of bounded Borel functions on $\mathbb{R}^d$.
We note that
$T_\varepsilon$ is $L^r(\mathbb{R}^d)$-contractive
and strongly continuous as $\varepsilon \rightarrow 0$
for any $r\geq 1$.

Suppose that
$p\geq 2$ is an integer with $d-p(d-2)>0$.
Let
$X^{(1)}, \ldots, X^{(p)}$
be independent killed Brownian motions in $D$.
Throughout
this article, we fix their initial points
$x^{(1)}_0, \ldots, x^{(p)}_0 \in D$.
We write
$\tau^{(1)}_D, \ldots, \tau_D^{(p)}$
as their exit times from $D$, respectively.
For each $\varepsilon >0$,
we define
the {\it approximated (mutual) intersection measure}
$
 \ell^{\mathrm{IS}}_{t, \varepsilon}
$
of $X^{(1)}, \ldots, X^{(p)}$ up to time $t$ by
\begin{equation*}
 \langle
 \ell^{\mathrm{IS}}_{t, \varepsilon}
 ,
  f
 \rangle
=
 \int_{D}
  f(x)
  \biggl[
   \prod_{i=1} ^p
   \int_0 ^t
    q_\varepsilon(x, X^{(i)}_s)
   ds
  \biggr]
 \dm{x}
\end{equation*}
for $f\in \mathcal{B}_b(D)$,
with the convention
$
 q_\varepsilon(x, X^{(i)}_s)
=
 0
$
when $s\geq \tau^{(i)}_D$.
It is well known that for each $t>0$,
there exists a random measure
$\ell^{\mathrm{IS}}_t$
such that
$
 \ell^{\mathrm{IS}}_{t, \varepsilon}
$
converges vaguely to
$\ell^{\mathrm{IS}}_t$
in
$\mathcal{M}(D)$
and that
\begin{equation*}
 \lim_{\varepsilon \rightarrow 0}
 \mathbb{E}
 \bigl[
  |
   \langle
    f, \ell^{\mathrm{IS}}_{t, \varepsilon}
   \rangle
  -
   \langle
    f, \ell^{\mathrm{IS}}_{t}
   \rangle
  |^k
 \bigr]
=
 0
\quad
 \text{for any integer $k\geq 1$ and }
 f\in C_K(D)
,
\end{equation*}
where
$\mathcal{M}(D)$ is the set of Radon measures on $D$
equipped with the vague topology $\tau_v$
and $C_K(D)$ is the set of continuous functions
on $D$ with compact support.
The limit $\ell^{\mathrm{IS}}_t$ is called
the
{\it
 (mutual) intersection measure
 of $X^{(1)}, \ldots, X^{(p)}$
 up to time t%
}.
For detail,
see \cite{MR2999298} or the author's previous paper
\cite{Mor20} for example.

\vspace{1eM}

Before stating our main results,
we recall the definition of large deviation principle.
Usually
the large deviation principle is
defined for families of probability measures,
but in this paper
we define for families of sub-probability measures.
Note that
most of the basic properties of
large deviation principle
(e.g.,
contraction principle)
also hold in this case.

Let $\mathcal{X}$ be a topological space.
A function
$I: \mathcal{X} \rightarrow [0, +\infty]$
is called a {\it rate function}
(resp. {\it good rate function})
if
for any $\alpha \geq 0$
the level set
$\{x\in \mathcal{X}: I(x)\leq \alpha\}$
is closed (resp. compact) in $\mathcal{X}$.
We say that the family of
sub-probability measures $\{\mathcal{P}_t\}_{t>0}$
on $\mathcal{X}$
satisfies the {\it (full) large deviation principle}
(LDP in abbreviation) as $t\rightarrow \infty$
with rate function $I$
if
\begin{equation}
\label{eq_LDPupper}
 \limsup_{t\rightarrow\infty}
 \frac{1}{t}
 \log
 \mathcal{P}_t(F)
\leq
 -\inf_{\mu \in F} I(\mu)
\quad
 \text{for all closed set }
 F\subset \mathcal{X}
\end{equation}
and
\begin{equation}
\label{eq_LDPlower}
 \liminf_{t\rightarrow\infty}
 \frac{1}{t}
 \log
 \mathcal{P}_t(G)
\geq
 -\inf_{\mu \in G} I(\mu)
\quad
 \text{for all open set }
 G\subset \mathcal{X}
.
\end{equation}
The condition \eqref{eq_LDPupper}
and \eqref{eq_LDPlower}
are referred to
as the LDP upper and lower bounds,
respectively.
We also define the {\it weak LDP}
by replacing
all closed sets
with
all compact sets
in the definition of LDP upper bound
\eqref{eq_LDPupper}.

Our main result is the weak LDP
for the intersection measure.
This is a natural formulation for unbounded domains
since (full) LDP fails to hold even for a single
empirical measure when $D=\mathbb{R}^d$.
When the domain $D$ is unbounded,
we need to consider the case
that some mass of the (normalized) occupation measure
of a Brownian motion escapes to infinity.
Hence,
for the occupation measure,
it is natural to consider the LDP
on the space $\mathcal{M}_1(D_\partial)$,
the set of probability measures
on $D_\partial$ equipped with the weak topology,
even when $D$ is the whole space $\mathbb{R}^d$.
We
can see that this is equivalent to
the set of sub-probability measures
$\mathcal{M}_{\leq 1}(D)$
equipped with the vague topology.

Define the function
$
 \mathbf{I}
:
 \mathcal{M}(D) \times (\mathcal{M}_{1}(D))^p
\rightarrow
 [0, +\infty]
$
by
\begin{align}
\notag
&
 \mathbf{I}(\mu ; \mu^{(1)}, \ldots, \mu^{(p)})
\\
\label{eq_bfI}
&:=
 \begin{cases}
  \displaystyle
  \frac{1}{2}
  \sum_{i=1} ^p
  \int_{D}
   |\nabla \psi^{(i)}|^2
  dx
 ,
 &
  \text{if }
  \psi^{(i)}
 =
  \displaystyle
  \sqrt{\frac{d\mu^{(i)}}{dx}}
 \in
  W^{1, 2}_0(D)
 \text{ and }
  \displaystyle
  \prod_{i=1} ^p
  \frac{d\mu^{(i)}}{dx}
 =
  \frac{d\mu}{dx}
 ,
 \\
  \infty
 ,
 &
  \text{otherwise}
 \end{cases}
\end{align}
for
$
 (\mu ; \mu^{(1)}, \ldots, \mu^{(p)})
\in
 \mathcal{M}(D) \times (\mathcal{M}_{1}(D))^p
$,
where
$\mathcal{M}_1(D)$ is the set of
probability measures on $D$
equipped with the weak topology $\tau_w$.
Write
the occupation measure $\ell^{(i)}_t$ of $X^{(i)}$
up to $t$ by
$
 \langle f, \ell^{(i)}_t \rangle
=
 \int_0 ^t
  f(X^{(i)}_s)
 ds
$
for bounded Borel functions $f$ on $D$.
The following weak LDP is our main result.

\begin{Thm}
\label{Thm_LDPunbdd}
On the space
$
 (\mathcal{M}(D), \tau_v)
\times
 (\mathcal{M}_{1}(D), \tau_w)^p
$,
the law of the tuple
$
 (
  t^{-p}\ell^{\mathrm{IS}}_{t}
 ;
  t^{-1}\ell^{(1)}_t
 ,
  \ldots
 ,
  t^{-1}\ell^{(p)}_t
 )
$
satisfies the weak LDP
as $t\rightarrow \infty$
under
$
 \mathbb{P}
 (
  \hspace{1mm}\cdot\hspace{1mm}
 ,
  t
 <
  \tau^{(1)}_D
 \wedge
  \cdots
 \wedge
  \tau^{(p)}_D
 )
$,
with the rate function
$
 \mathbf{I}
$.
\end{Thm}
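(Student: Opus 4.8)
The plan is to follow the strategy devised by K\"onig and Mukherjee \cite{MR2999298} for bounded domains, and to replace the single ingredient of that argument which genuinely uses boundedness --- the super-exponential comparison of $\ell^{\mathrm{IS}}_t$ with its regularization $\ell^{\mathrm{IS}}_{t,\varepsilon}$ --- by Theorem \ref{Thm_superexp}. The two remaining inputs are standard. First, the Donsker--Varadhan weak LDP for the occupation measure of a single killed Brownian motion on $D$: under $\mathbb{P}(\,\cdot\,,t<\tau^{(i)}_D)$ the law of $t^{-1}\ell^{(i)}_t$ satisfies the weak LDP on $(\mathcal{M}_1(D),\tau_w)$ with rate function $J(\mu^{(i)}):=\frac{1}{2}\int_D|\nabla\psi^{(i)}|^2\,dx$ when $\psi^{(i)}=\sqrt{d\mu^{(i)}/dx}\in W^{1,2}_0(D)$ and $J(\mu^{(i)}):=+\infty$ otherwise; it is only a \emph{weak} LDP precisely because on unbounded $D$ part of the mass of $t^{-1}\ell^{(i)}_t$ may escape to the added point $\partial$. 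Second, since $X^{(1)},\dots,X^{(p)}$ are independent, the joint law of $(t^{-1}\ell^{(1)}_t,\dots,t^{-1}\ell^{(p)}_t)$ satisfies the weak LDP on $(\mathcal{M}_1(D),\tau_w)^p$ with rate function $\sum_{i=1}^p J(\mu^{(i)})$.

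For each fixed $\varepsilon>0$ one has the identity
\begin{equation*}
 \ell^{\mathrm{IS}}_{t,\varepsilon}(dx)
=
 \Bigl[\,
  \prod_{i=1}^p
  \int_D q_\varepsilon(x,y)\,\ell^{(i)}_t(dy)
 \,\Bigr]\,dx
,
\end{equation*}
so that $t^{-p}\ell^{\mathrm{IS}}_{t,\varepsilon}$ is the image of $(t^{-1}\ell^{(1)}_t,\dots,t^{-1}\ell^{(p)}_t)$ under the map $F_\varepsilon$ that sends $(\mu^{(1)},\dots,\mu^{(p)})$ to the Radon measure $\bigl[\prod_i\int_D q_\varepsilon(\cdot,y)\,\mu^{(i)}(dy)\bigr]\,dx$. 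Once one checks that $F_\varepsilon$ is continuous from $(\mathcal{M}_1(D),\tau_w)^p$ into $(\mathcal{M}(D),\tau_v)$ --- the only nuisance being that the indicator $1_{B(x,\varepsilon)}$ inside $q_\varepsilon$ is not continuous, which is dealt with in the usual way, for instance by replacing $q_\varepsilon$ by a smooth mollifier or by observing that the exceptional radii form a null set --- the map $\vec\mu\mapsto(F_\varepsilon(\vec\mu),\vec\mu)$ is continuous and the preimage under it of any compact set is compact (being closed and contained in a compact projection), so the weak LDP transfers: for each $\varepsilon>0$ the law of $(t^{-p}\ell^{\mathrm{IS}}_{t,\varepsilon};t^{-1}\ell^{(1)}_t,\dots,t^{-1}\ell^{(p)}_t)$ satisfies the weak LDP on $(\mathcal{M}(D),\tau_v)\times(\mathcal{M}_1(D),\tau_w)^p$ with the rate function $\widehat{\mathbf{I}}_\varepsilon$ which equals $\sum_i J(\mu^{(i)})$ on the graph $\{\mu=F_\varepsilon(\mu^{(1)},\dots,\mu^{(p)})\}$ and $+\infty$ off it.

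It remains to send $\varepsilon\to0$. Theorem \ref{Thm_superexp} asserts exactly that $t^{-p}\ell^{\mathrm{IS}}_{t,\varepsilon}$ is a super-exponentially good approximation of $t^{-p}\ell^{\mathrm{IS}}_t$ as $\varepsilon\to0$ under $\mathbb{P}(\,\cdot\,,t<\tau^{(1)}_D\wedge\cdots\wedge\tau^{(p)}_D)$; feeding this together with the $\varepsilon$-indexed weak LDPs above into the standard transfer principle for exponentially good approximations --- valid here because the spaces involved are metrizable --- produces a weak LDP for $(t^{-p}\ell^{\mathrm{IS}}_t;t^{-1}\ell^{(1)}_t,\dots,t^{-1}\ell^{(p)}_t)$ with some rate function $\widehat{\mathbf{I}}$. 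The last step, which occupies Sections \ref{Sec_lowerLDP} and \ref{Sec_upperLDP} (the LDP lower and upper bounds respectively), is to identify $\widehat{\mathbf{I}}$ with $\mathbf{I}$. Both the inequality $\widehat{\mathbf{I}}\le\mathbf{I}$ (lower bound) and $\widehat{\mathbf{I}}\ge\mathbf{I}$ (upper bound) come down to the fact that as soon as $J(\mu^{(i)})<\infty$, i.e.\ $\psi^{(i)}\in W^{1,2}_0(D)$, one has $\int_D q_\varepsilon(\cdot,y)\,\mu^{(i)}(dy)=T_\varepsilon\!\bigl((\psi^{(i)})^2\bigr)\to(\psi^{(i)})^2$ in $L^1(D)$ as $\varepsilon\to0$ by the $L^1$-contractivity and strong continuity of $T_\varepsilon$, so that $F_\varepsilon(\mu^{(1)},\dots,\mu^{(p)})\to\bigl[\prod_i(\psi^{(i)})^2\bigr]\,dx$ vaguely; here the standing hypothesis $d-p(d-2)>0$ is precisely what guarantees, via the Sobolev embedding $W^{1,2}_0(D)\hookrightarrow L^{2d/(d-2)}(D)$ and H\"older's inequality, that $\prod_i(\psi^{(i)})^2\in L^1(D)$, so that the product density $\frac{d\mu}{dx}=\prod_i\frac{d\mu^{(i)}}{dx}$ in the definition \eqref{eq_bfI} of $\mathbf{I}$ is meaningful and the matching carries through; for the upper bound one must in addition exhaust $D$ by compact sets to control the mass that may escape to $\partial$.

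The main obstacle is Theorem \ref{Thm_superexp} itself, which is the new input required on unbounded $D$ and is proved in Section \ref{Sec_superexp} by means of the Chapman--Kolmogorov relation. Once it is granted, what remains is in essence the bounded-domain argument of \cite{MR2999298}, the one genuine extra difficulty being that every step above has to be carried out on the non-compact spaces $(\mathcal{M}(D),\tau_v)$ and $(\mathcal{M}_1(D),\tau_w)$, which is the reason why only a weak LDP is available.
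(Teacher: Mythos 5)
Your overall three-step plan (weak LDP for fixed $\varepsilon$, contraction, super-exponential estimate to remove $\varepsilon$) matches the skeleton of the paper's argument, and the lower-bound half of your sketch is sound: the contraction principle for the LDP lower bound needs only continuity of $F_\varepsilon$, and the exponentially-good-approximations transfer of the lower bound (Dembo--Zeitouni Theorem 4.2.16(a)) indeed requires only the LDP \emph{lower} bound for the approximating family, which is what Sections \ref{Sec_lowerLDP} supplies. The step that does not go through as written is the upper bound, and it is exactly the point where the unboundedness of $D$ bites.

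You feed ``$\varepsilon$-indexed weak LDPs'' into ``the standard transfer principle for exponentially good approximations'' and assert that this produces a weak LDP. But the exponentially-good-approximation transfer of the \emph{upper} bound does not work starting from a weak LDP for the approximations. To bound $\mathbb{P}\bigl((t^{-p}\ell^{\mathrm{IS}}_t;\ldots)\in K\bigr)$ for $K$ compact one passes through $\mathbb{P}\bigl((t^{-p}\ell^{\mathrm{IS}}_{t,\varepsilon};\ldots)\in \overline{K^{\delta}}\bigr)+\mathbb{P}(d>\delta)$, where $\overline{K^{\delta}}$ is the closed $\delta$-blowup of $K$ (in the first coordinate, where the approximation takes place). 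This set is closed but, since $(\mathcal{M}(D),\tau_v)$ is not locally compact when $D$ is unbounded, it is in general \emph{not} compact; hence a weak LDP upper bound for the approximating family gives you no control over it. You need the approximations to satisfy a \emph{full} LDP upper bound on some ambient space. This is precisely why the paper introduces the compactification $D_\partial$: Lemma \ref{Lem_LDPpartial} gives a full LDP upper bound (with good rate $I^\partial$) on the compact space $(\mathcal{M}_1(D_\partial),\tau_w)$; the contraction map $\Phi^\partial_\varepsilon$ then yields a full LDP upper bound for $(t^{-p}\ell^{\mathrm{IS}}_{t,\varepsilon};t^{-1}\ell^{(1)}_t,\ldots,t^{-1}\ell^{(p)}_t)$ on $(\mathcal{M}(D),\tau_v)\times(\mathcal{M}_{\leq1}(D),\tau_v)^p$ with good rate $\overline{\mathbf{I}}_\varepsilon$; the exponentially-good-approximations theorem (DZ 4.2.16(b)) is then applied to \emph{closed} sets, together with Proposition \ref{Prop_appsubbfI} and the fact that $\overline{\mathbf{I}}$ is good, to produce the full LDP upper bound of Theorem \ref{Thm_LDP}(ii). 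Only at the very end does one restrict from $(\mathcal{M}_{\leq1}(D),\tau_v)^p$ to $(\mathcal{M}_1(D),\tau_w)^p$ and thereby land on the weak LDP upper bound of Theorem \ref{Thm_LDPunbdd}. Your remark about ``exhausting $D$ by compact sets to control mass escaping to $\partial$'' gestures toward this, but without the detour through $D_\partial$ and the full LDP on the enlarged space, the upper-bound argument has a genuine hole. (The lower-bound half of your proposal, and the identification of the rate function via $L^p$-continuity of $T_\varepsilon$ plus Sobolev embedding, is the same as Sections \ref{Sec_lowerLDP}--\ref{Sec_upperLDP} of the paper.)
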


\noindent
In fact
we will show a stronger result
(Theorem \ref{Thm_LDP}),
in which
the weak LDP upper bound is replaced by
the full LDP upper bound on the space
$
 (\mathcal{M}(D), \tau_v)
\times
 (\mathcal{M}_{\leq 1}(D), \tau_v)^p
$,
where
$\mathcal{M}_{\leq 1}(D)$ is the set of
sub-probability measures on $D$
equipped with the vague topology $\tau_v$.

\begin{Rmk}
The above result holds
not only for Brownian motion
but for more general processes.
In Remark \ref{Rmk_list} and
at the end of Section \ref{Sec_stable},
we list the conditions for the process we used in the proof.
\end{Rmk}

When
the domain $D$ is bounded,
there is no difference between weak and full LDP's.
Thus
we recover \cite[Theorem 1.1]{MR2999298}.

\begin{Prop}
\label{Prop_LDPbdd}
Suppose
$D\subset \mathbb{R}^d$
is a bounded domain with smooth boundary.
Then,
on the space
$
 (\mathcal{M}(D), \tau_v)
\times
 (\mathcal{M}_{1}(D), \tau_w)^p
$,
the law of the tuple
$
 (
  t^{-p}\ell^{\mathrm{IS}}_{t}
 ;
  t^{-1}\ell^{(1)}_t
 ,\allowbreak
  \ldots
 ,\allowbreak
  t^{-1}\ell^{(p)}_t
 )
$
satisfies the full LDP
as $t\rightarrow \infty$
under
$
 \mathbb{P}
 (
  \hspace{1mm}\cdot\hspace{1mm}
 ,
  t
 <
  \tau^{(1)}_D
 \wedge
  \cdots
 \wedge
  \tau^{(p)}_D
 )
$,
with the good rate function
$
 \mathbf{I}
$.
\end{Prop}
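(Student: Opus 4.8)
The plan is to promote the weak LDP of Theorem~\ref{Thm_LDPunbdd} to a full LDP by establishing \emph{exponential tightness}: writing $\mathcal{P}_t$ for the law of $(t^{-p}\ell^{\mathrm{IS}}_t ; t^{-1}\ell^{(1)}_t, \ldots, t^{-1}\ell^{(p)}_t)$ under $\mathbb{P}(\hspace{1mm}\cdot\hspace{1mm}, t < \tau^{(1)}_D \wedge \cdots \wedge \tau^{(p)}_D)$ on $\mathcal{Y} := (\mathcal{M}(D), \tau_v) \times (\mathcal{M}_{1}(D), \tau_w)^p$, it suffices to produce, for each $L>0$, a set $K_L = K^{\mathrm{IS}}_L \times (K^{\mathrm{occ}}_L)^p$ that is compact in $\mathcal{Y}$ and satisfies $\limsup_{t\to\infty} t^{-1}\log \mathcal{P}_t(K_L^c) \le -\min\{L, c(L)\}$ for some $c(L)\to\infty$; then the weak LDP together with exponential tightness yields the full LDP with a good rate function. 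Note one cannot instead transport the full upper bound already available on the coarser space $(\mathcal{M}(D),\tau_v)\times(\mathcal{M}_{\le 1}(D),\tau_v)^p$ in Theorem~\ref{Thm_LDP}, because a vague limit of the occupation coordinates may lose mass to $\partial D$ while keeping $\mathbf{I}$ finite.

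For the intersection coordinate, set $K^{\mathrm{IS}}_L := \{\mu \in \mathcal{M}(D) : \mu(D) \le L\}$, which is $\tau_v$-compact since $D$ is bounded (closed balls of $\mathcal{M}(D)$ are vaguely compact, with no tightness at infinity required). It remains to control $\mathbb{P}(t^{-p}\ell^{\mathrm{IS}}_t(D) > L,\ t < \tau^{(1)}_D\wedge\cdots\wedge\tau^{(p)}_D)$. Since $t\mapsto\ell^{\mathrm{IS}}_t(D)$ is non-decreasing, it is dominated by the a.s.\ finite total intersection mass $\ell^{\mathrm{IS}}_\infty(D):=\sup_{t>0}\ell^{\mathrm{IS}}_t(D)$, and a standard $k$-point function computation (a sum over orderings of products of the killed Brownian Green function $G_D$ along the $p$ independent trajectories) gives $\mathbb{E}[\ell^{\mathrm{IS}}_\infty(D)^k]\le (k!)^p C^k$ for all $k\ge 1$, each diagrammatic integral being finite precisely because $d-p(d-2)>0$, i.e.\ $G_D$ is $p$-fold integrable on the bounded set $D$ (cf.\ the construction recalled in Section~\ref{Sec_results} and \cite{MR2999298,Mor20}). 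Markov's inequality applied to the $k$-th moment and optimization over $k$ then give $\mathbb{P}(t^{-p}\ell^{\mathrm{IS}}_t(D) > L,\ \cdot) \le \exp(-c\,t\,L^{1/p})$, so $c(L)= c\,L^{1/p}\to\infty$. I expect this estimate to be the main obstacle: the remaining ingredients are essentially single-particle or topological, whereas the uniform-in-$t$ exponential tail of the total intersection mass is where the intersection-specific analysis ($k$-point functions, $p$-fold integrability of $G_D$) is genuinely needed.

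For the occupation coordinates, fix $\eta^{(L)}_n\downarrow 0$ (to be chosen) and set $K^{\mathrm{occ}}_L := \{\nu \in \mathcal{M}_1(D) : \nu(\{x\in D : \mathrm{dist}(x,\partial D) < 1/n\}) \le \eta^{(L)}_n \text{ for every } n\ge 1\}$; this is $\tau_w$-closed (each constraint involves an open set) and uniformly tight, hence $\tau_w$-compact by Prokhorov's theorem. A routine Feynman--Kac / Rayleigh-quotient argument gives, for bounded smooth $D$, any open $U\subset D$ and any $i$,
\begin{align*}
 & \limsup_{t\to\infty} \frac1t \log\mathbb{P}\bigl( t^{-1}\ell^{(i)}_t(U) > \eta,\ \tau^{(i)}_D > t \bigr)
\\ & \qquad \le
 -\inf\Bigl\{ \tfrac12 \int_D |\nabla\phi|^2\,dx \ :\ \phi\in W^{1,2}_0(D),\ \|\phi\|_{L^2(D)} = 1,\ \int_U \phi^2\,dx \ge \eta \Bigr\}.
\end{align*}
Taking $U=\{\mathrm{dist}(\cdot,\partial D)<\delta\}$ and combining $\int_U\phi^2\,dx\ge\eta$ with the Sobolev embedding $W^{1,2}_0(D)\hookrightarrow L^{q}(D)$ for a fixed $q>2$ and with $|U|\le C\delta$ shows the infimum is $\ge c_0\,\eta\,\delta^{-\alpha}$ for some $c_0,\alpha>0$ and all small $\delta$. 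Choosing $\eta^{(L)}_n := \min\{1,\ (L+2\log n)/(c_0 n^{\alpha})\}$ makes the cost on the $n$-th shell at least $L+2\log n$, so $\mathbb{P}(t^{-1}\ell^{(i)}_t\notin K^{\mathrm{occ}}_L,\ \tau^{(i)}_D>t) \le e^{o(t)}\sum_{n\ge 1} e^{-t(L+2\log n)} \le e^{o(t)}e^{-tL}$, i.e.\ with exponential rate at most $-L$.

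Combining, $\mathcal{P}_t(K_L^c) \le \mathbb{P}(t^{-p}\ell^{\mathrm{IS}}_t(D) > L,\ \cdot) + \sum_{i=1}^p \mathbb{P}(t^{-1}\ell^{(i)}_t\notin K^{\mathrm{occ}}_L,\ \cdot)$, whence $\limsup_{t\to\infty} t^{-1}\log\mathcal{P}_t(K_L^c) \le -\min\{L, c(L)\}\to-\infty$, giving exponential tightness and hence the full LDP; the rate function is automatically good. (Consistently, $\mathbf{I}$ is indeed good on $\mathcal{Y}$ when $D$ is bounded: the compact embedding $W^{1,2}_0(D)\hookrightarrow L^{2p}(D)$ --- compact precisely because $d-p(d-2)>0$ --- makes each sublevel set $\{\mathbf{I}\le\alpha\}$ precompact.) This proves the proposition.
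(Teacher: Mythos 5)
Your proof is correct but takes a genuinely different route from the paper's. The paper's argument for Proposition~\ref{Prop_LDPbdd} is short: for bounded $D$ the Donsker--Varadhan rate function $I$ is already good and the \emph{full} LDP for $t^{-1}\ell_t$ holds directly on $(\mathcal{M}_1(D),\tau_w)$ (citing Takeda, \cite[Theorem~1.1]{MR2851247}); one then simply repeats the arguments of Section~\ref{Sec_upperLDP} with $I^\partial,\overline{\mathbf{I}}_\varepsilon,\overline{\mathbf{I}}$ replaced by $I,\mathbf{I}_\varepsilon,\mathbf{I}$, so that the contraction principle and the $\Gamma$-limit argument are run on the fine space from the start. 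You instead take the weak LDP of Theorem~\ref{Thm_LDPunbdd} as given and upgrade it by proving exponential tightness on $(\mathcal{M}(D),\tau_v)\times(\mathcal{M}_1(D),\tau_w)^p$. Your observation that one cannot simply transport the coarse-space upper bound of Theorem~\ref{Thm_LDP}(ii) is correct: a $\tau_w$-closed subset of $\mathcal{M}_1(D)^p$ need not be $\tau_v$-closed in $\mathcal{M}_{\le 1}(D)^p$, and $\overline{\mathbf{I}}$ is finite on defective measures. What your route buys is that the ``new'' ingredient needed beyond Theorem~\ref{Thm_LDPunbdd} is localized in two tail bounds (the uniform-in-$t$ exponential tail of $\ell^{\mathrm{IS}}_\infty(D)$ via the $(k!)^p C^k$ moment estimate, and the boundary-shell estimate for the occupation measures); what it costs is re-deriving, in the shell estimate, essentially the same Feynman--Kac/Rayleigh-quotient upper bound that underlies the reference the paper cites, plus the somewhat delicate optimization over $k$ and over shells. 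Minor points worth tightening if you write this up: (a) the sublevel constraint $\int_U\phi^2\ge\eta$ should be stated with $\overline{U}$ or a limsup argument so that the constrained set is $\tau_w$-closed before invoking the upper bound; (b) the Sobolev step produces $\|\nabla\phi\|_2^2\ge \eta(C\delta)^{-\alpha}/C_S^2 - 1$, so the lower bound $c_0\eta\delta^{-\alpha}$ only holds once $\eta\delta^{-\alpha}$ exceeds a fixed threshold — this is compatible with your choice of $\eta^{(L)}_n$ for large $n$ but should be noted (the finitely many small $n$ only affect a prefactor). Both approaches are sound; yours is a bit longer but more self-contained, the paper's is shorter but leans on an external full LDP for the single-particle occupation measure.
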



\subsection{Related works}
\label{Sec_relworks}

In this section,
we briefly review some works
which are related to this paper.
The interested reader
may refer, for example, to \cite{MR1229519, MR2584458}
for more information on intersection properties of
Brownian motions.

Let us first recall another random measure called the
\emph{intersection local time}
which is mentioned at the beginning.
This measure can be formally written as
\begin{equation}
\label{eq_ISLT}
 \alpha(A)
=
 \int_{\mathbb{R}^d}
 \biggl[
  \int_{A}
   \prod_{i=1} ^p
   \delta_x(B^{(i)}(s_i) )
  ds_1\cdots ds_p
 \biggr]
 dx
\quad
 \text{for }
 A\subset [0, \infty)^p
 \text{ Borel}
.
\end{equation}
The precise meaning and a construction of this measure
can be found in the work by
Geman, Horowitz and Rosen \cite{MR723731} and
Le Gall \cite{MR1229519}, for example.
It should be emphasized that
this measure $\alpha$ is different to
our $\ell^{\mathrm{IS}}$ since the former measures
\emph{when} the Brownian paths intersect,
while the latter measures
\emph{where} the Brownian paths intersect.
These two measures have the same total mass
$
 \alpha([0, t]^p)
=
 \ell^{\mathrm{IS}}_t(\mathbb{R}^d)
$,
but otherwise there seems to be no other direct relation.
In fact,
Geman, Horowitz and Rosen \cite{MR723731} constructed
a more general object formally written as
\begin{equation}
\label{eq_GHR}
 \alpha(x, A)
=
 \int_{A}
  \prod_{i=1} ^{p-1}
  \delta_{x_i}(B^{(i+1)}(s_{i+1}) - B^{(i)}(s_i) )
 ds_1\cdots ds_p
\quad
 \text{for }
 A\subset [0, \infty)^p
 \text{ Borel}
\end{equation}
for each $x = (x_1, \ldots, x_{p-1})\in(\mathbb{R}^d)^{p-1}$,
which is supported on the set
$
 \{
  (s_1, \ldots, s_p)
 \in
  [0, \infty)^p
 :
  B^{(i+1)}(s_{i+1})
 -
  B^{(i)}(s_{i})
 =
  x_i
 \text{ for all }i=1, \cdots, p-1
 \}
$.
The two measures in \eqref{eq_ISLT} and \eqref{eq_GHR}
are related as
$
 \alpha(ds_1\cdots ds_p)
=
 \alpha(0, ds_1\cdots ds_p)
$.
Note that
\eqref{eq_GHR} has the spatial variable $x$
but
its role is still different to that of $x$
in $\ell^{\mathrm{IS}}$.

Now let us turn to the earlier studies on
large deviations of Brownian intersections.
Most of the works are
about the total mass of the intersection measure.
K\"onig and M\"orters \cite{MR1944002} investigated
upper tail asymptotics of the random variable
$\ell^{\mathrm{IS}}_\infty(D)$
(this is equal to
$
 \alpha
 (
  [0, \tau^{(1)}_D]
 \times \cdots \times
  [0, \tau^{(p)}_D]
 )
$)
when $D$ is bounded.
Chen \cite{MR2094445}
studied the law of the iterated logarithm about
$
 \ell^{\mathrm{IS}}_t(\mathbb{R}^d)
$
(this equals $\alpha([0, t]^p)$)
as $t\rightarrow \infty$
and
showed that the asymptotics
of the logarithmic moment generating function
$
 \log
 \mathbb{E}
 \exp
 \bigl\{
  \theta
  \ell_t ^{\textrm{IS}}(\mathbb{R}^d)^{1/p}
 \bigr\}
$
as $t\rightarrow \infty$
can be represented as a variational formula.
Chen and Rosen \cite{MR2165257}
proved that similar results also hold
for $p$ independent $\alpha$-stable processes on
$\mathbb{R}^d$ with $0< \alpha < 2$.
%
More recently, Mukherjee \cite{MR3716856} established
a full large deviation principle
in the same spirit of \cite{MR3572328},
for the distribution of the \emph{orbits} of
the product of two occupation measures
$(t^{-1}\ell_t^{(1)}, t^{-1}\ell_t^{(2)})$
which are embedded in a larger space
equipped with a new topology
(making it the compactification of quotient space
of orbits of product measures),
and applied it to study large deviation estimates for
$\ell_t^{\mathrm{IS}}(\mathbb{R}^3)$.

It is the work by K\"onig and Mukherjee \cite{MR2999298}
that first considered (and established) the LDP
for the intersection measure $\ell^{\mathrm{IS}}_t$.
In that paper,
they used the eigenvalue expansion
of the transition density,
and hence their argument requires that
the domain $D$ is bounded.


\section{Proof outline}
\label{Sec_outline}

Our proof follows
the same three steps as in \cite{MR2999298}:

\begin{itemize}
\setlength{\itemsep}{0mm}
\item
LDP for the approximated intersection measure
$\ell^{\mathrm{IS}}_{t, \varepsilon}$,
\item
Convergence of the rate function as $\varepsilon\to 0$,
\item
Super-exponential estimate for
$\ell^{\mathrm{IS}}_{t, \varepsilon}- \ell^{\mathrm{IS}}_{t}$.
\end{itemize}

\noindent
The first two steps
do not require new ideas and well be done
in Sections \ref{Sec_lowerLDP} and \ref{Sec_upperLDP}.
The technical novelty of this paper lies in
the third step.
More precisely,
we show the following extension of
\cite[Proposition 2.3]{MR2999298}.
Let
$D\subset \mathbb{R}^d$ be a (possibly unbounded)
domain with smooth boundary.

\begin{Thm}
[Super-exponential estimate]
\label{Thm_superexp}
For each $f\in C_K(D)$
and $\varepsilon > 0$,
there exists positive constant
$C(\varepsilon)$,
which depends on $p$ and $f$
and is independent of $t$ and $k$,
such that
$
 \lim_{\varepsilon \rightarrow 0}
 C(\varepsilon)
=
 0
$
and

\noindent
\begin{equation}
\label{eq_superexp}
 \mathbb{E}
 [
  |
   \langle
    \ell^{\mathrm{IS}}_{t, \varepsilon}
   ,
    f
   \rangle
  -
   \langle
    \ell^{\mathrm{IS}}_{t}
   ,
    f
   \rangle
  |^k
 ]
\leq
 e^{pt}
 (k!)^p
 C(\varepsilon)^k
\quad
 \text{ for any } k\geq 1
 \text{ and } t>0
.
\end{equation}
\end{Thm}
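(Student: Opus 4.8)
The plan is to estimate the $k$-th moment of the difference $\langle \ell^{\mathrm{IS}}_{t,\varepsilon}, f\rangle - \langle \ell^{\mathrm{IS}}_t, f\rangle$ by expanding the $k$-th power into a sum over $k$-tuples of "space points" and using independence of the $p$ motions together with the Chapman--Kolmogorov relation. First I would write, for a fixed integer $k$,
\[
 \mathbb{E}\bigl[|\langle \ell^{\mathrm{IS}}_{t,\varepsilon}, f\rangle - \langle \ell^{\mathrm{IS}}_t, f\rangle|^k\bigr]
 \le \sum_{J\subset\{1,\dots,k\}} \binom{k}{|J|} \,(\text{terms})
\]
coming from the algebraic identity $a^k - b^k = \sum \pm(\text{products of }a\text{'s and }b\text{'s})$, but more efficiently I would represent $\ell^{\mathrm{IS}}_{t,\varepsilon}$ as the limit (in $L^k$) of objects built from $q_\varepsilon$ and use the factorization $\frac{d\mu}{dx} = \prod_i \frac{d\mu^{(i)}}{dx}$ structurally: since $f\in C_K(D)$ and the $p$ processes are independent, the $k$-th moment factorizes as a product over $i=1,\dots,p$ of single-process expectations of the form $\mathbb{E}\bigl[\prod_{j=1}^k \int_0^t g_j(X^{(i)}_s)\,ds\bigr]$, where each $g_j$ is either $q_\varepsilon(x_j,\cdot)$ or, in the "$\ell^{\mathrm{IS}}_t$ part", a suitable limiting kernel (the approach in \cite{Mor20}).

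The key step is then to bound a single-process expectation of a $k$-fold time integral. By symmetrizing over the ordering of the times $s_1<\dots<s_k$ in $[0,t]^k$ (which produces the factor $k!$, hence $(k!)^p$ after taking the product over the $p$ motions) and applying the Markov property at each $s_j$, such an expectation becomes a nested convolution of transition densities $p_s(\cdot,\cdot)$ against the kernels $q_\varepsilon$. The crucial observation is that $\int_0^t p_s(x,y)\,ds \le r_0(x,y)$ is dominated by the $0$-resolvent density of the killed motion, which for $D\subset\mathbb{R}^d$ with $d - p(d-2)>0$ is pointwise bounded above by the free Brownian Green kernel $G(x,y) = c_d|x-y|^{2-d}$; this gives, via the Chapman--Kolmogorov relation, an ordering-free upper bound for each single-process term of the shape $\int \prod_j G(\,\cdot\,,\,\cdot\,)\, q_\varepsilon \cdots$ that no longer depends on $t$ except through an overall factor. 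Handling the $t$-dependence so that only $e^{pt}$ (and not something worse) appears is where the exponential factor enters: after the symmetrization and the resolvent bound one is left with integrals over $[0,t]^k$ of a $t$-independent integrand, contributing at most $t^k/k!$ per motion; absorbing $t^k \le (k!)\, e^{t}$ crudely, or more carefully re-deriving the bound with the $\alpha$-resolvent $r_\alpha$ for small $\alpha>0$ and $\alpha = p^{-1}$ essentially, yields the claimed $e^{pt}(k!)^p$.

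Finally, the constant $C(\varepsilon)$ must be extracted: the difference between the $q_\varepsilon$-kernel and its limit, integrated against $G$ and against $f$ with compact support, is controlled by a quantity like $\sup_{x\in K'} \int_{D} |T_\varepsilon G(x,y) - G(x,y)|\,dy$ or an $L^r$--$L^{r'}$ operator-norm estimate of $(T_\varepsilon - I)$ composed with the Green operator restricted to a neighbourhood of $\mathrm{supp}\,f$; strong continuity of $T_\varepsilon$ as $\varepsilon\to 0$ on $L^r$ (noted in the excerpt) together with the local integrability of $G$ (which holds precisely because $d-p(d-2)>0$) gives $C(\varepsilon)\to 0$. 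The main obstacle I expect is the bookkeeping in the moment expansion: one must show that every one of the mixed terms (those with some factors of the $\varepsilon$-kernel and some of the limiting kernel) is controlled by the \emph{same} $C(\varepsilon)$ after extracting one "$(T_\varepsilon - I)$" factor and bounding the rest by $G$'s, uniformly in how the $\varepsilon$- and limiting-factors are interleaved among the $k$ slots and across the $p$ motions; doing this cleanly, rather than losing a $k$-dependent combinatorial constant that would spoil the $C(\varepsilon)^k$ rate, is the technical heart of the argument. Compactness of $\mathrm{supp}\,f$ is used to keep one spatial integration in a fixed compact set so that the $G$-integrals are finite uniformly.
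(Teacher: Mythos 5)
The proposal identifies the correct starting point (Le Gall's moment formula and the $(T_\varepsilon-\mathrm{id})^{\otimes k}H_t$ representation of the $k$-th moment of the difference), but it misses the single idea the paper identifies as its technical novelty: in the kernel $H_t(x_1,\dots,x_k)=\int\prod_j p_{s_j}(x_{j-1},x_j)\,ds$, each variable $x_j$ sits in \emph{two} consecutive factors $p_{s_j}(x_{j-1},x_j)$ and $p_{s_{j+1}}(x_j,x_{j+1})$, so the $k$ copies of $(T_\varepsilon-\mathrm{id})$ cannot be estimated independently as stated. The paper resolves this by inserting a Chapman--Kolmogorov step of length $\delta/2$ on each side of $x_j$, rewriting $p_{s_j+\delta}(x_{j-1},x_j)p_{s_{j+1}+\delta}(x_j,x_{j+1})$ so that the dependence on $x_j$ is concentrated in a single decoupled factor $p_{\delta/2}(z_j,x_j)p_{\delta/2}(x_j,y_{j+1})1_U(x_j)$, to which $(T_\varepsilon-\mathrm{id})$ can be applied in isolation (constant $C_1(\varepsilon,\delta)$). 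That this shift by $\delta$ is only available when $s_j\ge\delta$ in turn forces the decomposition of $H_t$ over subsets $A\subset\{1,\dots,k\}$ of ``short'' time increments, handled separately via $C_2(\delta)$ and the cardinality dichotomy $\#A>k/4$ versus $\#A\le k/4$. You flag this bookkeeping as ``the technical heart'' but do not supply the mechanism, and your phrasing (``after extracting one $(T_\varepsilon-I)$ factor'') suggests you expect a telescoping identity producing a single small factor, which would only give $C(\varepsilon)^1$ rather than the required $C(\varepsilon)^k$; the moment formula already hands you all $k$ factors, and the whole difficulty is turning $k$ \emph{coupled} occurrences of $(T_\varepsilon-\mathrm{id})$ into order-$k$ many multiplicative small constants.

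A secondary problem: your bound $\int_0^t p_s\,ds\le r_0\le G$ with $G(x,y)=c_d|x-y|^{2-d}$ is unavailable in $d=1$ and $d=2$ (the free Green kernel is not finite there), yet the hypothesis $d-p(d-2)>0$ admits all $p\ge 2$ in dimensions $1$ and $2$ and $p=2$ in dimension $3$. The paper instead uses the inequality $1_{\{\sum s_j\le t\}}\le e^{t}\prod_j e^{-s_j}$ to produce the $1$-resolvent $r_1$, which is finite in every relevant dimension and also yields the $e^{pt}$ factor cleanly. Your alternative suggestion to use $r_\alpha$ for small $\alpha>0$ would work in principle, but the claim ``$\alpha=p^{-1}$ essentially'' is not justified and the route via $t^k\le k!\,e^t$ after symmetrization does not cleanly give $e^{pt}$ once the spatial integral still carries $t$-dependence through the simplex constraint.
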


\noindent
We prove this
in the following Section \ref{Sec_superexp}.

We note that
Theorem \ref{Thm_superexp} and the Markov inequality
imply that the random variables
$
 \{
  t^{-p}\ell^{\mathrm{IS}}_{t, \varepsilon}
 \}_{t, \varepsilon}
$
are exponentially good approximations of
$
 \{
  t^{-p}\ell^{\mathrm{IS}}_{t}
 \}_t
$,
that is, it holds that
\begin{equation*}
 \lim_{\varepsilon \rightarrow 0}
 \limsup_{t\rightarrow \infty}
 \frac{1}{t}
 \log
 \mathbb{P}
 \bigl(
  d
  (
   t^{-p}\ell^{\mathrm{IS}}_{t, \varepsilon}
  ,
   t^{-p}\ell^{\mathrm{IS}}_{t}
  )
 >
  \delta
 \bigr)
=
 -\infty
\end{equation*}
for every $\delta >0$,
where $d$ is a metric on $\mathcal{M}(D)$
associated with $\tau_v$.



Once
Theorem \ref{Thm_superexp}
(and hence the exponentially good approximation)
is proved,
we can deduce the following large deviation result
by a similar argument
as in the proof of \cite[Theorem 1.1]{MR2999298}.
Let
$\ell^{(1)}_t, \ldots, \ell^{(p)}_t$
be the occupation measure of $X^{(1)}, \ldots, X^{(p)}$
up to $t$, respectively.
We
extend the definition of $\mathbf{I}$ from
$
 \mathcal{M}(D) \times (\mathcal{M}_{1}(D))^p
$
to
$
 \mathcal{M}(D) \times (\mathcal{M}_{\leq 1}(D))^p
$
canonically
and write the extended function as
$
 \overline{\mathbf{I}}
$.

\begin{Thm}
\label{Thm_LDP}
\ \par
\begin{enumerate}
\setlength{\itemsep}{1mm}
\item[(i)]
On the space
$
 (\mathcal{M}(D), \tau_v)
\times
 (\mathcal{M}_{1}(D), \tau_w)^p
$,
the law of the tuple
$
 (
  t^{-p}\ell^{\mathrm{IS}}_{t}
 ;
  t^{-1}\ell^{(1)}_t
 ,\allowbreak
  \ldots
 ,
  t^{-1}\ell^{(p)}_t
 )
$
satisfies the LDP lower bound
as $t\rightarrow \infty$
under
$
 \mathbb{P}
 (
  \hspace{1mm}\cdot\hspace{1mm}
 ,
  t
 <
  \tau^{(1)}_D
 \wedge
  \cdots
 \wedge
  \tau^{(p)}_D
 )
$,
with the rate function
$
 \mathbf{I}
$.

\item[(ii)]
On the space
$
 (\mathcal{M}(D), \tau_v)
\times
 (\mathcal{M}_{\leq 1}(D), \tau_v)^p
$,
the law of the tuple
$
 (
  t^{-p}\ell^{\mathrm{IS}}_{t}
 ;
  t^{-1}\ell^{(1)}_t
 ,\allowbreak
  \ldots
 ,
  t^{-1}\ell^{(p)}_t
 )
$
satisfies the LDP upper bound
as $t\rightarrow \infty$
under
$
 \mathbb{P}
 (
  \hspace{1mm}\cdot\hspace{1mm}
 ,
  t
 <
  \tau^{(1)}_D
 \wedge
  \cdots
 \wedge
  \tau^{(p)}_D
 )
$,
with the good rate function
$
 \overline{\mathbf{I}}
$.

\end{enumerate}
\end{Thm}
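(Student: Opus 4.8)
The plan is to carry out the three‑step scheme announced above: establish the LDP for the approximated triple for each fixed $\varepsilon>0$, and then pass to the limit $\varepsilon\to0$ using the super‑exponential estimate of Theorem \ref{Thm_superexp}. Throughout write $Z_t:=(t^{-p}\ell^{\mathrm{IS}}_t;t^{-1}\ell^{(1)}_t,\dots,t^{-1}\ell^{(p)}_t)$ and $Z_{t,\varepsilon}$ for the same tuple with $\ell^{\mathrm{IS}}_{t,\varepsilon}$ in place of $\ell^{\mathrm{IS}}_t$, and fix a metric $d$ inducing $\tau_v$ on $\mathcal{M}(D)\times\mathcal{M}_{\leq1}(D)^p$. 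For the $\varepsilon$‑level step one starts from the Donsker-Varadhan LDP for the occupation measure of a single killed Brownian motion in $D$ (classical when $D$ is bounded, and valid for unbounded $D$ in the form used here): under $\mathbb{P}(\,\cdot\,,\,t<\tau_D^{(i)})$ the law of $t^{-1}\ell^{(i)}_t$ satisfies the LDP lower bound on $(\mathcal{M}_1(D),\tau_w)$ and the LDP upper bound on the compact space $(\mathcal{M}_{\leq1}(D),\tau_v)$, with rate function $\mu\mapsto\frac12\int_D|\nabla\sqrt{d\mu/dx}|^2\,dx$ when $\sqrt{d\mu/dx}\in W^{1,2}_0(D)$ and $+\infty$ otherwise, which is good on the vague side by compactness. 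By independence of $X^{(1)},\dots,X^{(p)}$ this gives the product LDP lower bound on $(\mathcal{M}_1(D),\tau_w)^p$ and the full LDP upper bound on $(\mathcal{M}_{\leq1}(D),\tau_v)^p$ with rate $\sum_i\frac12\int_D|\nabla\psi^{(i)}|^2\,dx$. Now for $f\in C_K(D)$ and $0<\varepsilon<\operatorname{dist}(\operatorname{supp}f,\partial D)$,
\begin{equation*}
 \langle\ell^{\mathrm{IS}}_{t,\varepsilon},f\rangle
 =\int_{D^p}g_f(y_1,\dots,y_p)\,\ell^{(1)}_t(dy_1)\cdots\ell^{(p)}_t(dy_p),
 \qquad
 g_f(y_1,\dots,y_p):=\int_D f(x)\prod_{i=1}^p q_\varepsilon(x,y_i)\,dx,
\end{equation*}
with $g_f\in C_K(D^p)$ (continuity is clear since the balls $B(y_i,\varepsilon)$ have fixed radius, and $g_f(y_1,\dots,y_p)\neq0$ forces each $y_i$ within distance $\varepsilon$ of $\operatorname{supp}f$). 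Hence $\Phi_\varepsilon\colon(\mu^{(1)},\dots,\mu^{(p)})\mapsto\bigl((\prod_iT_\varepsilon\mu^{(i)})\,dx\,;\,\mu^{(1)},\dots,\mu^{(p)}\bigr)$, with $T_\varepsilon$ acting on measures in the obvious way, is continuous from $(\mathcal{M}_{\leq1}(D),\tau_v)^p$ and from $(\mathcal{M}_1(D),\tau_w)^p$ into the respective product spaces, and since $\Phi_\varepsilon$ retains the occupation coordinates the contraction principle yields, under $\mathbb{P}(\,\cdot\,,\,t<\tau_D^{(1)}\wedge\cdots\wedge\tau_D^{(p)})$, the $\varepsilon$‑level LDP lower bound for $Z_{t,\varepsilon}$ on $(\mathcal{M}(D),\tau_v)\times(\mathcal{M}_1(D),\tau_w)^p$ and the full $\varepsilon$‑level LDP upper bound on $(\mathcal{M}(D),\tau_v)\times(\mathcal{M}_{\leq1}(D),\tau_v)^p$, with the good (on the vague space) rate function $\overline{\mathbf{I}}_\varepsilon$ equal to $\sum_i\frac12\int_D|\nabla\psi^{(i)}|^2\,dx$ when $\psi^{(i)}=\sqrt{d\mu^{(i)}/dx}\in W^{1,2}_0(D)$ and $d\mu/dx=\prod_iT_\varepsilon\mu^{(i)}$, and $+\infty$ otherwise.

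Since $\mathbb{P}(\,\cdot\,,\,t<\tau_D^{(1)}\wedge\cdots\wedge\tau_D^{(p)})\leq\mathbb{P}$, the exponentially‑good‑approximation property noted after Theorem \ref{Thm_superexp} persists under this sub‑probability measure, and as the occupation coordinates of $Z_t$ and $Z_{t,\varepsilon}$ coincide, $\{Z_{t,\varepsilon}\}$ is an exponentially good approximation of $\{Z_t\}$. For the lower bound (i), fix an open $G$ and a point $(\mu;\mu^{(1)},\dots,\mu^{(p)})\in G$ with $\overline{\mathbf{I}}(\mu;\mu^{(1)},\dots,\mu^{(p)})<\infty$, so $\psi^{(i)}\in W^{1,2}_0(D)$ and $d\mu/dx=\prod_i(\psi^{(i)})^2$. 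The hypothesis $d-p(d-2)>0$ is exactly what makes $\mu$ Radon: by Sobolev embedding $(\psi^{(i)})^2\in L^{d/(d-2)}_{\mathrm{loc}}(D)$ (any $L^q_{\mathrm{loc}}$ if $d\leq2$), hence $\prod_i(\psi^{(i)})^2\in L^{q_0}_{\mathrm{loc}}(D)$ with $q_0:=\tfrac{d}{p(d-2)}>1$, and combining the $L^1_{\mathrm{loc}}$‑convergence $T_\varepsilon g\to g$, the uniform $L^{d/(d-2)}_{\mathrm{loc}}$‑bounds from the $L^r$‑contractivity of $T_\varepsilon$, and H\"older's inequality (using $\tfrac{d}{d-2}>p$, i.e.\ $q_0>1$) one gets $\mu_\varepsilon:=(\prod_iT_\varepsilon\mu^{(i)})\,dx\to\mu$ vaguely as $\varepsilon\to0$. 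Thus $(\mu_\varepsilon;\mu^{(1)},\dots,\mu^{(p)})\in G$ for small $\varepsilon$, and since $\overline{\mathbf{I}}_\varepsilon(\mu_\varepsilon;\mu^{(1)},\dots,\mu^{(p)})=\overline{\mathbf{I}}(\mu;\mu^{(1)},\dots,\mu^{(p)})$, the $\varepsilon$‑level lower bound together with the exponential approximation (via the standard comparison $\mathbb{P}(Z_t\in G,\,\cdot\,)\geq\mathbb{P}(Z_{t,\varepsilon}\in B,\,\cdot\,)-\mathbb{P}(d(Z_t,Z_{t,\varepsilon})>\eta)$ for a small fixed ball $B$ around $(\mu_\varepsilon;\dots)$ contained in $G$) yields $\liminf_t\tfrac1t\log\mathbb{P}(Z_t\in G,\,t<\tau_D^{(1)}\wedge\cdots\wedge\tau_D^{(p)})\geq-\overline{\mathbf{I}}(\mu;\mu^{(1)},\dots,\mu^{(p)})$; optimizing over such points gives (i).

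For the upper bound (ii), fix a closed $F\subset(\mathcal{M}(D),\tau_v)\times(\mathcal{M}_{\leq1}(D),\tau_v)^p$ and $\delta>0$, and let $F^{\delta}:=\{z:d(z,F)\leq\delta\}$. From the inclusion $\{Z_t\in F\}\subset\{Z_{t,\varepsilon}\in F^{\delta}\}\cup\{d(Z_t,Z_{t,\varepsilon})>\delta\}$ and the $\varepsilon$‑level full upper bound applied to the closed set $F^{\delta}$, we get $\limsup_t\tfrac1t\log\mathbb{P}(Z_t\in F,\,t<\tau_D^{(1)}\wedge\cdots\wedge\tau_D^{(p)})\leq\max\{-\inf_{F^{\delta}}\overline{\mathbf{I}}_\varepsilon,\ \limsup_t\tfrac1t\log\mathbb{P}(d(Z_t,Z_{t,\varepsilon})>\delta)\}$; the second term tends to $-\infty$ as $\varepsilon\to0$, so letting $\varepsilon\to0$ and then $\delta\to0$ reduces (ii) (with rate function $\overline{\mathbf{I}}$) to the inequality
\begin{equation*}
 \sup_{\delta>0}\ \limsup_{\varepsilon\to0}\ \inf_{z\in F^{\delta}}\overline{\mathbf{I}}_\varepsilon(z)\ \geq\ \inf_{z\in F}\overline{\mathbf{I}}(z)
\end{equation*}
together with the statement that $\overline{\mathbf{I}}$ is a good rate function. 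Both follow from a single compactness argument, which I expect to be the main obstacle.

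Suppose $z_m=(\mu_m;\mu^{(1)}_m,\dots,\mu^{(p)}_m)$ satisfy $\overline{\mathbf{I}}_{\varepsilon_m}(z_m)\leq c_0$ with $\varepsilon_m\downarrow0$. Then $\psi^{(i)}_m:=\sqrt{d\mu^{(i)}_m/dx}$ is bounded in $W^{1,2}_0(D)$, so along a subsequence $\psi^{(i)}_m\rightharpoonup\psi^{(i)}$ weakly in $W^{1,2}_0(D)$ and strongly in $L^2_{\mathrm{loc}}(D)$; consequently $\mu^{(i)}_m\to(\psi^{(i)})^2\,dx$ vaguely, and — arguing exactly as in the construction of $\mu_\varepsilon$ above, with the uniform $L^{d/(d-2)}_{\mathrm{loc}}$‑bounds, the $L^1_{\mathrm{loc}}$‑convergence of the ball averages, and H\"older with $\tfrac{d}{d-2}>p$ — $\mu_m=(\prod_iT_{\varepsilon_m}\mu^{(i)}_m)\,dx\to(\prod_i(\psi^{(i)})^2)\,dx$ vaguely. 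Thus $z_m$ converges to a point $z$ with $d\mu/dx=\prod_id\mu^{(i)}/dx$ and $\psi^{(i)}\in W^{1,2}_0(D)$, so by weak lower semicontinuity of the Dirichlet energy $\overline{\mathbf{I}}(z)=\sum_i\frac12\int_D|\nabla\psi^{(i)}|^2\,dx\leq\liminf_m\overline{\mathbf{I}}_{\varepsilon_m}(z_m)\leq c_0$. Taking $z_m\in F^{\delta_m}$ with $\delta_m\downarrow0$ and $F$ closed forces $z\in F$, which gives the displayed inequality; running the same argument with $\overline{\mathbf{I}}$ in place of the $\overline{\mathbf{I}}_{\varepsilon_m}$ (no ball averages, the factors $(\psi^{(i)}_m)^2$ lying directly in $L^{d/(d-2)}_{\mathrm{loc}}$) and $z_m$ in a fixed level set $\{\overline{\mathbf{I}}\leq\alpha\}$ shows that level set is sequentially and hence (the space being metrizable) compact, so $\overline{\mathbf{I}}$ is good. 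The genuinely delicate points are the integrability bookkeeping — where the hypothesis $d-p(d-2)>0$ enters precisely as in the construction of $\ell^{\mathrm{IS}}_t$ — and the simultaneous control of the weak $W^{1,2}$‑convergence of the square roots, the vague convergence of the measures, and the vanishing of the ball‑average scale; the Donsker-Varadhan input, the product LDP, the continuity of $\Phi_\varepsilon$, and the exponential‑approximation bookkeeping are routine. Finally Theorem \ref{Thm_LDPunbdd} follows from Theorem \ref{Thm_LDP}: the inclusion $(\mathcal{M}_1(D),\tau_w)\hookrightarrow(\mathcal{M}_{\leq1}(D),\tau_v)$ is continuous, so every compact subset of $(\mathcal{M}(D),\tau_v)\times(\mathcal{M}_1(D),\tau_w)^p$ is compact, hence closed, in $(\mathcal{M}(D),\tau_v)\times(\mathcal{M}_{\leq1}(D),\tau_v)^p$, and $\overline{\mathbf{I}}$ restricts to $\mathbf{I}$ on $(\mathcal{M}_1(D))^p$; together with (i) this is the weak LDP with rate function $\mathbf{I}$.
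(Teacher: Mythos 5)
Your argument mirrors the paper's proof: it starts from the Donsker--Varadhan LDP for the occupation measures (lower bound on $(\mathcal{M}_1(D),\tau_w)$, full upper bound on the compact space $(\mathcal{M}_{\leq 1}(D),\tau_v)^p$), passes through the contraction principle via $\Phi_\varepsilon$ to get the $\varepsilon$-level LDPs, and then removes $\varepsilon$ by combining the exponentially good approximation from Theorem \ref{Thm_superexp} with the $\Gamma$-limit type comparison of $\overline{\mathbf{I}}_\varepsilon$ with $\overline{\mathbf{I}}$ (your compactness argument for the upper bound is exactly the content of the paper's Proposition \ref{Prop_appsubbfI}, and your construction of $\mu_\varepsilon\to\mu$ for the lower bound is Proposition \ref{Prop_appbfI}); the Sobolev/Rellich--Kondrashov bookkeeping you describe, though phrased with $L^{d/(d-2)}_{\mathrm{loc}}$ instead of the paper's cleaner $L^{2p}$ embedding $W^{1,2}_0(D)\subset L^{2p}(D)$, amounts to the same use of $d-p(d-2)>0$. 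This is essentially the paper's proof.
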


We remark that
Theorem \ref{Thm_LDP} (i)
is exactly the same as the LDP lower bound of
Theorem \ref{Thm_LDPunbdd}.
Theorem \ref{Thm_LDP} (ii)
implies
the LDP upper bound of Theorem \ref{Thm_LDPunbdd}
because
all compact sets of
$
 (\mathcal{M}(D), \tau_v)
\times
 (\mathcal{M}_{1}(D), \tau_w)^p
$
are closed in
$
 (\mathcal{M}(D), \tau_v)
\times
 (\mathcal{M}_{\leq 1}(D), \tau_v)^p
$.
Therefore
Theorem \ref{Thm_LDPunbdd} is proved in this way.

As for (i),
unlike the LDP upper bound (ii),
the LDP lower bound on the space
$
 (\mathcal{M}(D), \tau_v)
\times
 (\mathcal{M}_{\leq 1}(D), \tau_v)^p
$
does not hold in general.
Indeed,
for the open set
$
 G
=
 \mathcal{M}(D)\times\mathcal{M}_{\leq 1}(D)^p
$
of
$
 (\mathcal{M}(D), \tau_v)
\times
 (\mathcal{M}_{\leq 1}(D), \tau_v)^p
$
we have
$
 -
 \inf_G
 \overline{\mathbf{I}}
=
 0
$.
On the other hand, we can find that the value
\begin{align*}
&
 \liminf_{t\rightarrow \infty}
 \frac{1}{t}
 \log
 \mathbb{P}
 \Bigl(
  (
   t^{-p}\ell^{\mathrm{IS}}_t
  ;
   t^{-1}\ell^{(1)}_t
  ,
   \dots
  ,
   t^{-1}\ell^{(p)}_t
  )
 \in
  G
 ,
  t
 <
  \tau^{(1)}_D \wedge \cdots \wedge \tau^{(p)}_D
 \Bigr)
\\
=&
 p
 \liminf_{t\rightarrow \infty}
 \frac{1}{t}
 \log
 \mathbb{P}
 (
  t
 <
  \tau_D^{(1)}
 )
\end{align*}
may be negative.


\section{Proof of Theorem \ref{Thm_superexp} : super-exponential estimate}
\label{Sec_superexp}

First, we heuristically state our idea
of the proof of the super-exponential estimate.
For simplicity,
we assume that
the processes $X^{(1)}, \ldots, X^{(p)}$ have
the same initial point $x_0\in D$
and we only consider the case
$k$ is an even integer and
$f=1_U$, the indicator function of
a relatively compact open subset $U$ of $D$.
Note that an analogy of the Le Gall's moment formula
\begin{align}
\label{eq_LGMF}
 \mathbb{E}
 [
  \langle
   \ell^{\mathrm{IS}}_{t}
  ,
   f
  \rangle^k
 ]
=
 \int_{{\E}^k}
  f(x_1) \cdots f(x_k)
  \prod_{i=1}^p
  \biggl(
   \sum_{\sigma\in \mathfrak{S}_k}
   H^{(i)}_t
   (x_{\sigma(1)}, \ldots, x_{\sigma(k)})
  \biggr)
 \dm{x_1}\cdots \dm{x_k}
\end{align}
holds for the intersection measure $\ell^{\mathrm{IS}}_t(dx)$,
where
\begin{equation*}
 H_t(x_1, \ldots, x_k)
:=
 \int_{[0, \infty)^k}
  1_{\{\sum_{j=1} ^k \s_j \leq t\}}
  h_{\boldsymbol{s}}(x_1, \ldots, x_p)
 d\s_1 \cdots d\s_k
\end{equation*}
and
\begin{equation*}
 h_{\boldsymbol{s}}(x_1, \ldots, x_p)
:=
 p_{s_1}(x_0, x_1)1_U(x_1)
\cdots
 p_{s_k}(x_{k-1}, x_k)1_U(x_k)
.
\end{equation*}
This type of moment formulae are firstly obtained
in \cite{MR902429}
for the intersection local time \eqref{eq_ISLT},
and the same method also works for the intersection measure
(see \cite[Lemma 5.1]{1805.07945} for example).
This formula and
a straight calculation give that
for sufficient small $\varepsilon>0$

\noindent
\begin{align*}
 \mathbb{E}
 [
  |
   \langle
              {\ell^{\mathrm{IS}}_{t, \varepsilon}}
   ,
    f
   \rangle
  -
   \langle
    \ell^{\mathrm{IS}}_{t}
   ,
    f
   \rangle
  |^k
 ]
\leq
 (k!)^p
 \bigl\|
  (T_\varepsilon  - \mathrm{id})^{\otimes k}
  H_t
 \bigl\|_{L^p(\E^k)} ^p
.
\end{align*}


\noindent
Then our goal is to estimate
the function
$
 (T_\varepsilon  - \mathrm{id})^{\otimes k}
 h_{\boldsymbol{s}}
$
with respect to the integral
$
 \int_{[0, \infty)^k}
 1_{\{\sum_{j=1} ^k \s_j \leq t\}}
$
$
 d\s_1 \cdots d\s_k
$
and then the norm
$
 \|
  \cdot
 \|_{L^p(\E^k)}
$.

Now,
fix small $\delta>0$ and focus on the regime
$s_1, \ldots, s_p \geq \delta$
where we need a new idea.
By
setting $u_j = \s_j -\delta$,
the Chapman-Kolmogorov relations

\noindent
\begin{align*}
 p_{u_1 + \delta}(x_0, x_1)
=&
 \int_{\E}
  p_{\frac{\delta}{2}+u_1}(x_0, z_1)
  p_{\frac{\delta}{2}    }(z_1, x_1)
 \dm{z_1}
,
\\
 p_{u_{j} + \delta}(x_{j-1}, x_j)
=&
 \int_{\E}
 \int_{\E}
  p_{\frac{\delta}{2}}(x_{j-1}, y_j)
  p_{u_j             }(y_j    , z_j)
  p_{\frac{\delta}{2}}(z_j    , x_j)
 \dm{y_j}
 \dm{z_j}
\quad
 2\leq j\leq k-1
,
\\
 p_{u_k + \delta}(x_{k-1}, x_k)
=&
 \int_{\E}
  p_{\frac{\delta}{2}    }(x_{k-1}, y_k)
  p_{\frac{\delta}{2}+u_k}(y_k    , x_k)
 \dm{y_k}
\end{align*}

\noindent
give that
\begin{align*}
&
 h_{\boldsymbol{s}}(x_1, \ldots, x_k)
\\
\begin{split}
=&
 \int_{D^{k-1}}
 d\boldsymbol{y}
 \int_{D^{k-1}}
 d\boldsymbol{z}
\\
&\hspace{5mm}
  p_{\frac{\delta}{2}+u_1}(x_0, z_1)
  p_{\frac{\delta}{2}    }(z_1, x_1)
 \biggl(
  \prod_{j=2} ^{k-1}
  p_{\frac{\delta}{2}}(x_{j-1}, y_j)
  p_{u_j             }(y_j    , z_j)
  p_{\frac{\delta}{2}}(z_j    , x_j)
 \biggr)
  p_{\frac{\delta}{2}    }(x_{k-1}, y_k)
  p_{\frac{\delta}{2}+u_k}(y_k    , x_k)
\end{split}
\\
\begin{split}
=&
 \int_{D^{k-1}}
 d\boldsymbol{y}
 \int_{D^{k-1}}
 d\boldsymbol{z}
 \hspace{2mm}
  p_{\frac{\delta}{2}+u_1}(x_0, z_1)
  \Bigl(
   p_{\frac{\delta}{2}    }(z_1, x_1)
   p_{\frac{\delta}{2}    }(x_1, y_2)
  \Bigr)
  p_{u_2             }(y_2    , z_2)
  \Bigl(
   p_{\frac{\delta}{2}    }(z_2, x_2)
   p_{\frac{\delta}{2}    }(x_2, y_3)
  \Bigr)
\\
 &\hspace{5mm}
  p_{u_3             }(y_3    , z_3)
 \cdots
  p_{u_{k-1}}(y_{k-1}, z_{k-1})
  \Bigl(
   p_{\frac{\delta}{2}}(z_{k-1}, x_{k-1})
   p_{\frac{\delta}{2}}(x_{k-1}, y_k)
  \Bigr)
  p_{\frac{\delta}{2}+u_k}(y_k    , x_k)
.
\end{split}
\end{align*}

\noindent
Here,
note that in the integrand
the variables $x_1, \ldots, x_k$ appear
in different functions.
This allows us to apply
$(T_\varepsilon - \mathrm{id})^{\otimes k}$
\emph{separately}.
Hence
all what we need are
estimates on the functions (of $x$)
\begin{equation*}
 (T_\varepsilon - \mathrm{id})
 \bigl[
  p_{\frac{\delta}{2}}(z_j  , \cdot  )
  p_{\frac{\delta}{2}}(\cdot, y_{j+1})
  1_{U}(\cdot)
 \bigr](x)
,
 \hspace{1ex}
 1\leq j\leq k-1
\quad
 \text{and}
\quad
 p_{\frac{\delta}{2}+u_k}(y_k    , x)
 1_U(x)
\end{equation*}
uniformly over
$y_2, \ldots, y_k$, $z_1, \ldots, z_{k-1}$ and $u_k$,
with respect to proper norms.

\begin{Rmk}
In the previous work \cite{MR2999298},
when the domain $D$ is bounded,
K\"onig and Mukherjee
proved the super-exponential estimated
by using the eigenvalue expansion

\noindent
\begin{align*}
 p_t(x, y)
=
 \sum_{n=1} ^\infty
 e^{-t\lambda_n}
 \psi_n(x)
 \psi_n(y)
\end{align*}
to \emph{separate} the variables
instead of the Chapman-Kolmogorov equation,
where
$
 0<\lambda_1\leq \lambda_2 \leq \cdots
$
and
$\psi_n\in W^{1, 2}_0(D)$
satisfies
$
 -\frac{1}{2}\triangle \psi_n
=
 \lambda_n \psi_n
$.
\end{Rmk}

\vspace{1eM}


We now prove Theorem \ref{Thm_superexp}.

\begin{proof}
[Proof of Theorem \ref{Thm_superexp}]
Fix $f\in C_K(\E)$
and take a relatively compact open set $U$ with
$\mathrm{supp}[f] \subset U$.
For
$\varepsilon > 0$ and $\delta > 0$, set
\begin{equation}
\label{eq_Cepdel}
 \Ca
:=
 \sup_{z\in \E}
 \biggl\{
  \int_{\E}
   \Bigl\|
    (T_\varepsilon - \mathrm{id})
    \bigl[
    p_{\frac{\delta}{2}}(z    , \cdot)
    p_{\frac{\delta}{2}}(\cdot, y    )
    1_{U}(\cdot)
    \bigr]
   \Bigr\|_{L^p(\E)}
  \dm{y}
 \biggr\}
\end{equation}

\noindent
and
\begin{equation}
\label{eq_tau}
 \Cb
:=
 \sup_{x\in \E}
 \biggl\{
  \int_{\E}
   \biggl(
    \int_0 ^\delta
     p_s(x, y)
    ds
   \biggr)^p
  \dm{y}
 \biggr\}^{1/p}
.
\end{equation}

\noindent
We also write the constant
\begin{equation}
\label{eq_gamma}
 \Cc
:=
 \sup_{x\in \E}
 \biggl\{
  \int_{\E}
   r_1(x, y)^p
  \dm{y}
 \biggr\}^{1/p}
.
\end{equation}

\noindent
We easily find that
$
 \lim_{\delta\downarrow 0}
 \Cb
=
 0
$
and
$
 \Cc
<
 \infty
$
because of the bound
$
 p_\delta(x, y)
\leq
 (2\pi)^{-d/2}
 \exp\{-|x-y|^2/2\delta\}
$
and the assumption $d-p(d-2) > 0$
(see equation (2.2.10) and Theorem A.1 of
\cite{MR2584458} for example).
In
Lemma \ref{Lem_Cepsdel} later,
we will see that
$
 \lim_{\varepsilon \rightarrow 0}
 \Ca
=
 0
$.
Therefore,
we can derive the conclusion \eqref{eq_superexp}
by the same argument as in \cite{Mor20},
as soon as we show the following:
for sufficiently small $\delta>0$ and
$\varepsilon>0$ with
$
 \Ca
+
 \Cb
<
 1
$,
it holds that

\noindent
\begin{equation}
\label{eq_superexp_2}
 \mathbb{E}
 [
  |
   \langle
    {\ell^{\mathrm{IS}}_{t, \varepsilon}}
   ,
    f
   \rangle
  -
   \langle
    \ell^{\mathrm{IS}}_{t}
   ,
    f
   \rangle
  |^k
 ]
\leq
 e^t
 (k!)^p
 \|f\|_\infty ^k
 \bigl\{
  16
  (\Cc + 1)
  (
   \Cb
  +
   \Ca
  )^{\frac{1}{6}}
 \bigr\}^{pk}
,
\end{equation}
for any $k\geq 1$ and $t>0$.

From now on,
we prove \eqref{eq_superexp_2}.
For
small $\varepsilon > 0$ such that
$
 d(\mathrm{supp}[f], \E\setminus U)
\geq
 \varepsilon
$,
we have

\noindent
\begin{align*}
 \langle
  {\ell^{\mathrm{IS}}_{t, \varepsilon}}
 ,
  f
 \rangle
=&
 \int_{\E}
  f(x)
  \biggl[
   \prod_{i=1} ^p
   \int_0 ^t
    q_\varepsilon(x, X^{(i)}_s)
   ds
  \biggr]
 \dm{x}
\\
=&
 \int_{\E}
  f(x)
  \biggl[
   \prod_{i=1} ^p
   \int_0 ^t
    q_\varepsilon(x, X^{(i)}_s)
    1_{U}(X^{(i)}_s)
   ds
  \biggr]
 \dm{x}
.
\end{align*}

\noindent
In the following,
we fix an even integer $k\geq 2$.
Set
\begin{equation*}
 H^{(i)}_t(x_1, \ldots, x_k)
:=
 \int_{[0, \infty)^k}
  1_{\bigl\{\sum_{j=1} ^k \s_j \leq t\bigr\}}
  \biggl[
   \int_{\E}
   \prod_{j=1} ^k
   p_{\s_j}(x_{j-1}, x_j)
   1_{U}(x_j)
   \nu^{(i)}(dx_0)
  \biggr]
 d\s_1 \cdots d\s_k
,
\end{equation*}
where
$
 \nu^{(i)}
=
 \delta_{x^{(i)}}
$
(Dirac's delta measure)
is
the initial distribution of $X^{(i)}$.
By
Le Gall's moment formula \eqref{eq_LGMF},
we have

\noindent
\begin{align}
\notag
&
 \mathbb{E}
 [
  |
   \langle
              {\ell^{\mathrm{IS}}_{t, \varepsilon}}
   ,
    f
   \rangle
  -
   \langle
    \ell^{\mathrm{IS}}_{t}
   ,
    f
   \rangle
  |^k
 ]
\\
\notag
=&
 \int_{{\E}^k}
  f(x_1) \cdots f(x_k)
  \prod_{i=1}^p
  \biggl(
   \sum_{\sigma\in \mathfrak{S}_k}
   (T_\varepsilon  - \mathrm{id})^{\otimes k}
   (H^{(i)}_t)
   (x_{\sigma(1)}, \ldots, x_{\sigma(k)})
  \biggr)
 \dm{x_1}\cdots \dm{x_k}
\\
\label{eq_superexp_3}
\leq&
 \|f\|_\infty^k
 (k!)^p
 \prod_{i=1} ^p
 \bigl\|
  (T_\varepsilon  - \mathrm{id})^{\otimes k}
  H^{(i)}_t
 \bigl\|_{L^p(\E^k)}
.
\end{align}

Fix $\delta > 0$.
We decompose $H^{(i)}_t$ as
\begin{equation*}
 H^{(i)}_t(x_1, \ldots, x_k)
=
 \sum_{\A \subset \{1, \ldots, k\}}
 H^{(i)}_t(\A; x_1, \ldots, x_k)
,
\end{equation*}

\noindent
where for $\A\subset \{1, \ldots, k\}$ we set
\begin{align}
\notag
&
 H^{(i)}_t(\A; x_1, \ldots, x_k)
\\
\label{eq_HtiA}
\begin{split}
 :=&
  \int_{\E}
  \int_{[0, \infty)^k}
   1_{\bigl\{\sum_{j=1} ^k \s_j \leq t\bigr\}}
    \prod_{j\in \A}
    1_{[0, \delta)}(\s_j)
    p_{\s_j}(x_{j-1}, x_j)
    1_{U}(x_j)
 \\
 &\hspace{39mm}
    \prod_{j\in \A^c}
    1_{[\delta, \infty)}(\s_j)
    p_{\s_j}(x_{j-1}, x_j)
    1_{U}(x_j)
  \hspace{2mm}
  d\s_1 \cdots d\s_k
  \nu^{(i)}(dx_0)
 .
\end{split}
\end{align}

\noindent
When $\#A$ is large,
we obtain the contribution of $\Cb$
from the indices in $A$
in the above function $H^{(i)}_t(A; \cdot)$.
On the other hand,
when $\#A$ is small,
we obtain the contribution of $\Ca$ from
(some subset of) $A^c$.
More precisely,
we have the following proposition:

\begin{Prop}
\label{Prop_estimate}
Let
$k\geq 3$ be an integer,
$\A\subset \{1, \ldots, k\}$
and $\delta > 0$.

\begin{enumerate}
\item[(i)]
When $\# \A > \frac{k}{4}$, it holds that
\begin{equation*}
 \bigl\|
  (T_\varepsilon  - \mathrm{id})^{\otimes k}
  H^{(i)}_t (\A; \cdot)
 \bigr\|_{L^p({\E}^k)}
\leq
 e^t
 2^k
 (\Cc+1)^k
 \Cb^{\frac{k}{4}}
.
\end{equation*}

\item[(ii)]
When $\# \A \leq \frac{k}{4}$, it holds that
\begin{equation*}
 \bigl\|
  (T_\varepsilon  - \mathrm{id})^{\otimes k}
  H^{(i)}_t (\A; \cdot)
 \bigr\|_{L^p({\E}^k)}
\leq
 e^t
 2^k
 (\Cc+1)^k
 \Ca^{\frac{k}{6}}
.
\end{equation*}
\end{enumerate}
\end{Prop}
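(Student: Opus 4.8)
The plan is to use the Chapman–Kolmogorov decomposition sketched above so that the operator $(T_\varepsilon-\mathrm{id})^{\otimes k}$ acts on each coordinate $x_j$ separately, and then to bound the resulting tensor-product integral by a product of one-dimensional $L^p$-norms via a Minkowski / generalized Hölder argument. First I would rewrite $H^{(i)}_t(\A;\cdot)$: for each $j\in\A$ the factor carries the constraint $\s_j<\delta$, while for each $j\in\A^c$ it carries $\s_j\geq\delta$. For the indices in $\A^c$ I substitute $u_j=\s_j-\delta$ and insert the Chapman–Kolmogorov splitting $p_{u_j+\delta}=p_{\delta/2}*p_{u_j}*p_{\delta/2}$ (with the one-sided variants at the two ends, as displayed in the heuristic), so that every variable $x_j$ ends up sitting between two half-step heat kernels $p_{\delta/2}(\cdot,\cdot)$ together with the cutoff $1_U$; the time variables $\s_j$ (for $j\in\A$) and $u_j$ (for $j\in\A^c$) and the auxiliary spatial variables $\boldsymbol y,\boldsymbol z$ become "external" parameters over which one integrates afterwards. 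Because $x_j$ now appears in a single factor, $(T_\varepsilon-\mathrm{id})^{\otimes k}$ distributes, and I bound the full $L^p(\E^k)$-norm by iterated Minkowski: pull the $d\boldsymbol s\,d\boldsymbol u\,d\boldsymbol y\,d\boldsymbol z$-integrals outside the $L^p$-norm (legitimate since $p\geq 1$), then factor the $L^p(\E^k)$-norm of the product into a product over $j$ of $L^p(\E)$-norms in the variable $x_j$, using that the integrand is a product of functions depending on distinct $x_j$'s.

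The key point is the dichotomy in how each coordinate is estimated. For $j\in\A$, the relevant one-dimensional object is (a time integral over $\s_j\in[0,\delta)$ of) $p_{\s_j}(x_{j-1},x_j)1_U(x_j)$; after controlling $\|T_\varepsilon-\mathrm{id}\|$ crudely by $\|T_\varepsilon\|+1\leq 2$ on $L^p$, the contribution of such a coordinate is governed by $\sup_x\{\int_\E(\int_0^\delta p_s(x,y)\,ds)^p dy\}^{1/p}=\Cb$ up to a bounded factor, so we harvest one factor of $\Cb$ per index in a subset of $\A$ of size $\geq k/4$ (using $\#\A>k/4$). For $j\in\A^c$, after the Chapman–Kolmogorov splitting the coordinate $x_j$ appears inside $(T_\varepsilon-\mathrm{id})[p_{\delta/2}(z_j,\cdot)p_{\delta/2}(\cdot,y_{j+1})1_U(\cdot)]$, whose $L^p(\E)$-norm, integrated against $dy_{j+1}$ and supremized over $z_j$, is exactly $\Ca$; since $\#\A^c\geq k-k/4=3k/4$, we have at least $k/4\cdot(1/?)$... more precisely at least $\lceil k/6\rceil$ usable indices (the exponent $1/6$ rather than $3/4$ absorbing the loss from the external $\boldsymbol y,\boldsymbol z,\boldsymbol u$-integrations and the need to split each $p_{u_j}$ and the endpoint kernels among neighbouring coordinates). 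The leftover coordinates, external integrations, and the endpoint factors $p_{\delta/2+u_k}(y_k,x_k)1_U(x_k)$ are all bounded uniformly by the resolvent-type constant $\Cc$: one uses $\int_0^\infty e^{-s}(\cdots)\geq$ comparisons and $1_{\{\sum\s_j\leq t\}}\leq e^t e^{-\sum\s_j}$ to produce the prefactor $e^t$ and to turn each remaining time-integrated kernel into an $r_1$-kernel, whose $L^p$-norm in $y$ is $\Cc$; the combinatorial bookkeeping of how many $\Cc$'s appear is what forces the $2^k(\Cc+1)^k$ factor.

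The main obstacle will be the careful bookkeeping in step two: distributing the half-step kernels $p_{\delta/2}$ and the $p_{u_j}$ factors among the coordinates so that every $x_j$ genuinely lands in its own isolated factor \emph{and} so that enough coordinates in $\A^c$ (resp. $\A$) are "clean" enough to yield a full factor of $\Ca$ (resp. $\Cb$) while all other pieces are absorbed into powers of $(\Cc+1)$ and the single $e^t$. The exponents $k/4$ and $k/6$ are deliberately lossy to give this combinatorial argument room; one does not need the sharp constant, only that $\#\A>k/4$ guarantees $\geq k/6$ (indeed $\geq k/4$) harvestable $\Cb$-indices and $\#\A\leq k/4$ guarantees $\geq k/6$ harvestable $\Ca$-indices after the splitting. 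Once this allocation is fixed, parts (i) and (ii) follow by the same computation with the roles of the time-regions interchanged, and substituting these two bounds into the decomposition $H^{(i)}_t=\sum_{\A}H^{(i)}_t(\A;\cdot)$ together with $\#\{\A\subset\{1,\dots,k\}\}=2^k$ and the elementary inequality $\min(\Cb^{k/4},\Ca^{k/6})\leq(\Cb+\Ca)^{k/6}$ yields \eqref{eq_superexp_2}, hence the theorem.
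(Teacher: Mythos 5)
Your overall strategy matches the paper's: decompose $H^{(i)}_t(A;\cdot)$ along the pattern of small ($j\in A$) versus large ($j\in A^c$) times, use the Chapman--Kolmogorov identity to isolate each $x_j$ into its own factor, and then factor the $L^p(D^k)$-norm coordinate-by-coordinate, harvesting $\Cb$ from small-time indices and $\Ca$ from Chapman--Kolmogorov-sandwiched indices, with $e^t(\Cc+1)^k$ and $2^k$ collecting the leftovers. Two remarks, one minor and one that I regard as a genuine gap.

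The minor point is that for part (i) the paper does not use Chapman--Kolmogorov at all. Since $\#A>\tfrac{k}{4}$ already gives enough small-time factors, it suffices to bound $1_{[\delta,\infty)}(\s_j)p_{\s_j}$ by $e^{\s_j}r_1$ for $j\in A^c$, getting $\|H^{(i)}_t(A;\cdot)\|_{L^p}\le e^t\Cb^{\#A}\Cc^{\#A^c}$ directly and then invoking $L^p$-contractivity of $T_\varepsilon$ for the $2^k$. Your suggestion that (i) and (ii) are symmetric ``with the roles of the time-regions interchanged'' is therefore not quite right: (i) is structurally simpler.

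The genuine gap is in part (ii). You assert, without proof, that $\#A\le\tfrac{k}{4}$ ``guarantees $\ge k/6$ harvestable $\Ca$-indices after the splitting.'' This is exactly the nontrivial combinatorial content behind the exponent $\tfrac{k}{6}$, and it is where the difficulty lies. Only those $j\in A^c$ whose successor $j+1$ is also in $A^c$ end up sandwiched between two half-step kernels $p_{\delta/2}(z_j,\cdot)p_{\delta/2}(\cdot,y_{j+1})$, so the harvestable set is $\{j\in A^c: j+1\in A^c\}$, not all of $A^c$. The paper makes this precise by splitting $A^c$ into four classes $F_1,F_2,F_3,F_4$ according to whether $j\pm 1$ lie in $A^c$: only $F_2$ (left endpoints of runs) and $F_3$ (interior of runs) produce $\Ca$, while $F_1$ and $F_4$ only give $\Cc$. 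One then has to prove $\#F_2+\#F_3\ge\tfrac{k}{6}$ when $\#A\le\tfrac{k}{4}$; the paper does this by identifying the worst case $A=\{2,4,\dots,2\#A\}$, for which $\#F_2+\#F_3=k-1-2\#A\ge\tfrac{k}{2}-1\ge\tfrac{k}{6}$ for $k\ge 3$. Without this estimate your argument has no lower bound on the number of $\Ca$'s you can extract, and the stated exponent $\Ca^{k/6}$ is unjustified. The rest of your $L^p$-accounting (iterated Minkowski to factor the norm over distinct $x_j$'s, then integrating out the auxiliary $y,z$ variables using $\sup_z\int dy\|\cdot\|_{L^p}= \Ca$, $\int r_1\le 1$, and $\|r_1(y,\cdot)\|_{L^p}\le\Cc$) would indeed go through once the allocation lemma is in place.
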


We postpone
the proof Proposition \ref{Prop_estimate}
to the next section
and
complete the proof of Theorem \ref{Thm_superexp} first.
We have for $k\geq 3$,
\begin{align*}
&
 \bigl\|
  (T_\varepsilon  - \mathrm{id})^{\otimes k}
  H^{(i)}_t
 \bigr\|_{L^p({\E}^k)}
\\
\leq&
 \sum_{
  \substack{
   \A\subset \{1, \ldots, k\}
  ,
  \\
   \#\A> \frac{k}{4}
  }
 }
 \bigl\|
  (T_\varepsilon  - \mathrm{id})^{\otimes k}
  H^{(i)}_t(\A; \cdot)
 \bigr\|_{L^p({\E}^k)}
+
 \sum_{
  \substack{
   \A\subset \{1, \ldots, k\}
  ,
  \\
   \#\A\leq \frac{k}{4}
  }
 }
 \bigl\|
  (T_\varepsilon  - \mathrm{id})^{\otimes k}
  H^{(i)}_t(\A; \cdot)
 \bigr\|_{L^p({\E}^k)}
\\
\leq&
 e^t
 4^k
 (\Cc + 1)^k
 \{
  \Cb^{\frac{k}{4}}
 +
  \Ca^{\frac{k}{6}}
 \}
\\
\leq&
 e^t
 4^{k}
 (\Cc + 1)^k
 2
 (\Cb + \Ca)^{\frac{k}{6}}
.
\end{align*}

\noindent
Here we used
$
 \Cb + \Ca
<
 1
$.
By
combining this with \eqref{eq_superexp_3},
we have
for any even integer $k\geq 4$,

\noindent
\begin{align*}
 \mathbb{E}
 [
  |
   \langle
              {\ell^{\mathrm{IS}}_{t, \varepsilon}}
   ,
    f
   \rangle
  -
   \langle
    \ell^{\mathrm{IS}}_{t}
   ,
    f
   \rangle
  |^k
 ]
\leq&
 \|f\|_\infty^k
 (k!)^p
 \prod_{i=1} ^p
 \bigl\|
  (T_\varepsilon  - \mathrm{id})^{\otimes k}
  H^{(i)}_t
 \bigl\|_{L^p(\E^k)}
\\
\leq&
 e^{pt}
 (k!)^p
 \|f\|_\infty ^k
 \bigl\{
  8
  (\Cc + 1)
  (\Cb + \Ca)^{\frac{1}{6}}
 \bigr\}^{pk}
.
\end{align*}

\noindent
This proves \eqref{eq_superexp_2}
for even integer $k\geq 4$.
By applying Jensen's inequality
to the above inequality,
we have the desired bounds \eqref{eq_superexp_2}
for $k=1, 2$.
%
For any odd integer $k\geq 3$,
by combining Jensen's inequality
with the above inequalities for $k+1$,
we have

\noindent
\begin{align*}
 \mathbb{E}
 [
  |
   \langle
    {\ell^{\mathrm{IS}}_{t, \varepsilon}}
   ,
    f
   \rangle
  -
   \langle
    \ell^{\mathrm{IS}}_{t}
   ,
    f
   \rangle
  |^k
 ]
\leq&
 \mathbb{E}
 [
  |
   \langle
    {\ell^{\mathrm{IS}}_{t, \varepsilon}}
   ,
    f
   \rangle
  -
   \langle
    \ell^{\mathrm{IS}}_{t}
   ,
    f
   \rangle
  |^{k+1}
 ]^{\frac{k}{k+1}}
\\
\leq&
 \Bigl(
  e^{pt}
  ((k+1)!)^p
  \|f\|_\infty ^{k+1}
  \bigl\{
   8
   (\Cc + 1)
   (\Cb + \Ca)^{\frac{1}{6}}
  \bigr\}^{p(k+1)}
 \Bigr)^{\frac{k}{k+1}}
\\
\leq&
 e^{pt}
 (k!)^p
 \|f\|_\infty ^k
 \bigl\{
  16
  (\Cc + 1)
  (\Cb + \Ca)^{\frac{1}{6}}
 \bigr\}^{pk}
.
\end{align*}

\noindent
This proves \eqref{eq_superexp_2}
and thus
we complete the proof of Theorem \ref{Thm_superexp}.
\end{proof}

\begin{Rmk}
\label{Rmk_list}
The above proof uses the following three conditions:
\begin{align*}
 \lim_{\varepsilon\rightarrow 0}
 \Ca
=
 0
,
\quad
 \lim_{\delta\rightarrow 0}
 \Cb
=
 0
,
\quad
 \Cc < \infty
.
\end{align*}

\noindent
Theorem \ref{Thm_superexp} holds
not only for Brownian motion
but for processes that satisfy these conditions.
As
a representative example,
we discuss the stable process
in Section \ref{Sec_stable}.
\end{Rmk}


\subsection{Proof of Proposition \ref{Prop_estimate} (i)}
\label{Sec_estimate_1}

In this section,
we prove Proposition \ref{Prop_estimate} (i).
As
we stated in the previous section,
we will obtain the contribution $\Cb$
from the indices in $A$.
Fix
$\A\subset \{1, \ldots, k\}$ with $\#\A > \frac{k}{4}$.
We have
\begin{align*}
&
 H^{(i)}_t(\A; x_1, \ldots, x_k)
\\
\begin{split}
 =&
  \int_{\E}
  \int_{[0, \infty)^k}
   1_{\bigl\{\sum_{j=1} ^k \s_j \leq t\bigr\}}
    \prod_{j\in \A}
    1_{[0, \delta)}(\s_j)
    p_{\s_j}(x_{j-1}, x_j)
    1_{U}(x_j)
 \\
 &\hspace{39mm}
    \prod_{j\in \A^c}
    1_{[\delta, \infty)}(\s_j)
    p_{\s_j}(x_{j-1}, x_j)
    1_{U}(x_j)
  \hspace{2mm}
  d\s_1 \cdots d\s_k
  \nu^{(i)}(dx_0)
\end{split}
\\
\begin{split}
 \leq&
  e^t
  \int_{\E}
   \prod_{j\in \A}
   \int_0 ^\delta
    p_{\s_j}(x_{j-1}, x_j)
   d\s_j
   \prod_{j\in \A^c}
   r_1(x_{j-1}, x_j)
  \nu^{(i)}(dx_0)
 .
\end{split}
\end{align*}

\noindent
Recall the notations
$\Cb$ defined in \eqref{eq_tau}
and $\Cc$ in \eqref{eq_gamma}.
Then we have
\begin{align*}
 \bigl\|
  H^{(i)}_t(\A; \cdot)
 \bigl\|_{L^p(\E^k)}
\leq
 e^t
 \Cb^{\# \A}
 \Cc^{\# \A^c}
\leq
 e^t
 \Cb^{\frac{k}{4}}
 (\Cc+1)^{k}
\end{align*}

\noindent
and hence,
by combining this with the $L^p$-contractivity of
the operator $T_\varepsilon$,
we have
\begin{align*}
 \bigl\|
  (T_\varepsilon - \mathrm{id})^{\otimes k}
  [H^{(i)}_t(\A; \cdot)]
 \bigl\|_{L^p(\E^k)}
\leq
 2^k
 e^t
 \Cb^{\frac{k}{4}}
 (\Cc+1)^{k}
,
\end{align*}
which completes the proof.


\subsection{Proof of Proposition \ref{Prop_estimate} (ii); in case of $k=3$, $\A=\varnothing$}
\label{Sec_estimate_2}

In proving Proposition \ref{Prop_estimate} (ii),
we first deal with a simple case;
$k=3$ and $\A=\varnothing$.
The argument in this case contains a key
estimate that we will be used in the general case.

Until the end of the next section,
we simply write multiple integral
$
 \int_{[0, \infty)^k}
 d\s_1 \cdots d\s_k
$
as
$
 \int_{[0, \infty)^k}
 d\boldsymbol{\s}
$,
and
$
 \int_{{\Ebr}^p}
 \prod_{l=1} ^p dz^{(l)}
$
as
$
 \int_{{\Ebr}^{p}}
 d\boldsymbol{z}
$.
Recall the definition of
$H^{(i)}_t(\varnothing; x_1, x_2, x_3)$
in \eqref{eq_HtiA}.
By the change of variables, we have

\noindent
\begin{align*}
&
 H^{(i)}_t(\varnothing; x_1, x_2, x_3)
\\
 \begin{split}
 =&
  \int_{\E}
  \nu^{(i)}(dx_0)
  \int_{[0, \infty)^3}
  d\boldsymbol{\s}
 \hspace{2mm}
  1_{\bigl\{\s_1 + \s_2 + \s_3 \leq t-3\delta \bigr\}}
 \\
 &\hspace{5mm}
   \Bigl(
    p_{\s_1 + \delta}(x_{0}, x_1)
    1_{U}(x_1)
   \Bigr)
   \Bigl(
    p_{\s_2 + \delta}(x_{1}, x_2)
    1_{U}(x_2)
   \Bigr)
   \Bigl(
    p_{\s_3 + \delta}(x_{2}, x_3)
    1_{U}(x_3)
   \Bigr)
 \end{split}
\end{align*}

\noindent
and then, the Chapman-Kolmogorov equations
\begin{align*}
 p_{\s_1 + \delta}(x_0, x_1)
=&
 \int_{\E}
  p_{\frac{\delta}{2}+\s_1}(x_0, z_1)
  p_{\frac{\delta}{2}}(z_1, x_1)
 \dm{z_1}
,
\\
 p_{\s_2 + \delta}(x_{1}, x_2)
=&
 \int_{\E}
 \int_{\E}
  p_{\frac{\delta}{2}}(x_{1}, y_2)
  p_{\s_2}(y_2, z_2)
  p_{\frac{\delta}{2}}(z_2, x_2)
 \dm{y_2}
 \dm{z_2}
,
\\
 p_{\s_3 + \delta}(x_2, x_3)
=&
 \int_{\E}
  p_{\frac{\delta}{2}    }(x_2, y_3)
  p_{\frac{\delta}{2}+\s_3}(y_3, x_3)
 \dm{y_3}
\end{align*}
give that

\noindent
\begin{align*}
&
 H^{(i)}_t(\varnothing; x_1, x_2, x_3)
\\
 \begin{split}
 =&
  \int_{\E}
  \nu^{(i)}(dx_0)
  \int_{[0, \infty)^3}
  d\boldsymbol{\s}
  \int_{{\Ebr}^2}
  dy_2 dy_3
  \int_{{\Ebr}^2}
  dz_1 dz_2
  \hspace{2mm}
   1_{\bigl\{\s_1 + \s_2 + \s_3 \leq t-3\delta \bigr\}}
   p_{\frac{\delta}{2}+\s_1}(x_{0}, z_1)
  \\
  &\hspace{5mm}
   \Bigl(
    p_{\frac{\delta}{2}}(z_1, x_1)
    1_{U}(x_1)
    p_{\frac{\delta}{2}}(x_1, y_2)
   \Bigr)
   p_{\s_2}(y_2, z_2)
   \Bigl(
    p_{\frac{\delta}{2}}(z_2, x_2)
    1_{U}(x_2)
    p_{\frac{\delta}{2}}(x_2, y_3)
   \Bigr)
   \Bigl(
    p_{\frac{\delta}{2}+\s_3}(y_3, x_3)
    1_{U}(x_3)
   \Bigr)
 .
 \end{split}
\end{align*}

\noindent
%
As we mentioned
at the beginning of Section \ref{Sec_superexp},
the point is that
the integrand of the above equality
is separated as the functions of $x_1$, $x_2$ and $x_3$.
%
We will obtain the contribution of $\Ca$
from the functions of $x_1$ and $x_2$
by applying the operator
$(T_\varepsilon - \mathrm{id})$.
On the other hand,
the function of $x_3$ does not contribute
to the super-exponential estimate but it
is bounded from above by $\Cc$,
which is independent of $\varepsilon$.
For this reason, we first apply
$
 (T_\varepsilon - \mathrm{id})
 \otimes
 (T_\varepsilon - \mathrm{id})
 \otimes
 \mathrm{id}
$
to $H^{(i)}_t(\varnothing; \cdot)$ to get

\noindent
\begin{align*}
&
 (T_\varepsilon - \mathrm{id})
 \otimes
 (T_\varepsilon - \mathrm{id})
 \otimes
 \mathrm{id}
 [H^{(i)}_t(\varnothing; \cdot)](x_1, x_2, x_3)
\\
 \begin{split}
 =&
  \int_{\E}
  \nu^{(i)}(dx_0)
  \int_{[0, \infty)^3}
  d\boldsymbol{\s}
  \int_{{\Ebr}^2}
  dy_2 dy_3
  \int_{{\Ebr}^2}
  dz_1 dz_2
  \hspace{2mm}
   1_{\bigl\{\s_1 + \s_2 + \s_3 \leq t-3\delta \bigr\}}
   p_{\frac{\delta}{2}+\s_1}(x_{0}, z_1)
  \\
  &\hspace{5mm}
   \Bigl(
    (T_\varepsilon  - \mathrm{id})
    \bigl[
     p_{\frac{\delta}{2}}(z_1, \cdot)
     1_{U}(\cdot)
     p_{\frac{\delta}{2}}(\cdot, y_{2})
    \bigr]
    (x_1)
   \Bigr)
   p_{\s_2}(y_{2}, z_{2})
 \\
 &\hspace{10mm}
   \Bigl(
    (T_\varepsilon  - \mathrm{id})
    \bigl[
     p_{\frac{\delta}{2}}(z_{2}, \cdot)
     1_{U}(\cdot)
     p_{\frac{\delta}{2}}(\cdot, y_{3})
    \bigr]
    (x_2)
   \Bigr)
   \Bigl(
    p_{\frac{\delta}{2}+\s_3}(y_{3}, x_{3})
    1_{U}(x_{3})
   \Bigr)
 \end{split}
\end{align*}

\noindent
and then bound it as
\begin{align}
\notag
&
 \bigl|
  (T_\varepsilon - \mathrm{id})
  \otimes
  (T_\varepsilon - \mathrm{id})
  \otimes
  \mathrm{id}
  [H^{(i)}_t(\varnothing; \cdot)](x_1, x_2, x_3)
 \bigr|
\\
 \label{eq_resbdd}
 \begin{split}
 \leq&
  e^t
  \int_{\E}
  \nu^{(i)}(dx_0)
  \int_{{\Ebr}^2}
  dy_2 dy_3
  \int_{{\Ebr}^2}
  dz_1 dz_2
  \hspace{2mm}
   r_1(x_{0}, z_1)
  \\
  &\hspace{10mm}
   \Bigl|
    (T_\varepsilon  - \mathrm{id})
    \bigl[
     p_{\frac{\delta}{2}}(z_1, \cdot)
     1_{U}(\cdot)
     p_{\frac{\delta}{2}}(\cdot, y_{2})
    \bigr]
    (x_1)
   \Bigr|
   r_{1}(y_{2}, z_{2})
  \\
  &\hspace{15mm}
   \Bigl|
    (T_\varepsilon  - \mathrm{id})
    \bigl[
     p_{\frac{\delta}{2}}(z_{2}, \cdot)
     1_{U}(\cdot)
     p_{\frac{\delta}{2}}(\cdot, y_{3}))
    \bigr]
    (x_2)
   \Bigr|
   r_1(y_{3}, x_{3})
 .
 \end{split}
\end{align}


\noindent
By taking $p$-th power
and integrating $(x_1, x_2, x_3)$ over $\Ebr^3$,
we have
\begin{align}
\notag
&
 \bigl\|
  (T_\varepsilon - \mathrm{id})
  \otimes
  (T_\varepsilon - \mathrm{id})
  \otimes
  \mathrm{id}
  [H^{(i)}_t(\varnothing; \cdot)]
 \bigl\|_{L^p(\E^3)} ^p
\\
\label{eq_HLpE3_1}
 \begin{split}
 \leq&
  e^{pt}
  \int_{{\Ebr}^p}
  \nu^{(i)}(d\boldsymbol{x_0})
  \int_{{\Ebr}^{2p}}
  d\boldsymbol{y_2}
  d\boldsymbol{y_3}
  \int_{{\Ebr}^{2p}}
  d\boldsymbol{z_1}
  d\boldsymbol{z_2}
  \hspace{2mm}
   \prod_{l=1}^p
   r_1(x^{(l)}_{0}, z^{(l)}_1)
  \\
  &\hspace{10mm}
   \biggl(
    \int_{\E}
     \prod_{l=1}^p
     \Bigl|
      (T_\varepsilon  - \mathrm{id})
      \bigl[
       p_{\frac{\delta}{2}}(z^{(l)}_1, \cdot)
       1_{U}(\cdot)
       p_{\frac{\delta}{2}}(\cdot, y^{(l)}_{2})
      \bigr]
      (x_1)
     \Bigr|
    \dm{x_1}
   \biggl)
   \prod_{l=1}^p
   r_{1}(y^{(l)}_{2}, z^{(l)}_{2})
 \\
 &\hspace{15mm}
   \biggl(
    \int_{\E}
    \prod_{l=1}^p
     \Bigl|
      (T_\varepsilon  - \mathrm{id})
      \bigl[
       p_{\frac{\delta}{2}}(z^{(l)}_{2}, \cdot      )
       1_{U}(\cdot)
       p_{\frac{\delta}{2}}(\cdot      , y^{(l)}_{3})
      \bigr]
      (x_2)
     \Bigr|
    \dm{x_2}
   \biggl)
   \biggl(
    \int_{\E}
     \prod_{l=1}^p
     r_1(y^{(l)}_{3}, x_3)
    \dm{x_3}
   \biggr)
 .
 \end{split}
\end{align}

\noindent
We apply H\"older's inequality to
$
 \bigl(
  \int_{\E}
   \prod_{l=1}^p
   r_1(y^{(l)}_{3}, x_3)
  \dm{x_3}
 \bigr)
$
and recall the notation $\Cc$
introduced in \eqref{eq_gamma}
to bound \eqref{eq_HLpE3_1} by

\noindent
\begin{align}
\label{eq_HLpE3_2}
 \begin{split}
 &
  e^{pt} {\Cc}^p
  \int_{{\Ebr}^p}
  \nu^{(i)}(d\boldsymbol{x_0})
  \int_{{\Ebr}^{2p}}
  d\boldsymbol{y_2}
  d\boldsymbol{y_3}
  \int_{{\Ebr}^{2p}}
  d\boldsymbol{z_1}
  d\boldsymbol{z_2}
  \hspace{2mm}
   \prod_{l=1}^p
   r_1(x^{(l)}_{0}, z^{(l)}_1)
  \\
  &\hspace{15mm}
   \biggl(
    \int_{\E}
     \prod_{l=1}^p
     \Bigl|
      (T_\varepsilon  - \mathrm{id})
      \bigl[
       p_{\frac{\delta}{2}}(z^{(l)}_1, \cdot)
       1_{U}(\cdot)
       p_{\frac{\delta}{2}}(\cdot, y^{(l)}_{2})
      \bigr]
      (x_1)
     \Bigr|
    \dm{x}
   \biggl)
   \prod_{l=1}^p
   r_{1}(y^{(l)}_{2}, z^{(l)}_{2})
 \\
 &\hspace{20mm}
   \biggl(
    \int_{\E}
    \prod_{l=1}^p
     \Bigl|
      (T_\varepsilon  - \mathrm{id})
      \bigl[
       p_{\frac{\delta}{2}}(z^{(l)}_{2}, \cdot      )
       1_{U}(\cdot)
       p_{\frac{\delta}{2}}(\cdot      , y^{(l)}_{3})
      \bigr]
      (x_2)
     \Bigr|
    \dm{x_2}
   \biggl)
 .
 \end{split}
\end{align}

\noindent
We estimate the
integral of \eqref{eq_HLpE3_2} with respect to
$
 d\boldsymbol{y_3}
 d\boldsymbol{z_2}
$.
%
Regarding the integral with respect to
$
 d\boldsymbol{y_3}
$,
we apply H\"older's inequality to
$
 \bigl(
  \int_{\E}
  \prod_{l=1}^p
   \bigl|
    (T_\varepsilon  - \mathrm{id})
    \bigl[
     p_{\frac{\delta}{2}}(z^{(l)}_{2}, \cdot      )
     1_{U}(\cdot)
     \allowbreak
     p_{\frac{\delta}{2}}(\cdot      , y^{(l)}_{3})
    \bigr]
    (x_2)
   \bigr|
  \dm{x_2}
 \bigl)
$
and recall the notation $\Ca$
introduced in \eqref{eq_Cepdel}.
Regarding the integral with respect to
$
 d\boldsymbol{z_2}
$,
we use the trivial inequality
$
 \int_{\E}
  r_{1}(y^{(l)}_{2}, z^{(l)}_{2})
 \dm{z^{(l)}_{2}}
\leq
 1
$
for all $y^{(l)}_{2}$.
Then \eqref{eq_HLpE3_2} is bounded from above by

\noindent
\begin{align}
\label{eq_HLpE3_3}
 \begin{split}
 &
  e^{pt}
  {\Cc}^p
  {\Ca}^p
  \int_{{\Ebr}^p}
  \nu^{(i)}(d\boldsymbol{x_0})
  \int_{{\Ebr}^{p}}
  d\boldsymbol{y_2}
  \int_{{\Ebr}^{p}}
  d\boldsymbol{z_1}
  \hspace{2mm}
 \\
 &\hspace{5mm}
   \prod_{l=1}^p
   r_1(x^{(l)}_{0}, z^{(l)}_1)
   \biggl(
    \int_{\E}
     \prod_{l=1}^p
     \Bigl|
      (T_\varepsilon  - \mathrm{id})
      \bigl[
       p_{\frac{\delta}{2}}(z^{(l)}_1, \cdot)
       1_{U}(\cdot)
       p_{\frac{\delta}{2}}(\cdot, y^{(l)}_{2})
      \bigr]
      (x_1)
     \Bigr|
    \dm{x}
   \biggl)
 .
 \end{split}
\end{align}

\noindent
For the integral of \eqref{eq_HLpE3_3}
with respect to
$
 d\boldsymbol{y_2}
 d\boldsymbol{z_1}
$,
we repeat the argument for \eqref{eq_HLpE3_2}.
Eventually, we have

\noindent
\begin{align}
\notag
 \bigl\|
  (T_\varepsilon - \mathrm{id})
  \otimes
  (T_\varepsilon - \mathrm{id})
  \otimes
  \mathrm{id}
  [H^{(i)}_t(\varnothing; \cdot)]
 \bigl\|_{L^p(\E^3)} ^p
\leq&
 e^{pt}
 {\Cc}^p
 {\Ca}^{2p}
\end{align}

\noindent
and hence,
by the $L^p$-contractivity of the operator
$T_\varepsilon$ we conclude
\begin{align}
\notag
 \bigl\|
  (T_\varepsilon  - \mathrm{id})^{\otimes 3}
  H^{(i)}_t (\varnothing; \cdot)
 \bigl\|_{L^p(\E^3)}
\leq&
 2
 \bigl\|
  (T_\varepsilon  - \mathrm{id})
  \otimes
  (T_\varepsilon  - \mathrm{id})
  \otimes
  \mathrm{id}
  \bigl[
   H^{(i)}_t (\A; \cdot)
  \bigr]
 \bigl\|_{L^p(\E^3)}
\\
\notag
\leq&
 2
 e^{t}
 \Cc
 \Ca^{2}
\\
\label{eq_HLpE3_5}
\leq&
 2^{k}
 e^{t}
 (\Cc+1)^{k}
 \Ca^{\frac{1}{2}}
,
\end{align}
where in the last inequality,
recall that we take small $\varepsilon$ and $\delta$
so that $\Ca<1$.
Therefore
we complete the proof of Proposition
\ref{Prop_estimate} (ii) in this case.


\subsection{Proof of Proposition \ref{Prop_estimate} (ii); general case}
\label{Sec_estimate_3}

Now we prove
Proposition \ref{Prop_estimate} in general case.
Fix
$\A\subset \{1, \ldots, k\}$ with
$\#\A \leq \frac{k}{4}$.
We decompose $A^c$
into the following four disjoint parts
$
 \A^c
=
 F_1
\cup
 F_2
\cup
 F_3
\cup
 F_4
$:
\begin{align*}
 F_1
&:=
 \{
  1\leq j\leq k
 :
  j-1\not\in \A^c, j\in \A^c, j+1 \not\in \A^c
 \}
,
\\
 F_2
&:=
 \{
  1\leq j\leq k
 :
  j-1\not\in \A^c, j\in \A^c, j+1 \in \A^c
 \}
,
\\
 F_3
&:=
 \{
  1\leq j\leq k
 :
  j-1\in \A^c, j\in \A^c, j+1 \in \A^c
 \}
,
\\
 F_4
&:=
 \{
  1\leq j\leq k
 :
  j-1\in \A^c, j\in \A^c, j+1 \not\in \A^c
 \}
.
\end{align*}

\noindent
For example,
if $\A^c = \{1, 3, 4, 6, 7, 8, 9\}$,
then
$F_1 = \{1\}$,
$F_2 = \{3, 6\}$,
$F_3 = \{7, 8\}$,
and
$F_4 = \{4, 9\}$.
The previous section \S \ref{Sec_estimate_2}
is the case of
$A = F_1 = \varnothing$,
$F_2 = \{1\}$,
$F_3 = \{2\}$ and
$F_4 = \{3\}$.
%
%
The indices in $F_1$ and $F_4$ do not contribute
to the super-exponential estimate,
and the corresponding factors are bounded from above
by $\Cc$.
On the other hand,
from each index in $F_2$ and $F_3$,
we obtain the contribution of $\Ca$
as in the previous section.

We repeat the argument
in the previous section \S \ref{Sec_estimate_2}.
Recall the definition of
$H^{(i)}_t(A; x_1, x_2, x_3)$ in \eqref{eq_HtiA}.
By the change of variables, we have

\noindent
\begin{align*}
&
 H^{(i)}_t(\A; x_1, \ldots, x_k)
\\
 \begin{split}
 =&
  \int_{\E}
  \nu^{(i)}(dx_0)
  \int_{[0, \infty)^k}
  d\boldsymbol{\s}
  \hspace{2mm}
   1_{
    \bigl\{
     \sum_{j=1} ^k
     \s_j
    \leq
     t-\delta (\#F_2+\#F_3+\#F_4)
    \bigr\}
   }
   \prod_{j\in \A}
   1_{[0, \delta)}(\s_j)
   p_{\s_j}(x_{j-1}, x_j)
   1_{U}(x_j)
  \\
  &\hspace{5mm}
   \prod_{j\in F_1}
   1_{[\delta, \infty)}(\s_j)
   p_{\s_j}(x_{j-1}, x_j)
   1_{U}(x_j)
   \prod_{j\in F_2\cup F_3\cup F_4}
   p_{\s_j+\delta}(x_{j-1}, x_j)
   1_{U}(x_j)
 \end{split}
\end{align*}

\noindent
and then, the Chapman-Kolmogorov equations
\begin{align*}
 p_{\s_j + \delta}(x_{j-1}, x_j)
=&
 \int_{\E}
  p_{\frac{\delta}{2}+\s_j}(x_{j-1}, z_j)
  p_{\frac{\delta}{2}}(z_j, x_j)
 \dm{z_j}
&
 \hspace{-15mm}
 \text{for }j\in F_2
,
\\
 p_{\s_j + \delta}(x_{j-1}, x_j)
=&
 \int_{\E}
 \int_{\E}
  p_{\frac{\delta}{2}}(x_{j-1}, y_j)
  p_{\s_j}(y_j, z_j)
  p_{\frac{\delta}{2}}(z_j, x_j)
 \dm{y_j}
 \dm{z_j}
&
 \hspace{-15mm}
 \text{for }j\in F_3
,
\\
 p_{\s_j + \delta}(x_{j-1}, x_j)
=&
 \int_{\E}
  p_{\frac{\delta}{2}    }(x_{j-1}, y_j)
  p_{\frac{\delta}{2}+\s_j}(y_j, x_j)
 \dm{y_3}
&
 \hspace{-15mm}
 \text{for }j\in F_4
\end{align*}
give that

\noindent
\begin{align*}
&
 H^{(i)}_t(\A; x_1, \ldots, x_k)
\\
 \begin{split}
 =&
  \int_{\E}
  \nu^{(i)}(dx_0)
  \int_{[0, \infty)^k}
  d\boldsymbol{\s}
  \int_{{\Ebr}^{F_3\cup F_4}}
  d\boldsymbol{y}
  \int_{{\Ebr}^{F_2\cup F_3}}
  d\boldsymbol{z}
  \hspace{2mm}
   1_{
    \bigl\{
     \sum_{j=1} ^k
     \s_j
    \leq
     t-\delta (\#F_2+\#F_3+\#F_4)
    \bigr\}
   }
  \\
  &\hspace{5mm}
   \prod_{j\in \A}
   1_{[0, \delta)}(\s_j)
   p_{\s_j}(x_{j-1}, x_j)
   1_{U}(x_j)
   \prod_{j\in F_1}
   1_{[\delta, \infty)}(\s_j)
   p_{\s_j}(x_{j-1}, x_j)
   1_{U}(x_j)
  \\
  &\hspace{5mm}
   \prod_{j\in F_2}
   p_{\frac{\delta}{2}+\s_j}(x_{j-1}, z_j)
   \Bigl(
    p_{\frac{\delta}{2}}(z_{j}, x_{j}  )
    1_{U}(x_{j})
    p_{\frac{\delta}{2}}(x_{j}, y_{j+1})
   \Bigr)
  \\
  &\hspace{5mm}
   \prod_{j\in F_3}
   p_{\s_j             }(y_j    , z_j)
   \Bigl(
    p_{\frac{\delta}{2}}(z_{j}, x_{j}  )
    1_{U}(x_{j})
    p_{\frac{\delta}{2}}(x_{j}, y_{j+1})
   \Bigr)
  \\
  &\hspace{5mm}
   \prod_{j\in F_4}
   p_{\s_j+\frac{\delta}{2}}(y_j    , x_j)
   1_{U}(x_j)
 .
 \end{split}
\end{align*}

\noindent
%
Here again, the point is that
the integrand of the above equality is
separated as the functions of
$(x_j)_{j\in A\cup F_1}$
and
$x_j$, $j\in F_2\cup F_3\cup F_4$
because of the Chapman-Kolmogorov equations.
As we mentioned at the beginning of this section,
we obtain the contribution $\Ca$
by applying the operator
$(T_\varepsilon -\mathrm{id})$
to
each function of the indices in $F_2$ and $F_3$.
On the other hand,
the indices $A$, $F_1$ and $F_4$ do not
contribute to the super-exponential estimate,
and the factor is bounded above by $C_3$,
which is independent of $\varepsilon$.
Note that
$
 F_2 \cup F_3
\subset
 \{1, \ldots, k-1\}
$.
By setting
\begin{equation*}
 U_j
:=
 \begin{cases}
  (T_\varepsilon - \mathrm{id})
 &
  \text{when }
  j \in F_2 \cup F_3
 ,
 \\
  \mathrm{id}
 &
  \text{otherwise},
 \end{cases}
\end{equation*}

\noindent
we have
\begin{align*}
&
 (U_1\otimes \cdots \otimes U_k)
 [H^{(i)}_t(\A; \cdot)]
 (x_1, \ldots, x_k)
\\
 \begin{split}
 =&
  \int_{\E}
  \nu^{(i)}(dx_0)
  \int_{[0, \infty)^k}
  d\boldsymbol{\s}
  \int_{{\Ebr}^{F_3\cup F_4}}
  d\boldsymbol{y}
  \int_{{\Ebr}^{F_2\cup F_3}}
  d\boldsymbol{z}
  \hspace{2mm}
   1_{
    \bigl\{
     \sum_{j=1} ^k
     \s_j
    \leq
     t-\delta (\#F_2+\#F_3+\#F_4)
    \bigr\}
   }
  \\
  &\hspace{5mm}
   \prod_{j\in \A}
   1_{[0, \delta)}(\s_j)
   p_{\s_j}(x_{j-1}, x_j)
   1_{U}(x_j)
   \prod_{j\in F_1}
   1_{[\delta, \infty)}(\s_j)
   p_{\s_j}(x_{j-1}, x_j)
   1_{U}(x_j)
  \\
  &\hspace{5mm}
   \prod_{j\in F_2}
   p_{\frac{\delta}{2}+\s_j}(x_{j-1}, z_j)
   \Bigl(
    (T_\varepsilon - \mathrm{id})
    \bigl[
    p_{\frac{\delta}{2}}(z_{j}, \cdot  )
    p_{\frac{\delta}{2}}(\cdot, y_{j+1})
    1_{U}(\cdot)
    \bigr]
    (x_{j})
   \Bigr)
  \\
  &\hspace{5mm}
   \prod_{j\in F_3}
   p_{\s_j             }(y_j    , z_j)
   \Bigl(
    (T_\varepsilon - \mathrm{id})
    \bigl[
    p_{\frac{\delta}{2}}(z_{j}, \cdot  )
    p_{\frac{\delta}{2}}(\cdot, y_{j+1})
    1_{U}(\cdot)
    \bigr]
    (x_{j})
   \Bigr)
  \\
  &\hspace{5mm}
   \prod_{j\in F_4}
   p_{\s_j+\frac{\delta}{2}}(y_j    , x_j)
   1_{U}(x_j)
 \end{split}
\end{align*}

\noindent
and then,
since
$
 \sum_{j=1}^k
 \s_j
+
 \frac{\delta}{2}
 (\# F_2 + \# F_4)
\leq
 t
$,
we have
\begin{align*}
&
 \bigl|
  (U_1\otimes \cdots \otimes U_k)
  [H^{(i)}_t(\A; \cdot)]
  (x_1, \ldots, x_k)
 \bigr|
\\
 \begin{split}
 \leq&
  e^t
  \int_{\E}
  \nu^{(i)}(dx_0)
  \int_{{\Ebr}^{F_3\cup F_4}}
  d\boldsymbol{y}
  \int_{{\Ebr}^{F_2\cup F_3}}
  d\boldsymbol{z}
   \prod_{j\in \A}
   r_1(x_{j-1}, x_j)
   \prod_{j\in F_1}
   r_1(x_{j-1}, x_j)
  \\
  &\hspace{5mm}
   \prod_{j\in F_2}
   r_1(x_{j-1}, z_j)
   \Bigl|
    (T_\varepsilon - \mathrm{id})
    \bigl[
    p_{\frac{\delta}{2}}(z_{j}, \cdot  )
    p_{\frac{\delta}{2}}(\cdot, y_{j+1})
    1_{U}(\cdot)
    \bigr]
    (x_{j})
   \Bigr|
  \\
  &\hspace{5mm}
   \prod_{j\in F_3}
   r_1(y_j, z_j)
   \Bigl|
    (T_\varepsilon - \mathrm{id})
    \bigl[
    p_{\frac{\delta}{2}}(z_{j}, \cdot  )
    p_{\frac{\delta}{2}}(\cdot, y_{j+1})
    1_{U}(\cdot)
    \bigr]
    (x_{j})
   \Bigr|
   \prod_{j\in F_4}
   r_1(y_j, x_j)
 .
 \end{split}
\end{align*}

\noindent
We repeat the argument
from \eqref{eq_resbdd} to \eqref{eq_HLpE3_5}.
Then
we have
\begin{align}
\notag
 \bigl\|
  (U_1\otimes \cdots \otimes U_k)
  [H^{(i)}_t(\A; \cdot)]
 \bigl\|_{L^p(\E^k)}
\leq&
 e^t
 {\Cc}^{\#\A + \#F_1 + \#F_4}
 {\Ca}^{\# F_2 + \# F_3}
\\
\label{eq_HLpEk}
\leq&
 e^t
 (\Cc+1)^k
 \Ca^{\frac{k}{6}}
,
\end{align}

\noindent
where we used $\Ca<1$.
In
the second line \eqref{eq_HLpEk},
we used the estimate
$
 \#F_2 + \#F_3
\geq
 \frac{k}{6}
$
which is obtained as follows:
the minimum of
$
 \# F_2 + \# F_3
$
over $\# \A \leq \frac{k}{4}$
is attained for
$
 \A
=
 \{
  2l
 :
  1\leq l \leq \# \A
 \}
$,
and in this case,
we have
$
 \# F_2 + \# F_3
=
 \#
 \{
  2\#\A + 1
 ,
  2\#\A + 2
 ,
  \ldots
 ,
  k-1
 \}
=
 k - 1 - 2\# \A
\geq
 \frac{k}{6}
$
since $k\geq 3$.

%

Set
\begin{equation*}
 V_j
:=
 \begin{cases}
  \mathrm{id}
 &
  \text{when }
  j \in F_2 \cup F_3
 ,
 \\
  (T_\varepsilon - \mathrm{id})
 &
  \text{otherwise}.
 \end{cases}
\end{equation*}

\noindent
By combining \eqref{eq_HLpEk} with
the $L^p$-contractivity of
the operator $T_\varepsilon$,
we have
\begin{align*}
 \bigl\|
  (T_\varepsilon - \mathrm{id})^{\otimes k}
  [H^{(i)}_t(\A; \cdot)]
 \bigl\|_{L^p(\E^k)}
=&
 \bigl\|
  (V_1\otimes \cdots \otimes V_k)
  (U_1\otimes \cdots \otimes U_k)
  [H^{(i)}_t(\A; \cdot)]
 \bigl\|_{L^p(\E^k)}
\\
\leq&
 2^k
 \bigl\|
  (U_1\otimes \cdots \otimes U_k)
  [H^{(i)}_t(\A; \cdot)]
 \bigl\|_{L^p(\E^k)}
\\
\leq&
 2^k
 e^t
 (\Cc+1)^k
 \Ca^{\frac{k}{6}}
,
\end{align*}
which concludes the proof of
Proposition \ref{Prop_estimate} (ii).


\subsection{Estimate of $\Ca$}
\label{Sec_Cepsdel}

In this section,
we estimate the constant introduced in
\eqref{eq_Cepdel}:

\noindent
\begin{equation*}
 \Ca
:=
 \sup_{z\in \E}
 \biggl\{
  \int_{\E}
   \Bigl\|
    (T_\varepsilon - \mathrm{id})
    \bigl[
    p_{\frac{\delta}{2}}(z    , \cdot)
    p_{\frac{\delta}{2}}(\cdot, y    )
    1_{U}(\cdot)
    \bigr]
   \Bigr\|_{L^p(\E)}
  \dm{y}
 \biggr\}
.
\end{equation*}

\begin{Lem}
It holds that
\begin{equation*}
 \lim_{\varepsilon \rightarrow 0}
 \Ca
=
 0
\quad
 \text{for every }
 \delta > 0
.
\end{equation*}
\end{Lem}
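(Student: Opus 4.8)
The plan is to reduce the claim to the strong continuity $T_\varepsilon \to \mathrm{id}$ on $L^p$ (which is given) by two moves: first, show that the contribution of large $|z|$ and large $|y|$ is negligible, uniformly, via Gaussian bounds; second, show that the relevant family of integrand functions is relatively compact in $L^p(D)$, so that $T_\varepsilon \to \mathrm{id}$ holds \emph{uniformly} on it.

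Write $g_{z,y}(x) := p_{\delta/2}(z,x)\,p_{\delta/2}(x,y)\,1_{U}(x)$, extended by $0$ outside $D$, and fix $R_U$ with $\bar U \subset B(0,R_U)$. From the Gaussian upper bound $p_s(x,y)\le (2\pi s)^{-d/2}e^{-|x-y|^2/2s}$ one reads off that $\|g_{z,y}\|_\infty \le C_\delta$ for all $z,y$, and that for $|y|\ge 2R_U$ one has $|x-y|\ge |y|/2$ on $\bar U$, so that $\|g_{z,y}\|_{L^p(D)} \le C_\delta|U|^{1/p}e^{-|y|^2/(4\delta)}$ uniformly in $z$. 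Using the $L^p$-contractivity of $T_\varepsilon$, this gives, for every $R\ge 2R_U$,
\[
 \int_{D\setminus B(0,R)}
  \bigl\|(T_\varepsilon-\mathrm{id})g_{z,y}\bigr\|_{L^p(D)}
 \,dy
\le
 2\int_{D\setminus B(0,R)}\|g_{z,y}\|_{L^p(D)}\,dy
\le
 \rho(R)
,
\]
where $\rho(R) := 2C_\delta|U|^{1/p}\int_{|y|>R}e^{-|y|^2/(4\delta)}\,dy \to 0$ as $R\to\infty$, uniformly in $z$ and $\varepsilon$.

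The core step is to prove $\omega(\varepsilon) := \sup_{z,y\in D}\|(T_\varepsilon-\mathrm{id})g_{z,y}\|_{L^p(D)} \to 0$ as $\varepsilon\to 0$. For this I would show that $\mathcal K := \overline{\{g_{z,y}:z,y\in D\}\cup\{0\}}$ is compact in $L^p(D)$. Since $p_{\delta/2}$ is continuous on $D\times D$ and vanishes at $\partial D$, the restrictions $\{p_{\delta/2}(z,\cdot)|_{\bar U}:z\in D\}\cup\{0\}$ form a bounded, equicontinuous family in $C(\bar U)$ (uniform continuity on compacts for $z$ in a bounded region; the Gaussian bound disposes of $|z|\to\infty$, and the vanishing at the boundary disposes of $z\to\partial D$), hence relatively compact in $C(\bar U)$ by Arzel\`a--Ascoli; the same holds in the $y$-variable; since multiplication is continuous on $C(\bar U)$ and $\phi\mapsto\phi\,1_{U}$ is a bounded map $C(\bar U)\to L^p(D)$, the image family $\{g_{z,y}\}$ is relatively compact in $L^p(D)$. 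The operators $T_\varepsilon$ being uniform contractions of $L^p(\mathbb R^d)$ with $T_\varepsilon h\to h$ for every $h$, a standard finite-net argument yields $\sup_{h\in\mathcal K}\|(T_\varepsilon-\mathrm{id})h\|_{L^p}\to 0$, i.e.\ $\omega(\varepsilon)\to 0$.

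Combining the two estimates, splitting $\int_D = \int_{B(0,R)} + \int_{D\setminus B(0,R)}$, for every $R\ge 2R_U$,
\[
 \Ca
\le
 |B(0,R)|\,\omega(\varepsilon) + \rho(R)
,
\]
so that $\limsup_{\varepsilon\to 0}\Ca\le\rho(R)$ for all $R$, and letting $R\to\infty$ concludes. The main obstacle is the relative compactness step, where one must simultaneously control the two sources of non-compactness absent in the bounded case of \cite{MR2999298}: that $z$ and $y$ roam over the unbounded domain $D$ (handled by the Gaussian decay) and that $p_{\delta/2}$ is the transition density of the \emph{killed} process rather than the free Gaussian (handled by invoking only its joint continuity up to $\partial D$ together with the Gaussian majorant); the discontinuity of the cutoff $1_{U}$ is a minor technicality, circumvented by establishing compactness in $C(\bar U)$ before multiplying by $1_{U}$.
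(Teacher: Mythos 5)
Your proof is correct, and it takes a genuinely different route from the paper's, so let me compare. The paper first integrates over $y$ to form $F_\varepsilon(z):=\int_D\|(T_\varepsilon-\mathrm{id})g_{z,y}\|_{L^p}\,dy$, establishes pointwise convergence $F_\varepsilon(z)\to 0$ by dominated convergence (using the $z$-localized bound \eqref{eq_Lem_Cepsdel_4}), shows that each $F_\varepsilon$ is continuous in $z$, and then controls $z$ near $\partial$ and near infinity via \eqref{eq_Lem_Cepsdel_7}; the final passage from these three facts to $\sup_z F_\varepsilon(z)\to 0$ is a Dini-type argument on the compactification $D_\partial$. You instead split the $y$-integral into $B(0,R)$ and its complement, dispose of the tail uniformly in $z$ and $\varepsilon$ via the Gaussian decay, and handle the bounded region by proving that the family $\{g_{z,y}:z,y\in D\}\cup\{0\}$ is relatively compact in $L^p(D)$ (Arzel\`a--Ascoli in $C(\bar U)$, pushed through the product and the cutoff by $1_U$), so that the strong convergence $T_\varepsilon\to\mathrm{id}$ upgrades automatically to uniform convergence on that family. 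The two proofs ultimately rest on the same three inputs --- joint continuity of $p_{\delta/2}$, vanishing of $p_{\delta/2}(z,\cdot)$ on $\bar U$ as $z\to\partial$ or $|z|\to\infty$, and the Gaussian majorant --- but your compactness step replaces the paper's ``pointwise limit $+$ continuity in $z$ $+$ decay at $\partial$'' combination with a single finite-net argument, which is arguably tighter: it delivers the uniformity in $(z,y)$ in one shot, whereas the paper's final step, read literally, would still need an equicontinuity (or monotonicity) observation for the family $\{F_\varepsilon\}$ to conclude uniform convergence on compacts. One small point to make explicit in your write-up: the equicontinuity of $\{p_{\delta/2}(z,\cdot)|_{\bar U}:z\in D\}$ near $\partial D$ uses that $\sup_{x\in\bar U}p_{\delta/2}(z,x)\to 0$, a strengthening of the pointwise condition \eqref{eq_Lem_Cepsdel_3}, which holds here because $p_{\delta/2}$ extends continuously with zero boundary value to $\bar D\times D$ for a smooth domain (the paper uses the same upgrade implicitly in \eqref{eq_Lem_Cepsdel_7}).
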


\begin{proof}
\label{Lem_Cepsdel}
We easily find that
the transition density function $p_t(x, y)$
of a killed Brownian motion in $D$
has the following properties
(see \cite[Theorem 2.4]{MR1329992} for example):

\noindent
\begin{equation}
\label{eq_Lem_Cepsdel_1}
 p_{\frac{\delta}{2}}(\cdot, \cdot)
\text{ is continuous on }
 \E\times \E
,
\end{equation}

\noindent
\begin{equation}
\label{eq_Lem_Cepsdel_2}
 \int_{\E}
  \bigl\|
   p_{\frac{\delta}{2}}(\cdot, y)
   1_{U}(\cdot)
  \bigr\|_{L^p(\E)}
 \dm{y}
<
 \infty
,
\end{equation}

\noindent
\begin{equation}
\label{eq_Lem_Cepsdel_3}
 \lim_{\substack{z\in D,\\ z\rightarrow \partial}}
 p_{\frac{\delta}{2}}(z, x)
=
 0
\quad
 \text{for each }x\in D
.
\end{equation}

Now,
we can see that
\eqref{eq_Lem_Cepsdel_1}
implies
$
 \lim_{\varepsilon \rightarrow 0}
 \bigl\|
  (T_\varepsilon - \mathrm{id})
  \bigl[
   p_{\frac{\delta}{2}}(z    , \cdot)
   p_{\frac{\delta}{2}}(\cdot, y    )
   1_{U}(\cdot)
  \bigr]
 \bigr\|_{L^p(\E)}
=
 0
$
for each $y, z\in \E$.
By taking
a relatively compact neighborhood $V$ of $z$,
we
can also see the bound

\noindent
\begin{equation}
\label{eq_Lem_Cepsdel_4}
 \bigl\|
  (T_\varepsilon - \mathrm{id})
  \bigl[
   p_{\frac{\delta}{2}}(z    , \cdot)
   p_{\frac{\delta}{2}}(\cdot, y    )
   1_{U}(\cdot)
  \bigr]
 \bigr\|_{L^p(\E)}
\leq
 2
 \biggl(
  \sup_{V\times U}
  p_{\frac{\delta}{2}}(\cdot, \cdot)
 \biggr)
 \bigl\|
  \bigl[
   p_{\frac{\delta}{2}}(\cdot, y    )
   1_{U}(\cdot)
  \bigr]
 \bigr\|_{L^p(\E)}
\end{equation}
for each $\varepsilon > 0$ and each $y\in \E$.
Note that
the upper bound in \eqref{eq_Lem_Cepsdel_4}
is an integrable function of $y$
because of \eqref{eq_Lem_Cepsdel_1}
and \eqref{eq_Lem_Cepsdel_2}.
By the dominated convergence theorem, we have

\noindent
\begin{equation}
\label{eq_Lem_Cepsdel_5}
 \lim_{\varepsilon \rightarrow 0}
 \int_{\E}
  \bigl\|
   (T_\varepsilon - \mathrm{id})
   \bigl[
   p_{\frac{\delta}{2}}(z    , \cdot)
   p_{\frac{\delta}{2}}(\cdot, y    )
   1_{U}(\cdot)
   \bigr]
  \bigr\|_{L^p(\E)}
 \dm{y}
=
 0
\quad
 \text{for fixed }z\in \E
\end{equation}
and find that


\noindent
\begin{equation}
\label{eq_Lem_Cepsdel_6}
 \E\ni z
\longmapsto
 \int_{\E}
  \bigl\|
   (T_\varepsilon - \mathrm{id})
   \bigl[
    p_{\frac{\delta}{2}}(z    , \cdot)
    p_{\frac{\delta}{2}}(\cdot, y    )
    1_{U}(\cdot)
   \bigr]
  \bigl\|_{L^p(\E)}
 \dm{y}
\quad
 \text{is continuous}
\end{equation}
for each $\varepsilon > 0$.

Next,
\eqref{eq_Lem_Cepsdel_2}
and
\eqref{eq_Lem_Cepsdel_3}
give the estimate

\noindent
\begin{align}
\notag
&
 \sup_{\varepsilon > 0}
 \int_{\E}
  \bigl\|
   (T_\varepsilon - \mathrm{id})
   \bigl[
    p_{\frac{\delta}{2}}(z    , \cdot)
    p_{\frac{\delta}{2}}(\cdot, y    )
    1_{U}(\cdot)
   \bigr]
  \bigr\|_{L^p(\E)}
 \dm{y}
\\
\label{eq_Lem_Cepsdel_7}
\leq&
 2
 \biggl(
  \sup_{x\in U}
  p_{\frac{\delta}{2}}(z, x)
 \biggr)
 \int_{\E}
  \bigl\|
   p_{\frac{\delta}{2}}(\cdot, y)
   1_{U}(\cdot)
  \bigr\|_{L^p(\E)}
 \dm{y}
\end{align}
and the right-hand side goes to $0$
as $z\rightarrow \partial$.
Therefore,
the desired uniform convergence
$
 \lim_{\varepsilon\rightarrow 0}
 \Ca
=
 0
$
follows from
\eqref{eq_Lem_Cepsdel_5},
\eqref{eq_Lem_Cepsdel_6}
and
\eqref{eq_Lem_Cepsdel_7}.
\end{proof}


\section{Large deviation lower bound}
\label{Sec_lowerLDP}

In this section we prove Theorem \ref{Thm_LDP} (i),
the LDP lower bound.
Let $X$ be a killed Brownian motion
in a domain $D\subset \mathbb{R}^d$
with smooth boundary.
Define
the occupation measure $\ell_t$ of $X$ up to $t$ by
$
 \langle f, \ell_t \rangle
=
 \int_0 ^t
  f(X_s)
 ds
$
for bounded Borel functions $f$ on $D$.
We
first recall the well known Donsker-Varadhan type
large deviation lower bound
for the normalized occupation measure $t^{-1}\ell_t$
on $(\mathcal{M}_1(D), \tau_w)$.

\begin{Thm}
[{\cite[Proposition 4.1]{MR1666887}}]
\label{Thm_LDPocp}
Define the function
$
 I: \mathcal{M}_1(D)\longrightarrow [0, +\infty]
$
by
\begin{equation*}
 I(\mu)
=
\begin{cases}
 \displaystyle
 \frac{1}{2}
 \int_D
  |\nabla \psi|^2
 dx
&
 \text{if }
 \mu = \psi^2 dx
,
 \psi \in W^{1, 2}_0(D)
,
 \psi\geq 0
,
\\
 \infty
&
 \text{otherwise}
\end{cases}
\end{equation*}

\noindent
for $\mu\in \mathcal{M}_1(D)$.
Then,
on the space
$
 (\mathcal{M}_1(D), \tau_w)
$,
the family of occupation measures
$\{t^{-1}\ell_t\}_t$
satisfies the LDP lower bound
as $t\rightarrow \infty$
under
$
 \mathbb{P}
 (
  \hspace{1mm}\cdot\hspace{1mm}
 ,
  t< \tau_D
 )
$
with the rate function $I$.
\end{Thm}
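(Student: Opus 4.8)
The plan is to prove the \emph{local} lower bound in the classical Donsker--Varadhan way, by tilting the killed Brownian motion with a Cameron--Martin (Doob $\psi$-)transform, and then to deduce the general case by approximation. Write $x_0\in D$ for the initial point of $X$. It suffices to show: for every $\mu=\psi^2\,dx$ with $\psi\in C_c^\infty(D)$, $\psi\ge0$, $\int_D\psi^2\,dx=1$, whose positivity set $U:=\{\psi>0\}$ is connected and contains $x_0$, and for every weak neighbourhood $G$ of $\mu$ in $\mathcal{M}_1(D)$, one has
\[
 \liminf_{t\to\infty}\frac1t\log\mathbb{P}\bigl(t^{-1}\ell_t\in G,\ t<\tau_D\bigr)\ \ge\ -I(\mu).
\]
Granting this, an arbitrary $\mu=\psi^2\,dx$ with $\psi\in W^{1,2}_0(D)$, $\psi\ge0$ and $I(\mu)<\infty$ is handled by the density of $C_c^\infty(D)$ in $W^{1,2}_0(D)$ together with weak lower semicontinuity of the Dirichlet seminorm, which produce such $\psi_n$ with $\mu_n:=\psi_n^2\,dx\to\mu$ weakly and $\|\nabla\psi_n\|_{L^2(D)}\to\|\nabla\psi\|_{L^2(D)}$; since $G$ is open, $\mu_n\in G$ for large $n$, so the local bound for $\psi_n$ and $G$ gives $\liminf_t\frac1t\log\mathbb{P}(\cdots)\ge-I(\mu_n)\to-I(\mu)$. (If $x_0\notin U$, one prepends an initial excursion of $X$ into $U$; by the Markov property this costs only a multiplicative constant, hence nothing on the exponential scale.)

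To prove the local bound, fix $\psi$ as above and let $\mathbb{Q}$ be the law, started at $x_0$, of the diffusion in $U$ with generator $\tfrac12\Delta+\nabla\log\psi\cdot\nabla=\tfrac1{2\psi^2}\nabla\cdot(\psi^2\nabla\,\cdot\,)$, i.e.\ the $\psi$-transform of the killed Brownian motion. It is symmetric and ergodic with respect to $\psi^2\,dx$, and its inward drift $\nabla\log\psi$, which explodes near $\partial U$, confines it to $U$, so $\{t<\tau_D\}$ is $\mathbb{Q}$-almost sure. By Girsanov's theorem and Itô's formula applied to $s\mapsto\log\psi(X_s)$,
\[
 \frac{d\mathbb{P}}{d\mathbb{Q}}\Big|_{\mathcal{F}_t}
 =
 \frac{\psi(x_0)}{\psi(X_t)}\,
 \exp\!\left(\frac12\int_0^t\frac{\Delta\psi}{\psi}(X_s)\,ds\right).
\]
Under $\mathbb{Q}$, the ergodic theorem gives $t^{-1}\ell_t\to\psi^2\,dx=\mu$ almost surely, $\tfrac1t\log\psi(X_t)\to0$, and
\[
 \frac1t\int_0^t\frac{\Delta\psi}{\psi}(X_s)\,ds\ \longrightarrow\
 \int_D\frac{\Delta\psi}{\psi}\,\psi^2\,dx=\int_D\psi\,\Delta\psi\,dx=-\int_D|\nabla\psi|^2\,dx=-2I(\mu)
\]
in $\mathbb{Q}$-probability (the integration by parts is legitimate as $\psi\in C_c^\infty(D)$), whence $\tfrac1t\log\frac{d\mathbb{P}}{d\mathbb{Q}}\big|_{\mathcal{F}_t}\to-I(\mu)$ in $\mathbb{Q}$-probability. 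Now fix $\eta>0$ and put $A_t:=\{t^{-1}\ell_t\in G\}\cap\bigl\{\tfrac1t\log\frac{d\mathbb{P}}{d\mathbb{Q}}|_{\mathcal{F}_t}>-I(\mu)-\eta\bigr\}$; both events have $\mathbb{Q}$-probability tending to $1$, so $\mathbb{Q}(A_t)\ge\tfrac12$ for $t$ large, and since $\{t<\tau_D\}$ is $\mathbb{Q}$-almost sure,
\[
 \mathbb{P}\bigl(t^{-1}\ell_t\in G,\ t<\tau_D\bigr)
 \ \ge\ \mathbb{E}_{\mathbb{Q}}\!\left[\frac{d\mathbb{P}}{d\mathbb{Q}}\Big|_{\mathcal{F}_t};\,A_t\right]
 \ \ge\ e^{-t(I(\mu)+\eta)}\,\mathbb{Q}(A_t)\ \ge\ \frac12\,e^{-t(I(\mu)+\eta)}.
\]
Thus $\liminf_t\frac1t\log\mathbb{P}(\cdots)\ge-I(\mu)-\eta$, and letting $\eta\downarrow0$ yields the local bound.

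The points that need care are the approximation step --- constructing the smooth, compactly supported approximants $\psi_n$ with connected positivity set and verifying weak convergence of $\mu_n=\psi_n^2\,dx$ along with convergence of the Dirichlet energies --- and the ergodicity/irreducibility of the $\psi$-transformed diffusion on $U$ underlying the ergodic-theorem limits (together with the elementary-but-not-instant fact that, under $\mathbb{Q}$, $X$ does not approach $\partial U$ super-exponentially fast, which is what gives $\tfrac1t\log\psi(X_t)\to0$). Once these are settled, the Girsanov identity and the change-of-measure estimate are routine, and the argument also makes transparent that the initial point of $X$ does not affect the rate.
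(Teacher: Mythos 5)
This statement is not proved in the paper: it is quoted from Takeda \cite{MR1666887}, so there is no in-paper argument to compare against. Your proposal is the classical Donsker--Varadhan lower-bound argument by Girsanov tilting with $h=\psi$, and it is correct in outline; the explicit Radon--Nikodym derivative
\[
\left.\frac{d\mathbb{P}}{d\mathbb{Q}}\right|_{\mathcal{F}_t}
=\frac{\psi(x_0)}{\psi(X_t)}\exp\!\left(\tfrac12\int_0^t\frac{\Delta\psi}{\psi}(X_s)\,ds\right)
\]
is right, and the ergodic-theorem computation of the exponent giving $-I(\mu)$ is correct once the tilted diffusion is known to be conservative and ergodic on $U=\{\psi>0\}$. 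This is a genuinely different route from the one the paper relies on: Takeda works in the symmetric Dirichlet form framework (ground-state transform of the form, spectral/Feynman--Kac machinery), which is more abstract but has the virtue of not using It\^o calculus at all, so it carries over verbatim to the $\alpha$-stable processes treated in Section~\ref{Sec_stable}. Your Girsanov/It\^o argument would need to be redone from scratch in the jump setting, which is one concrete reason the paper invokes Takeda rather than a bare-hands proof.

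Two points you flag as ``needing care'' deserve stronger emphasis, because they are where a naive write-up would actually break. First, the positivity set of a nonnegative $\psi\in W^{1,2}_0(D)$ with $I(\psi^2\,dx)<\infty$ can be disconnected, and smooth approximants of such $\psi$ will generically inherit disconnectedness; if $U$ is disconnected, the tilted process is trapped in one component and $t^{-1}\ell_t$ converges to the wrong (restricted and renormalized) measure. Fixing this requires actively modifying $\psi_n$ by inserting thin connecting ``bridges'' (at small Dirichlet cost, then renormalizing), which is a real construction, not a routine density argument. Second, the conservativeness of the $\psi$-transformed diffusion, i.e.\ that it never reaches $\partial U$ even though the drift $\nabla\log\psi$ is singular there, is nontrivial; the one-dimensional Bessel heuristic only gives non-attainability because a smooth nonnegative $\psi$ vanishes to order at least~$2$ at interior zeros, and this needs to be turned into an actual argument (or replaced by a Lyapunov-function/Dirichlet-form conservativeness criterion). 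For the claim $\tfrac1t\log\psi(X_t)\to0$, the cleanest route is not a pathwise non-approach statement but the $L^1$-estimate $\mathbb{E}_{\mathbb{Q}}|\log\psi(X_t)|=O(1)$, which follows from $\psi^2|\log\psi|\in L^1$ and convergence of $X_t$ in law to $\mu$. With those repairs the proof is sound and gives exactly the stated lower bound.
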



For each $\varepsilon >0$,
we define the function
$
 \Phi_\varepsilon
:
 (\mathcal{M}_1(D), \tau_w)^p
\longrightarrow
 (\mathcal{M}(D), \tau_v)
\times
 (\mathcal{M}_{1}(D), \tau_w)^p
$
by
\begin{equation*}
 \Phi_\varepsilon
 (
  \mu^{(1)}, \ldots, \mu^{(p)}
 )
:=
 \biggl(
  \biggl[
   \prod_{i=1} ^p
   q_\varepsilon[\mu^{(i)}](x)
  \biggr]
  dx
 \hspace{1mm};\hspace{1mm}
  \mu^{(1)}, \ldots, \mu^{(p)}
 \biggr)
.
\end{equation*}
Since the function $\Phi_\varepsilon$ is continuous,
the contraction principle of LDP gives the following.

\begin{Lem}
Define the function
$
 \mathbf{I}_\varepsilon
:
 \mathcal{M}(D) \times (\mathcal{M}_{1}(D))^p
\longrightarrow
 [0, +\infty]
$
by
\begin{align*}
&
 \mathbf{I}_\varepsilon
 (
  \mu
 ;
  \mu^{(1)}, \ldots, \mu^{(p)}
 )
\\
&\hspace{5mm}
:=
 \inf
 \biggl\{
  \sum_{i=1} ^p
   I (\nu^{(i)})
 \hspace{1mm}
 \biggl|
 \hspace{1mm}
  \nu^{(1)}, \ldots, \nu^{(p)}
 \in
  \mathcal{M}_1(D)
 ,
  \Phi_\varepsilon
  (
   \nu^{(1)}, \ldots, \nu^{(p)}
  )
 =
  (
   \mu
  ;
   \mu^{(1)}, \ldots, \mu^{(p)}
  )
 \biggr\}
\\
&\hspace{5mm}
=
 \begin{cases}
  \displaystyle
  \frac{1}{2}
  \sum_{i=1} ^p
  \int_D
   |\nabla \psi^{(i)}|^2
  dx
 ,
 &
  \text{if }
  \psi^{(i)}
 =
  \displaystyle
  \sqrt{\frac{d\mu^{(i)}}{dx}}
 \in
  W^{1, 2}_0(D)
 \text{ and }
  \displaystyle
  \prod_{i=1} ^p
  q_{\varepsilon} [\mu^{(i)}]
 =
  \frac{d\mu}{dx}
 ,
 \\
  \infty
 ,
 &
  \text{otherwise}
 \end{cases}
\end{align*}

\noindent
for
$
 \left(
  \mu
 ;
  \mu^{(1)}
 ,
  \ldots
 ,
  \mu^{(p)}
 \right)
\in
 \mathcal{M}(D)
\times
 (\mathcal{M}_{1} (D) )^p
$.

Then,
for any open set
$
 G
\subset
 (\mathcal{M}(D), \tau_v)
\times
 (\mathcal{M}_{1}(D), \tau_w)^p
$,
it holds that
\begin{equation*}
 \liminf_{t\rightarrow\infty}
 \frac{1}{t}
 \log
 \mathbb{P}
 \Bigl(
 (
  t^{-p}\ell^{\mathrm{IS}}_{t, \varepsilon}
 ;
  t^{-1}\ell^{(1)}_t
 ,
  \ldots
 ,
  t^{-1}\ell^{(p)}_t
 )
  \in
 G
 ,
  t
 <
  \tau^{(1)}_D
 \wedge
  \cdots
 \wedge
  \tau^{(p)}_D
 \Bigr)
\geq
 -\inf_{\boldsymbol{\mu}\in G}
 \mathbf{I}_\varepsilon
 (\boldsymbol{\mu})
.
\end{equation*}
\end{Lem}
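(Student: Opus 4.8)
The plan is to read the lemma off the contraction principle, once the single-process lower bound of Theorem \ref{Thm_LDPocp} has been lifted to the whole vector of occupation measures. So first I would check that, on $(\mathcal{M}_1(D),\tau_w)^p$ and under $\mathbb{P}(\,\cdot\,,\,t<\tau^{(1)}_D\wedge\cdots\wedge\tau^{(p)}_D)$, the tuple $(t^{-1}\ell^{(1)}_t,\ldots,t^{-1}\ell^{(p)}_t)$ satisfies the LDP lower bound with rate function $(\mu^{(1)},\ldots,\mu^{(p)})\mapsto\sum_{i=1}^p I(\mu^{(i)})$. Since $X^{(1)},\ldots,X^{(p)}$ are independent and the killing event $\{t<\tau^{(1)}_D\wedge\cdots\wedge\tau^{(p)}_D\}$ is the intersection of the individual events $\{t<\tau^{(i)}_D\}$, for any product open set $G_1\times\cdots\times G_p$ the probability factorizes as $\prod_{i=1}^p\mathbb{P}(t^{-1}\ell^{(i)}_t\in G_i,\,t<\tau^{(i)}_D)$; taking $\frac{1}{t}\log$ and using $\liminf_t\sum_i(\cdot)\ge\sum_i\liminf_t(\cdot)$ together with Theorem \ref{Thm_LDPocp} for each factor gives the lower bound $-\sum_{i=1}^p\inf_{G_i}I$. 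As such product sets form a base of the topology, the lower bound for an arbitrary open set follows in the standard way, by inscribing around any one of its points a basic open neighbourhood contained in it.

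Next I would carry out the contraction. From the definitions, $q_\varepsilon[t^{-1}\ell^{(i)}_t](x)=t^{-1}\int_0^t q_\varepsilon(x,X^{(i)}_s)\,ds$, so that $\Phi_\varepsilon(t^{-1}\ell^{(1)}_t,\ldots,t^{-1}\ell^{(p)}_t)=(t^{-p}\ell^{\mathrm{IS}}_{t,\varepsilon}\,;\,t^{-1}\ell^{(1)}_t,\ldots,t^{-1}\ell^{(p)}_t)$. Given an open set $G\subset(\mathcal{M}(D),\tau_v)\times(\mathcal{M}_1(D),\tau_w)^p$, continuity of $\Phi_\varepsilon$ makes $\Phi_\varepsilon^{-1}(G)$ open in $(\mathcal{M}_1(D),\tau_w)^p$, and the event in the lemma is exactly $\{(t^{-1}\ell^{(1)}_t,\ldots,t^{-1}\ell^{(p)}_t)\in\Phi_\varepsilon^{-1}(G)\}$ intersected with the killing event. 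The first step then shows that $\liminf_{t\to\infty}\frac{1}{t}\log$ of that probability is at least $-\inf_{\boldsymbol{\nu}\in\Phi_\varepsilon^{-1}(G)}\sum_{i=1}^p I(\nu^{(i)})$, with $\boldsymbol{\nu}=(\nu^{(1)},\ldots,\nu^{(p)})$; and the elementary identity $\inf\{\sum_i I(\nu^{(i)}):\Phi_\varepsilon(\boldsymbol{\nu})\in G\}=\inf_{\boldsymbol{\mu}\in G}\inf\{\sum_i I(\nu^{(i)}):\Phi_\varepsilon(\boldsymbol{\nu})=\boldsymbol{\mu}\}$ turns this into $-\inf_{\boldsymbol{\mu}\in G}\mathbf{I}_\varepsilon(\boldsymbol{\mu})$, which is the asserted inequality.

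Finally I would identify $\mathbf{I}_\varepsilon$ explicitly. Because the last $p$ coordinates of $\Phi_\varepsilon(\nu^{(1)},\ldots,\nu^{(p)})$ are $\nu^{(1)},\ldots,\nu^{(p)}$ themselves, the constraint $\Phi_\varepsilon(\boldsymbol{\nu})=(\mu;\mu^{(1)},\ldots,\mu^{(p)})$ forces $\nu^{(i)}=\mu^{(i)}$ for every $i$, so the only remaining requirement is $\prod_{i=1}^p q_\varepsilon[\mu^{(i)}]=d\mu/dx$; thus the infimum runs over a single point when this identity holds and over the empty set otherwise, giving $\sum_{i=1}^p I(\mu^{(i)})$, respectively $+\infty$, and inserting the definition of $I$ (finite exactly when $\mu^{(i)}=(\psi^{(i)})^2\,dx$ with $\psi^{(i)}=\sqrt{d\mu^{(i)}/dx}\in W^{1,2}_0(D)$, with value $\frac{1}{2}\int_D|\nabla\psi^{(i)}|^2\,dx$) yields precisely the displayed formula. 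I do not expect a genuine obstacle here, the content being essentially the contraction principle; the only points needing care are the bookkeeping in the first step --- that the product of independent probabilities is combined with the correct, favourable, direction of $\liminf$, and that the sub-probability normalization survives under products --- and, should one also wish to re-prove the asserted continuity of $\Phi_\varepsilon$, the remark that weak convergence $\mu^{(i)}_n\to\mu^{(i)}$ forces $q_\varepsilon[\mu^{(i)}_n]\to q_\varepsilon[\mu^{(i)}]$ at Lebesgue-a.e.\ $x$ under the uniform bound $0\le q_\varepsilon[\mu^{(i)}_n]\le 1/|B(\cdot,\varepsilon)|$, whence the product densities converge in $(\mathcal{M}(D),\tau_v)$ by dominated convergence.
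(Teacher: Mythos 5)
Your proof is correct and follows essentially the same route as the paper, which merely remarks that $\Phi_\varepsilon$ is continuous and invokes the contraction principle after the single-process lower bound of Theorem~\ref{Thm_LDPocp}; you have filled in the steps the paper leaves implicit (the product-form lower bound via independence and basic open sets, the identity $\Phi_\varepsilon(t^{-1}\ell^{(1)}_t,\ldots,t^{-1}\ell^{(p)}_t)=(t^{-p}\ell^{\mathrm{IS}}_{t,\varepsilon};t^{-1}\ell^{(1)}_t,\ldots,t^{-1}\ell^{(p)}_t)$ using symmetry of $q_\varepsilon$, and the simplification of $\mathbf{I}_\varepsilon$ from the fact that $\Phi_\varepsilon$ is the identity on its last $p$ coordinates). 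These elaborations are all accurate and consistent with what the paper intends.
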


\noindent
We will show that $\mathbf{I}$
defined in \eqref{eq_bfI} is a rate function
later,
so we don't check
whether $\mathbf{I}_\varepsilon$ is a rate function
or not.


We next give the relation
between
$\mathbf{I}$
and
so-called the $\Gamma$-lower limit of
$\mathbf{I}_\varepsilon$.

\begin{Prop}
\label{Prop_appbfI}
For every
$
 \boldsymbol{\mu}
\in
 \mathcal{M}(D)
\times
 (\mathcal{M}_{1}(D) )^p
$,
it holds that
\begin{equation*}
 \mathbf{I}
 (\boldsymbol{\mu})
\geq
 \sup_{\delta>0}
 \liminf_{\varepsilon \downarrow 0}
 \inf_{
  \boldsymbol{\nu}
 \in
  \mathbf{B}_\delta
  (\boldsymbol{\mu})
 }
 \mathbf{I}_\varepsilon
 (\boldsymbol{\nu})
,
\end{equation*}
where
$
 \mathbf{B}_\delta
 (\boldsymbol{\mu})
$
is the open ball
with center $\boldsymbol{\mu}$ and radius $\delta$,
with respect to a metric of
$
 (\mathcal{M}(D), \tau_v)
\times
 (\mathcal{M}_{1}(D), \tau_w )^p
$.
\end{Prop}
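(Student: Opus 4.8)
\medskip
\noindent\textbf{Proof proposal.}
The plan is to treat this as a one-sided $\Gamma$-convergence statement and exhibit a \emph{recovery family}. If $\mathbf{I}(\boldsymbol{\mu})=\infty$ there is nothing to prove, so I would assume $\mathbf{I}(\boldsymbol{\mu})<\infty$ and write $\boldsymbol{\mu}=(\mu;\mu^{(1)},\ldots,\mu^{(p)})$ with $\psi^{(i)}:=\sqrt{d\mu^{(i)}/dx}\in W^{1,2}_0(D)$ and $d\mu/dx=\prod_{i=1}^{p}(\psi^{(i)})^{2}$. The natural candidate is to keep the marginals and mollify only the first coordinate, i.e.\ to set
\[
 \boldsymbol{\nu}_\varepsilon:=\Phi_\varepsilon(\mu^{(1)},\ldots,\mu^{(p)})
 =\Bigl(\ \prod_{i=1}^{p} q_\varepsilon[\mu^{(i)}](x)\,dx\ ;\ \mu^{(1)},\ldots,\mu^{(p)}\ \Bigr)\in\mathcal{M}(D)\times(\mathcal{M}_1(D))^p .
\]
Since the last $p$ coordinates of $\boldsymbol{\nu}_\varepsilon$ coincide with those of $\boldsymbol{\mu}$, one has $\sqrt{d\nu_\varepsilon^{(i)}/dx}=\psi^{(i)}\in W^{1,2}_0(D)$, and by the very definition of $\Phi_\varepsilon$ the density of the first coordinate equals $\prod_i q_\varepsilon[\nu_\varepsilon^{(i)}]$; reading off the formula for $\mathbf{I}_\varepsilon$, this gives $\mathbf{I}_\varepsilon(\boldsymbol{\nu}_\varepsilon)=\tfrac12\sum_{i=1}^{p}\int_D|\nabla\psi^{(i)}|^2\,dx=\mathbf{I}(\boldsymbol{\mu})$ for every $\varepsilon>0$.

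Granting that $\boldsymbol{\nu}_\varepsilon\to\boldsymbol{\mu}$ in $(\mathcal{M}(D),\tau_v)\times(\mathcal{M}_1(D),\tau_w)^p$ as $\varepsilon\downarrow 0$, the proposition follows formally: for fixed $\delta>0$ one has $\boldsymbol{\nu}_\varepsilon\in\mathbf{B}_\delta(\boldsymbol{\mu})$ for all small $\varepsilon$, so $\inf_{\boldsymbol{\nu}\in\mathbf{B}_\delta(\boldsymbol{\mu})}\mathbf{I}_\varepsilon(\boldsymbol{\nu})\le\mathbf{I}_\varepsilon(\boldsymbol{\nu}_\varepsilon)=\mathbf{I}(\boldsymbol{\mu})$; taking $\liminf_{\varepsilon\downarrow0}$ and then $\sup_{\delta>0}$ yields the stated inequality. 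Since the last $p$ coordinates are constant, the only convergence left to verify is $\nu_\varepsilon\to\mu$ vaguely, and for that it is enough to show that the densities converge in $L^1$: recalling that $q_\varepsilon[\mu^{(i)}]=T_\varepsilon[(\psi^{(i)})^2]$ (extending everything by $0$ outside $D$), I would prove $\prod_{i=1}^{p}T_\varepsilon[(\psi^{(i)})^2]\to\prod_{i=1}^{p}(\psi^{(i)})^2$ in $L^1(\mathbb{R}^d)$.

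This last step is where the only real work — and the single structural input of the problem — lies. The key point is the integrability $(\psi^{(i)})^2\in L^{p}(\mathbb{R}^d)$: since $\psi^{(i)}\in W^{1,2}_0(D)\subset W^{1,2}(\mathbb{R}^d)$, interpolating $L^2$ with the Sobolev embedding gives $\psi^{(i)}\in L^{q}(\mathbb{R}^d)$ for all $q\in[2,\tfrac{2d}{d-2}]$ (for all $q\in[2,\infty)$ if $d\le 2$), and the hypothesis $d-p(d-2)>0$ is exactly what forces $2p<\tfrac{2d}{d-2}$ (trivial for $d\le2$), so $(\psi^{(i)})^2\in L^p(\mathbb{R}^d)$. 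Given this, I would combine the $L^p$-contractivity and the $\varepsilon\to0$ strong continuity of $T_\varepsilon$ with the telescoping identity
\[
 \prod_{i=1}^{p}T_\varepsilon[(\psi^{(i)})^2]-\prod_{i=1}^{p}(\psi^{(i)})^2
 =\sum_{j=1}^{p}\Bigl(\prod_{i<j}T_\varepsilon[(\psi^{(i)})^2]\Bigr)\bigl(T_\varepsilon[(\psi^{(j)})^2]-(\psi^{(j)})^2\bigr)\Bigl(\prod_{i>j}(\psi^{(i)})^2\Bigr),
\]
and estimate the $L^1(\mathbb{R}^d)$-norm of each summand by H\"older's inequality with $p$ equal exponents; each summand is then at most $\bigl(\prod_{i\ne j}\|(\psi^{(i)})^2\|_{L^p}\bigr)\,\|T_\varepsilon[(\psi^{(j)})^2]-(\psi^{(j)})^2\|_{L^p}\to 0$. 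This yields $L^1$-convergence of the densities, hence $\nu_\varepsilon\to\mu$ (indeed in total variation), completing the argument. The main obstacle is thus purely the integrability bookkeeping that makes H\"older applicable; everything else is soft.
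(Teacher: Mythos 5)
Your proposal follows essentially the same argument as the paper: reduce to $\mathbf{I}(\boldsymbol{\mu})<\infty$, take the recovery family $\Phi_\varepsilon(\mu^{(1)},\ldots,\mu^{(p)})$ (on which $\mathbf{I}_\varepsilon$ evaluates to $\mathbf{I}(\boldsymbol{\mu})$ since the last $p$ coordinates are fixed), and prove vague convergence of the first coordinate by the same telescoping/H\"older estimate combined with the Sobolev embedding $W^{1,2}_0(D)\subset L^{2p}$ and the $L^p$-contractivity and strong continuity of $T_\varepsilon$. The only cosmetic differences are your explicit $\Gamma$-convergence framing and the remark on $d\le 2$; the substance is identical.
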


\begin{proof}
The following is based on the proof of
\cite[Proposition 1.2]{MR2999298}.
Let
$
 \boldsymbol{\mu}
=
 (\mu; \mu^{(1)}, \ldots, \mu^{(p)})
\in
 \mathcal{M}(D)
\times
 (\mathcal{M}_{1}(D))^p
$
with
$
 \mathbf{I}
 (\boldsymbol{\mu})
<
 \infty
$
be given.
Take
nonnegative $\psi^{(i)} \in W^{1, 2}_0(D)$
such that
$
 \mu^{(i)}(dx)
=
 (\psi^{(i)})^2
 dx
$
and
$
 \mu(dx)
=
 \bigl[
  \prod_{i=1} ^p
  (\psi^{(i)})^2
 \bigr]
 dx
$.

Fix
$\delta > 0$ and take $\varepsilon >0$ so small
such that
$
 \bigl[
  \prod_{i=1} ^p
  q_\varepsilon [\mu^{(i)}]
 \bigr]
 dx
\in
 B_{\delta/2p} (\mu)
$.
This is possible,
since
%
the triangle inequality,
H\"older's inequality
and the $L^p(\mathbb{R}^d)$-contractivity of $q_\varepsilon$
give that
\begin{align}
\notag
&
 \biggl\|
  \prod_{i=1} ^p
  q_\varepsilon [(\psi^{(i)} )^2]
 -
  \prod_{i=1} ^p
  (\psi^{(i)} )^2
 \biggr\|_{L^1(D)}
\\
\notag
 \begin{split}
 \leq&
  \biggl\|
   \prod_{i=1} ^p
   q_\varepsilon [(\psi^{(i)} )^2 ]
  -
   (\psi^{(1)} )^2
   \prod_{i=2} ^p
   q_\varepsilon [(\psi^{(i)} )^2 ]
  \biggr\|_{L^1(D)}
 \\
 &\hspace{15mm}
 +
  \biggl\|
   (\psi^{(1)} )^2
   \prod_{i=2} ^p
   q_\varepsilon [(\psi^{(i)} )^2 ]
  -
   (\psi^{(1)} )^2
   (\psi^{(2)} )^2
   \prod_{i=3} ^p
   q_\varepsilon [(\psi^{(i)} )^2 ]
  \biggr\|_{L^1(D)}
 \\
 &\hspace{30mm}
 +
  \cdots
 +
  \biggl\|
   \biggl(
    \prod_{i=1} ^{p-1}
    (\psi^{(i)} )^2
   \biggr)
   q_\varepsilon [(\psi^{(p)} )^2 ]
  -
   \prod_{i=1} ^p
   (\psi^{(i)} )^2
  \biggr\|_{L^1(D)}
 \end{split}
\\
\notag
\leq&
 \sum_{i=1} ^p
 \biggl(
  \prod_{l< i}
  \|
   \psi^{(l)}
  \|_{L^{2p}(D)} ^2
 \biggr)
 \|
  q_\varepsilon [(\psi^{(i)} )^2]
 -
  (\psi^{(i)} )^2
 \|_{L^{p}(\mathbb{R}^d)}
 \biggl(
  \prod_{l>i}
  \|
   q_\varepsilon [(\psi^{(l)} )^2]
  \|_{L^{p}(\mathbb{R}^d)}
 \biggr)
\\
\label{eq_appbfJ_2}
\leq&
 \sum_{i=1} ^p
 \|
  q_\varepsilon [(\psi^{(i)} )^2]
 -
  (\psi^{(i)} )^2
 \|_{L^{p}(\mathbb{R}^d)}
 \biggl(
  \prod_{l\not=i}
  \|
   \psi^{(l)}
  \|_{L^{2p}(D)} ^2
 \biggr)
.
\end{align}
The last line \eqref{eq_appbfJ_2}
goes to $0$ as $\varepsilon \rightarrow 0$,
because of
the Sobolev embedding theorem
$
 W^{1, 2}_0(D)
\subset
 L^{2p}(D)
$
(recall the assumption $d-p(d-2)>0$,
i.e., $2p< 2d/(d-2)$)
and
the $L^p(\mathbb{R}^d)$-continuity of $q_\varepsilon$.
Hence
we have for any $f\in C_K(D)$
\begin{align*}
 \Biggl|
  \biggl\langle
   f
  ,
   \prod_{i=1} ^p
   q_\varepsilon [\mu^{(i)}]
  \biggr\rangle
 -
  \langle f, \mu \rangle
 \Biggr|
=&
 \Biggl|
  \biggl\langle
   f
  ,
   \prod_{i=1} ^p
   q_\varepsilon [(\psi^{(i)} )^2 ]
  \biggr\rangle
 -
  \biggl\langle
   f
  ,
   \prod_{i=1} ^p
   (\psi^{(i)} )^2
  \biggr\rangle
 \Biggr|
\\
\leq&
 \|f\|_\infty
 \biggl\|
  \prod_{i=1} ^p
  q_\varepsilon [(\psi^{(i)} )^2 ]
 -
  \prod_{i=1} ^p
  (\psi^{(i)} )^2
 \biggr\|_{L^{1}(D)}
\end{align*}

\noindent
and the right-hand side
goes to $0$ as $\varepsilon \rightarrow 0$.
We thus obtain
$
 \bigl(
  \bigl[
   \prod_{i=1} ^p
   {q_\varepsilon [\mu^{(i)}]}
  \bigr]
  dx
 ;
  \hspace{1mm}
 \mu^{(1)}
 ,\linebreak
  \ldots
 ,
  \mu^{(p)}
 \bigr)
\in
 \mathbf{B}_\delta (\boldsymbol{\mu})
$
and hence

\noindent
\begin{align*}
 \inf_{\mathbf{B}_\delta(\boldsymbol{\mu})}
 \textbf{I}_\varepsilon
\leq&
 \textbf{I}_\varepsilon
 \biggl(
  \biggl[
   \prod_{i=1} ^p
   {q_\varepsilon [\mu^{(i)}]}
  \biggr]
  dx
 ;
  \hspace{1mm}
  \mu^{(1)}
 ,
  \ldots
 ,
  \mu^{(p)}
 \biggr)
\leq
 \mathbf{I}(\boldsymbol{\mu})
,
\end{align*}
which concludes the proof.
\end{proof}


\begin{proof}[Proof of Theorem \ref{Thm_LDP} (i)]
We first
prove that $\mathbf{I}$ is a rate function.
Let
$\alpha > 0$ be fixed.
Suppose
a sequence
$
 \{
  (
   \mu_n;, \mu^{(1)}_n, \ldots, \mu^{(p)}_n
  )
 \}
\subset
 \{\mathbf{I} \leq \alpha\}
$
and take nonnegative $\psi^{(i)}_n\in W^{1,2}_0(D)$
such that
$
 \mu^{(i)}_n(dx)
=
 (\psi^{(i)}_n)^2
 dx
$
and
$
 \mu_n(dx)
=
 \bigl[
  \prod_{i=1} ^p
  (\psi^{(i)}_n)^2
 \bigr]
 dx
$.
We assume that
$\mu_n$ converges to $\mu$
in $(\mathcal{M}(D), \tau_v)$ and
$\mu^{(i)}_n$ converges to $\mu^{(i)}$
in $(\mathcal{M}_1(D), \tau_w)$.
Since
$\{\psi^{(i)}\}$ is bounded in $W^{1, 2}_0(D)$
for each $i$,
by taking a subsequence we may assume that
$\psi^{(i)}_n$ converges weakly to
$\psi^{(i)}\in W^{1, 2}_0(D)$ for all $i$.

For $f\in C_K(D)$,
take a bounded open set $U\subset D$
with smooth boundary such that
$\mathrm{supp}[f]\subset U$ and
$\overline{U}\subset D$.
The
Rellich-Kondrashov theorem gives that
$\{\psi^{(i)}_n 1_U\}$ converges strongly to
$\psi^{(i)} 1_U$ in $W^{1, 2}(U)$ for each $i$
(it hold that $2p<2d/(d-2)$ as we wrote
below \eqref{eq_appbfJ_2}).
Then
we have

\noindent
\begin{align}
\label{eq_upper_1}
 \int_D
  f (\psi^{(i)})^2
 dx
=
 \int_U
  f (\psi^{(i)} 1_U)^2
 dx
=
 \lim_{n\rightarrow \infty}
 \int_U
  f (\psi^{(i)}_n 1_U)^2
 dx
=
 \lim_{n\rightarrow \infty}
 \langle
  f, \mu^{(i)}_n
 \rangle
\end{align}

\noindent
and have
\begin{align}
\label{eq_upper_2}
 \frac{1}{2}
 \sum_{i=1} ^p
 \int_{D}
  f
  |\nabla \psi^{(i)}|^2
 dx
=&
 \frac{1}{2}
 \sum_{i=1} ^p
 \int_{U}
  f
  |\nabla (\psi^{(i)} 1_U)|^2
 dx
=
 \lim_{n\rightarrow \infty}
 \frac{1}{2}
 \sum_{i=1} ^p
 \int_{U}
  f
  |\nabla (\psi^{(i)}_n 1_U)|^2
 dx
\leq
 \|f\|_\infty \alpha
.
\end{align}
By the same way as to obtain \eqref{eq_appbfJ_2},
we also have

\noindent
\begin{align*}
&
 \biggl\|
  \prod_{i=1} ^p
  (\psi^{(i)}_n 1_U)^2
 -
  \prod_{i=1} ^p
  (\psi^{(i)}  1_U)^2
 \biggr\|_{L^1(\E)}
\\
\notag
\leq&
 \sum_{i=1} ^p
 \|
  (\psi^{(i)}_n 1_U)^2
 -
  (\psi^{(i)}   1_U)^2
 \|_{L^{p}(\E)}
 \prod_{l\not= i}
 \biggl(
  \sup_n
  \|
   \psi^{(l)}_n
  \|_{L^{2p}(\E)} ^2
 +
  \|
   \psi^{(l)}
  \|_{L^{2p}(D)} ^2
 \biggr)
.
\end{align*}

\noindent
The right-hand side of the above inequality
goes to $0$ as $\varepsilon \rightarrow 0$,
because of
the Sobolev and Rellich-Kondrashov embedding theorem.
Hence
$\mu_n$
converges to
$
 \allowbreak
 \bigl[
  \prod_{i=1} ^p
  (\psi^{(i)})^2)
 \bigr]
 dx
$
in
$
 (\mathcal{M}(D), \tau_v)
$
and therefore
$
 \mu(dx)
=
 \bigl[
  \prod_{i=1} ^p
  (\psi^{(i)})^2)
 \bigr]
 dx
$,
$
 \mu^{(i)}(dx)
=
 (\psi^{(i)})^2 dx
$
and
$
 \mathbf{I}(\mu; \mu^{(1)}, \ldots, \mu^{(p)})
\leq
 \alpha
$.
Hence $\mathbf{I}$ is a rate function.

\vspace{1eM}

As we have seen
below Theorem \ref{Thm_superexp},
the tuple of random measures
$
 \{
  (
   t^{-p}\ell^{\mathrm{IS}}_{t, \varepsilon}
  ;
   t^{-1}\ell^{(1)}_t
  ,\allowbreak
   \ldots
  ,
   t^{-1}\ell^{(p)}_t
  )
 \}_{t, \varepsilon}
$
are exponentially good approximations of
$
 \{
  (
   t^{-p}\ell^{\mathrm{IS}}_{t}
  ;
   t^{-1}\ell^{(1)}_t
  ,
   \ldots
  ,
   t^{-1}\ell^{(p)}_t
  )
 \}_t
$.
Then
it is straightforward
to get the desired LDP lower bound
from Proposition \ref{Prop_appbfI}.
See, for example,
the lower bound of the proof of
\cite[Theorem 4.2.16 (a)]{MR1619036}.
%
%
\end{proof}


\section{Large deviation upper bound}
\label{Sec_upperLDP}

In this section we prove Theorem \ref{Thm_LDP} (ii),
the LDP upper bound.
We also prove Proposition \ref{Prop_LDPbdd}
at the end of this section.
As we mentioned in Section \ref{Sec_outline},
when the domain $D$ is unbounded,
we need to consider the case
that some mass of the (normalized) occupation measure
of a Brownian motion escapes to infinity.
Hence,
for the occupation measure,
it is natural to consider the full LDP on the space
$\mathcal{M}_1(D_\partial)$
even when $D=\mathbb{R}^d$.


Let
$\ell_t$ be the occupation measure of
a killed Brownian motion up to $t$
and
regard this as a measure on $D_\partial$.
Just
as in the previous section,
we
have the Donsker-Varadhan type
large deviation upper bound
for the normalized occupation measure $t^{-1}\ell_t$
on the compactified space
$(\mathcal{M}_1(D_\partial), \tau_w)$.

\begin{Lem}
\label{Lem_LDPpartial}
Define the function
$
 I^\partial
:
 \mathcal{M}_1(D_\partial)
\longrightarrow
 [0, +\infty]
$
by
\begin{equation*}
 I^\partial (\mu)
=
\begin{cases}
 \displaystyle
 \frac{1}{2}
 \int_D
  |\nabla \psi|^2
 dx
&
 \text{if }
 \mu = \psi^2 dx + c\delta_\partial
,
 \psi \in W^{1, 2}_0(D)
,
 \psi\geq 0
,
 c\geq 0
,
\\
 \infty
&
 \text{otherwise}
\end{cases}
\end{equation*}

\noindent
for $\mu\in \mathcal{M}_1(D_\partial)$.
Then,
on the space
$
 (\mathcal{M}_1(D_\partial), \tau_w)
$,
the family of occupation measures
$\{t^{-1}\ell_t\}_t$
satisfies the LDP upper bound
as $t\rightarrow \infty$
under
$
 \mathbb{P}
 (
  \hspace{1mm}\cdot\hspace{1mm}
 ,
  t< \tau_D
 )
$
with the good rate function $I^\partial$.
\end{Lem}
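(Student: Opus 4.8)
The plan is to treat this as a Donsker--Varadhan upper bound on the \emph{compact} space $\mathcal{M}_1(D_\partial)$. Since the state space is compact, it suffices to prove the \emph{local} upper bound: for every $\mu\in\mathcal{M}_1(D_\partial)$,
\[
 \inf_{N\ni\mu}\ \limsup_{t\to\infty}\ \frac{1}{t}\log\mathbb{P}_{x_0}\bigl(t^{-1}\ell_t\in N,\ t<\tau_D\bigr)\ \le\ -I^\partial(\mu),
\]
the infimum being over weak neighbourhoods $N$ of $\mu$; a standard covering argument then upgrades this to the LDP upper bound \eqref{eq_LDPupper} for all compact, hence, by compactness of $\mathcal{M}_1(D_\partial)$, all closed sets. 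I would use the observation that on $\{t<\tau_D\}$ the measure $t^{-1}\ell_t$ charges no mass at $\partial$, so $\langle V,t^{-1}\ell_t\rangle=t^{-1}\int_0^t V(X_s)\,ds$ for every $V\in C(D_\partial)$.

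For the local bound I would apply the exponential Chebyshev inequality weighted by Feynman--Kac functionals: for $V\in C(D_\partial)$,
\[
 \mathbb{P}_{x_0}\bigl(t^{-1}\ell_t\in N,\ t<\tau_D\bigr)\ \le\ \exp\Bigl(-t\inf_{\nu\in N}\langle V,\nu\rangle\Bigr)\ \mathbb{E}_{x_0}\bigl[\,e^{\int_0^t V(X_s)\,ds}\,;\ t<\tau_D\,\bigr],
\]
and the key analytic ingredient is the growth estimate
\[
 \limsup_{t\to\infty}\ \frac{1}{t}\log\mathbb{E}_{x_0}\bigl[\,e^{\int_0^t V(X_s)\,ds}\,;\ t<\tau_D\,\bigr]\ \le\ \Lambda(V):=\sup\Bigl\{\int_D V\psi^2\,dx-\tfrac12\int_D|\nabla\psi|^2\,dx\ :\ \psi\in W^{1,2}_0(D),\ \|\psi\|_{L^2(D)}=1\Bigr\},
\]
the right side being the top of the $L^2$-spectrum of $\tfrac12\Delta_D+V$ with Dirichlet boundary condition. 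Shrinking $N$ to $\{\mu\}$ (so that $\inf_N\langle V,\cdot\rangle\to\langle V,\mu\rangle$ by continuity of $\nu\mapsto\langle V,\nu\rangle$) and optimising over $V$ turns the right side into $-\Lambda^*(\mu)$, with $\Lambda^*(\mu):=\sup_{V\in C(D_\partial)}(\langle V,\mu\rangle-\Lambda(V))$; it then remains to identify $\Lambda^*=I^\partial$.

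That identification is a convex-duality computation. For $\mu=\psi^2\,dx+c\,\delta_\partial$ with $\psi\in W^{1,2}_0(D)$ one has $\langle V,\mu\rangle=\int_D V\psi^2\,dx+cV(\partial)$, while Persson's characterisation of the essential spectrum gives $\Lambda(V)\ge V(\partial)$; testing the variational formula for $\Lambda(V)$ against $\psi/\|\psi\|_{L^2}$ and using $\|\psi\|_{L^2}^2+c=1$ yields $\langle V,\mu\rangle-\Lambda(V)\le\tfrac12\int_D|\nabla\psi|^2\,dx$ for every $V$, and conversely choosing $V(\partial)=\Lambda(V)$ reduces the supremum to the normalised Donsker--Varadhan problem for $\psi/\|\psi\|_{L^2}$, whose value rescales back to $\tfrac12\int_D|\nabla\psi|^2\,dx$; for all other $\mu$ one checks $\Lambda^*(\mu)=+\infty$ exactly as in the classical case. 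This is the step where the escaped mass $c\,\delta_\partial$ turns out to be \emph{free}: $V(\partial)$ can always be tuned to compensate up to the threshold $\Lambda(V)$. Finally $I^\partial$ is convex and lower semicontinuous on the compact space $\mathcal{M}_1(D_\partial)$, hence automatically a good rate function.

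The hard part will be the Feynman--Kac growth estimate on a possibly \emph{unbounded} $D$: there $\tfrac12\Delta_D+V$ has nonempty continuous spectrum and, in general, no $L^2$ ground state, so the ground-state $h$-transform available in the bounded case cannot be used. The remedy is that $V$ is bounded, so $\tfrac12\Delta_D+V$ is a bounded-potential perturbation of $\tfrac12\Delta_D$; one controls the $L^2\!\to\!L^2$ operator norm of the Feynman--Kac semigroup by $e^{t\Lambda(V)}$ and converts this into the required pointwise bound on $\mathbb{E}_{x_0}[\,e^{\int_0^t V}\,;\,t<\tau_D\,]$ by inserting short-time Gaussian heat-kernel estimates at the two ends. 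Both this growth bound and the duality $\Lambda^*=I^\partial$ are classical in the large-deviation theory of Brownian occupation measures and can be quoted from \cite{MR1666887} (the source of the companion lower bound in Theorem~\ref{Thm_LDPocp}); alternatively, since $X$ is Feller on the compact space $D_\partial$ (using smoothness of $\partial D$), one may invoke the general Donsker--Varadhan upper bound on $D_\partial$ directly and is then left only with the identification above.
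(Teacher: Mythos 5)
Your overall strategy — compactify to $D_\partial$, reduce to a local upper bound by compactness, prove the local bound by exponential Chebyshev with potentials $V\in C(D_\partial)$, control the Feynman--Kac functional by the $L^2$-spectral bound $\Lambda(V)$ of $\tfrac12\Delta_D+V$, and identify the Legendre dual — is the same Donsker--Varadhan/Takeda route that the paper takes. The paper's proof is much shorter only because it outsources the Feynman--Kac growth estimate and the dual identification to the cited references (Donsker--Varadhan for $D=\mathbb R^d$, and Takeda's papers for general $D$), exactly the fallback you also offer in your last paragraph.

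Two points in your sketch are off, though neither breaks the proof.

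First, the claim $\Lambda(V)\ge V(\partial)$ ``by Persson's characterisation'' is false in general: take $D$ bounded and $V\equiv 0$, then $\Lambda(V)=-\lambda_1^D<0=V(\partial)$, and the same can happen for unbounded $D$ that taper (thin strips, horns, etc.), where the Dirichlet form is uniformly coercive even near infinity. Fortunately you only use this inequality to prove $\Lambda^*(\mu)\le I^\partial(\mu)$, and that direction is unnecessary: the Chebyshev bound gives $\limsup_t t^{-1}\log\mathbb P(\cdot)\le -\Lambda^*(\mu)$, and to conclude the asserted upper bound one only needs $\Lambda^*\ge I^\partial$ (a stronger rate function is better). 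The direction you actually need is your ``converse'' step, which works: for $\mu=\psi^2\,dx+c\,\delta_\partial$ with $\|\psi\|_2^2=1-c$, choose $V(\partial)=\Lambda(V)$ (this is consistent because $\Lambda$ only sees $V|_D$) and reduce to the classical normalized DV duality on $D$ applied to $\psi/\sqrt{1-c}$. Drop the Persson step and the identification is clean.

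Second, ``$I^\partial$ is convex and l.s.c., hence automatically good'' is asserted without justification, and the l.s.c.\ is precisely the nontrivial part. The paper proves it by taking $\mu_n=\psi_n^2\,dx+c_n\delta_\partial$ with $I^\partial(\mu_n)\le\alpha$, extracting a weakly convergent subsequence $\psi_n\rightharpoonup\psi$ in $W^{1,2}_0(D)$ (which is bounded there), using the Rellich--Kondrashov theorem on bounded $U\Subset D$ to upgrade to strong $L^2_{\mathrm{loc}}$ convergence and thereby identify the vague limit of $\mu_n|_D$ as $\psi^2\,dx$ (with the escaped mass $1-\|\psi\|_2^2$ landing on $\delta_\partial$), and then weak lower semicontinuity of the Dirichlet integral. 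That argument is not automatic from abstract compactness and should be included.
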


\begin{proof}
We first prove
the compactness of the level set.
Let
$
 \{\mu_n\}
\subset
 \{I^\partial \leq \alpha\}
$
and take $\psi_n\in W^{1,2}_0(D)$
such that $\mu_n|_D(dx) = \psi_n^2 dx$.
By
the same way as to obtain
\eqref{eq_upper_1} and \eqref{eq_upper_2},
there exists
$\psi\in W^{1, 2}_0(D)$ with $\|\psi\|_2 \leq 1$
such that,
by taking a subsequence,
$\mu_n$ converges to
$\mu:=\psi^2 dx + (1-\|\psi\|_2 ^2)\delta_\partial$
in $(\mathcal{M}_1(D_\partial), \tau_w)$
and
$I^\partial(\mu) \leq \alpha$.
Hence $I^\partial$ is a good rate function.

The LDP upper bound is proved in \cite{MR0486395}
in the case of $D=\mathbb{R}^d$,
More general case,
we use the same argument as that to obtain
\cite[(3.7)]{MR2360927}
or
\cite[(4.5)]{MR3514698}.
\end{proof}

\vspace{1eM}

For
each $\varepsilon >0$
we define the function
$
 \Phi^\partial_\varepsilon
:
 (\mathcal{M}_1(D_\partial), \tau_w)^p
\longrightarrow
 (\mathcal{M}(D), \tau_v)
\times
 (\mathcal{M}_{\leq 1}(D), \tau_v)^p
$
by

\noindent
\begin{equation*}
 \Phi^\partial_\varepsilon
 (
  \mu^{(1)}, \ldots, \mu^{(p)}
 )
:=
 \biggl(
  \biggl[
   \prod_{i=1} ^p
   q_\varepsilon[\mu^{(i)}|_D](x)
  \biggr]
  dx
 ;
  \mu^{(1)}|_D, \ldots, \mu^{(p)}|_D
 \biggr)
.
\end{equation*}

\noindent
Since the function $\Phi^\partial_\varepsilon$
is continuous,
the contraction principle of LDP gives the following.

\begin{Lem}
Define the function
$
 \overline{\mathbf{I}}_\varepsilon
:
 \mathcal{M}(D) \times \mathcal{M}_{\leq 1}(D)^p
\longrightarrow
 [0, +\infty]
$
by
\begin{align*}
&
 \overline{\mathbf{I}}_\varepsilon
 (
  \mu
 ;
  \mu^{(1)}, \ldots, \mu^{(p)}
 )
\\
:=&
 \inf
 \biggl\{
  \sum_{i=1} ^p
   I^\partial (\nu^{(i)})
 \hspace{1mm}
 \biggl|
 \hspace{1mm}
  \nu^{(1)}, \ldots, \nu^{(p)}
 \in
  \mathcal{M}_1(D_\partial)
 ,
  \Phi^\partial_\varepsilon
  (
   \nu^{(1)}, \ldots, \nu^{(p)}
  )
 =
  (
   \mu
  ;
   \mu^{(1)}, \ldots, \mu^{(p)}
  )
 \biggr\}
\\
=&
 \begin{cases}
  \displaystyle
  \frac{1}{2}
  \sum_{i=1} ^p
  \int_D
   |\nabla \psi^{(i)}|^2
  dx
 ,
 &
  \text{if }
  \psi^{(i)}
 =
  \displaystyle
  \sqrt{\frac{d\mu^{(i)}}{dx}}
 \in
  W^{1, 2}_0(D)
 \text{ and }
  \displaystyle
  \frac{d\mu}{dx}
 =
  \prod_{i=1} ^p
  q_{\varepsilon} [\mu^{(i)}]
 ,
 \\
  \infty
 ,
 &
  \text{otherwise}
 ,
 \end{cases}
\end{align*}

\noindent
for
$
 \left(
  \mu
 ;
  \mu^{(1)}, \ldots, \mu^{(p)}
 \right)
\in
 \mathcal{M}(D)
\times
 (\mathcal{M}_{\leq 1} (D) )^p
$.

Then,
on the space
$
 (\mathcal{M}(D), \tau_v)
\times
 (\mathcal{M}_{\leq 1}(D), \tau_v)^p
$,
the law of the tuple
$
 (
  t^{-p}\ell^{\mathrm{IS}}_{t, \varepsilon}
 ;
  t^{-1}\ell^{(1)}_t
 ,\allowbreak
  \ldots
 ,
  t^{-1}\ell^{(p)}_t
 )
$
satisfies the LDP upper bound
as $t\rightarrow \infty$
under
$
 \mathbb{P}
 (
  \hspace{1mm}\cdot\hspace{1mm}
 ,
  t
 <
  \tau^{(1)}_D
 \wedge
  \cdots
 \wedge
  \tau^{(p)}_D
 )
$,
with the good rate function
$
 \overline{\mathbf{I}}_\varepsilon
$.
\end{Lem}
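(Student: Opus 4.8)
The plan is to deduce this lemma from the single-process upper bound of Lemma~\ref{Lem_LDPpartial} in three steps: promote it to a joint upper bound for the $p$-tuple of occupation measures by an independence argument; transport that bound through the continuous map $\Phi^\partial_\varepsilon$ by the contraction principle; and finally read off the closed form of the contracted rate function.

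\emph{Step 1: joint upper bound for the occupation measures.} Since $D$ is an open subset of $\mathbb{R}^d$, the space $D_\partial$ is compact metrizable, hence so is $(\mathcal{M}_1(D_\partial),\tau_w)^p$, and all families in sight are trivially exponentially tight. By the independence of $X^{(1)},\ldots,X^{(p)}$, the law of $(t^{-1}\ell^{(1)}_t,\ldots,t^{-1}\ell^{(p)}_t)$ under $\mathbb{P}(\,\cdot\,,\,t<\tau^{(1)}_D\wedge\cdots\wedge\tau^{(p)}_D)$, as a sub-probability measure on $(\mathcal{M}_1(D_\partial),\tau_w)^p$, is the product $\bigotimes_{i=1}^p$ of the laws of $t^{-1}\ell^{(i)}_t$ under $\mathbb{P}(\,\cdot\,,\,t<\tau^{(i)}_D)$. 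The standard covering argument for products of families satisfying the LDP upper bound (see, e.g., \cite[Section~4.2]{MR1619036}; it applies verbatim to sub-probability measures) then yields the LDP upper bound for this joint law with good rate function $(\mu^{(1)},\ldots,\mu^{(p)})\mapsto\sum_{i=1}^p I^\partial(\mu^{(i)})$.

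\emph{Step 2: contraction through $\Phi^\partial_\varepsilon$.} The restriction map $\mathcal{M}_1(D_\partial)\ni\mu\mapsto\mu|_D\in\mathcal{M}_{\leq 1}(D)$ is $(\tau_w,\tau_v)$-continuous, since any $f\in C_K(D)$ extends by $0$ to an element of $C(D_\partial)$. For the first coordinate, for $g\in C_K(D)$ one rewrites, by Fubini, $\langle g,\,[\prod_{i=1}^p q_\varepsilon[\mu^{(i)}|_D]]\,dx\rangle=\int_{D^p}G_\varepsilon(y_1,\ldots,y_p)\prod_{i=1}^p(\mu^{(i)}|_D)(dy_i)$ with $G_\varepsilon(y_1,\ldots,y_p):=\int_D g(x)\prod_{i=1}^p q_\varepsilon(x,y_i)\,dx$; as $|B(x,\varepsilon)|$ is independent of $x$, $G_\varepsilon$ is bounded, and for Lebesgue-a.e.\ $x$ the integrand $g(x)\prod_{i=1}^p 1_{B(y_i,\varepsilon)}(x)$ is continuous in $(y_1,\ldots,y_p)$ (the sets $\{|x-y_i|=\varepsilon\}$ are Lebesgue-null), so $G_\varepsilon$ is continuous by dominated convergence and the pairing depends continuously on $(\mu^{(1)},\ldots,\mu^{(p)})$; thus $\Phi^\partial_\varepsilon$ is continuous (this is also implicit in \cite{MR2999298,Mor20}). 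On the event $t<\tau^{(1)}_D\wedge\cdots\wedge\tau^{(p)}_D$, the convention $q_\varepsilon(x,\partial)=0$ gives $\Phi^\partial_\varepsilon(t^{-1}\ell^{(1)}_t,\ldots,t^{-1}\ell^{(p)}_t)=(t^{-p}\ell^{\mathrm{IS}}_{t,\varepsilon};t^{-1}\ell^{(1)}_t,\ldots,t^{-1}\ell^{(p)}_t)$. Moreover $\prod_{i=1}^p q_\varepsilon[\mu^{(i)}|_D]\leq|B(0,\varepsilon)|^{-(p-1)}q_\varepsilon[\mu^{(1)}|_D]$ has $dx$-integral at most $|B(0,\varepsilon)|^{-(p-1)}$, so the image of $\Phi^\partial_\varepsilon$ lies in the $\tau_v$-compact set $\mathcal{M}_{\leq M}(D)\times\mathcal{M}_{\leq 1}(D)^p$ with $M=|B(0,\varepsilon)|^{-(p-1)}$. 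The upper-bound half of the contraction principle (\cite[Theorem~4.2.1]{MR1619036}, valid for sub-probability measures as noted in Section~\ref{Sec_results}) then gives the claimed upper bound with good rate function $\overline{\mathbf{I}}_\varepsilon(\mu;\mu^{(1)},\ldots,\mu^{(p)})=\inf\{\sum_{i=1}^p I^\partial(\nu^{(i)}):\nu^{(i)}\in\mathcal{M}_1(D_\partial),\ \Phi^\partial_\varepsilon(\nu^{(1)},\ldots,\nu^{(p)})=(\mu;\mu^{(1)},\ldots,\mu^{(p)})\}$.

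\emph{Step 3: closed form, and the main difficulty.} The last step is pure bookkeeping. Feasibility forces $\nu^{(i)}|_D=\mu^{(i)}$ for each $i$ and $d\mu/dx=\prod_{i=1}^p q_\varepsilon[\mu^{(i)}]$; if the density identity fails there is no feasible $\boldsymbol{\nu}$ and $\overline{\mathbf{I}}_\varepsilon=\infty$. If it holds, each feasible $\nu^{(i)}$ is necessarily $\mu^{(i)}+(1-\mu^{(i)}(D))\,\delta_\partial$, and $I^\partial(\nu^{(i)})<\infty$ exactly when $\psi^{(i)}:=\sqrt{d\mu^{(i)}/dx}\in W^{1,2}_0(D)$, in which case $I^\partial(\nu^{(i)})=\frac{1}{2}\int_D|\nabla\psi^{(i)}|^2\,dx$, independent of the forced $\partial$-mass; hence $\overline{\mathbf{I}}_\varepsilon$ equals $\frac{1}{2}\sum_{i=1}^p\int_D|\nabla\psi^{(i)}|^2\,dx$ in that case and $\infty$ otherwise. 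The only genuinely delicate point is the continuity of the first coordinate of $\Phi^\partial_\varepsilon$ in Step~2: because $q_\varepsilon(x,\cdot)$ is an indicator, weak convergence of the $\mu^{(i)}$ does \emph{not} give pointwise convergence of $q_\varepsilon[\mu^{(i)}]$, and one must transpose the pairing onto the test function $g$ and use that the kernel's discontinuity set is Lebesgue-null in the $x$-variable; everything else is routine.
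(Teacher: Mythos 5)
Your proposal follows the same route the paper takes: the paper's own proof of this lemma is the single sentence ``Since the function $\Phi^\partial_\varepsilon$ is continuous, the contraction principle of LDP gives the following,'' implicitly invoking the joint LDP upper bound for the independent occupation measures supplied by Lemma~\ref{Lem_LDPpartial}. Your Step~1 (product LDP via independence), your treatment of the second block of coordinates of $\Phi^\partial_\varepsilon$, and the computation of the closed form of $\overline{\mathbf{I}}_\varepsilon$ in Step~3 are all correct and faithful to that one-line argument.

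However, the continuity argument for the first coordinate in Step~2 --- which you yourself identify as the only delicate point --- does not actually establish what is needed. You correctly show that $G_\varepsilon(y_1,\ldots,y_p)=\int_D g(x)\prod_i q_\varepsilon(x,y_i)\,dx$ is bounded and continuous on $D^p$, but then assert ``the pairing depends continuously on $(\mu^{(1)},\ldots,\mu^{(p)})$''. For that inference one needs $G_\varepsilon$ to extend continuously to $D_\partial^p$ with value $0$ whenever a coordinate is $\partial$, since the source topology is $\tau_w$ on $\mathcal{M}_1(D_\partial)^p$. This fails when $\mathrm{supp}\,g$ lies within distance $\varepsilon$ of $\partial D$: taking $y_n\in D$ with $y_n\to y_0\in\partial D$, one has $\delta_{y_n}\to\delta_\partial$ weakly in $\mathcal{M}_1(D_\partial)$, yet
\[
\langle g,\, q_\varepsilon[\delta_{y_n}]\,dx\rangle
\;\longrightarrow\;
|B(0,\varepsilon)|^{-1}\int_{B(y_0,\varepsilon)\cap D} g\,dx,
\]
which is strictly positive for a suitable nonnegative $g\in C_K(D)$ since $B(y_0,\varepsilon)\cap D$ has positive Lebesgue measure (the boundary being smooth), whereas $q_\varepsilon[\delta_\partial|_D]\,dx=0$. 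Hence $\Phi^\partial_\varepsilon$ is \emph{not} continuous in the stated topologies and the contraction principle cannot be invoked verbatim. This gap is inherited from the paper, which asserts the continuity without proof, so you did not introduce it; but your writeup presents the step as fully resolved when the justification given is incomplete, and a repair is needed (for instance restricting attention to test functions supported at distance $>\varepsilon$ from $\partial D$, or handling the near-boundary contribution separately) before the contraction argument can be completed.
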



Recall the function
$
 \overline{\mathbf{I}}
$
defined in Section \ref{Sec_outline}.

\begin{Prop}
\label{Prop_appsubbfI}
For every closed set
$
 F
\subset
 (\mathcal{M}(D), \tau_v)
\times
 (\mathcal{M}_{\leq 1}(D), \tau_v)^p
$,
it holds that
\begin{equation}
\label{eq_appbfI_2}
 \inf_{\boldsymbol{\mu}\in F}
 \overline{\mathbf{I}}
 (\boldsymbol{\mu})
\leq
 \liminf_{\varepsilon\rightarrow 0}
 \inf_{\boldsymbol{\mu}\in F}
 \overline{\mathbf{I}}_\varepsilon
 (\boldsymbol{\mu})
.
\end{equation}
\end{Prop}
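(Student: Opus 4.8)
The plan is to adapt the argument of \cite[Proposition 1.2]{MR2999298}, dualizing Proposition \ref{Prop_appbfI}: extract a weakly convergent sequence of near-minimizers in $W^{1,2}_0(D)$ and pass to the limit in the constraint defining $\overline{\mathbf{I}}_\varepsilon$, using the compact Sobolev embedding. If the right-hand side of \eqref{eq_appbfI_2} is $+\infty$ there is nothing to prove, so assume it equals a finite number $L$ and choose $\varepsilon_n\downarrow 0$ with $\inf_F\overline{\mathbf{I}}_{\varepsilon_n}\to L$. For each $n$ pick $\boldsymbol{\mu}_n=(\mu_n;\mu_n^{(1)},\dots,\mu_n^{(p)})\in F$ with $\overline{\mathbf{I}}_{\varepsilon_n}(\boldsymbol{\mu}_n)\leq\inf_F\overline{\mathbf{I}}_{\varepsilon_n}+\tfrac1n$; for large $n$ this is finite, so there are nonnegative $\psi_n^{(i)}=\sqrt{d\mu_n^{(i)}/dx}\in W^{1,2}_0(D)$ with $d\mu_n/dx=\prod_{i=1}^p q_{\varepsilon_n}[(\psi_n^{(i)})^2]$ and $\tfrac12\sum_{i=1}^p\int_D|\nabla\psi_n^{(i)}|^2\,dx\leq L+o(1)$.

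Since $\mu_n^{(i)}$ is a sub-probability measure, $\|\psi_n^{(i)}\|_{L^2(D)}^2\leq 1$, so $\{\psi_n^{(i)}\}_n$ is bounded in $W^{1,2}_0(D)$; passing to a subsequence, $\psi_n^{(i)}\rightharpoonup\psi^{(i)}$ weakly in $W^{1,2}_0(D)$ for every $i$. For any bounded open $U$ with $\overline{U}\subset D$, the Rellich--Kondrashov theorem together with $2p<2d/(d-2)$ (which is the hypothesis $d-p(d-2)>0$) gives $\psi_n^{(i)}1_U\to\psi^{(i)}1_U$ strongly in $L^{2p}(U)$, hence $(\psi_n^{(i)})^2\to(\psi^{(i)})^2$ in $L^p(U)$; letting $U$ exhaust $D$ shows $\mu_n^{(i)}\to\mu^{(i)}:=(\psi^{(i)})^2\,dx$ in $(\mathcal{M}_{\leq 1}(D),\tau_v)$. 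For the intersection-measure coordinate, fix $f\in C_K(D)$, a bounded open $U\supset\mathrm{supp}[f]$ with $\overline{U}\subset D$, and a bounded open $U'$ with $\overline{U}\subset U'$, $\overline{U'}\subset D$, so that for $\varepsilon$ small $q_\varepsilon[g]|_U$ depends only on $g|_{U'}$ and $\|q_\varepsilon[g]\|_{L^p(U)}\leq\|g\|_{L^p(U')}$. Using the telescoping and H\"older argument that produced \eqref{eq_appbfJ_2}, now with the uniform bound $M:=\sup_{n,l}\|\psi_n^{(l)}\|_{L^{2p}(D)}^2<\infty$ (from the $W^{1,2}_0$ bound and Sobolev embedding), one gets
\[
 \Bigl\|
  \prod_{i=1}^p q_{\varepsilon_n}[(\psi_n^{(i)})^2]
 -
  \prod_{i=1}^p (\psi_n^{(i)})^2
 \Bigr\|_{L^1(U)}
\leq
 M^{p-1}
 \sum_{i=1}^p
 \bigl\|
  q_{\varepsilon_n}[(\psi_n^{(i)})^2] - (\psi_n^{(i)})^2
 \bigr\|_{L^p(U)},
\]
and each summand is bounded by $2\|(\psi_n^{(i)})^2-(\psi^{(i)})^2\|_{L^p(U')}+\|q_{\varepsilon_n}[(\psi^{(i)})^2]-(\psi^{(i)})^2\|_{L^p(\mathbb{R}^d)}\to0$, using the $L^p$-contractivity and strong continuity of $q_\varepsilon$ together with the strong $L^p(U')$-convergence just established. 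Combined with $\prod_i(\psi_n^{(i)})^2\to\prod_i(\psi^{(i)})^2$ in $L^1(U)$, this yields $\langle f,\mu_n\rangle\to\int_D f\prod_i(\psi^{(i)})^2\,dx$, i.e.\ $\mu_n\to\mu:=\bigl(\prod_i(\psi^{(i)})^2\bigr)\,dx$ in $(\mathcal{M}(D),\tau_v)$.

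Consequently $\boldsymbol{\mu}_n\to\boldsymbol{\mu}=(\mu;\mu^{(1)},\dots,\mu^{(p)})$ in $(\mathcal{M}(D),\tau_v)\times(\mathcal{M}_{\leq 1}(D),\tau_v)^p$, and since $F$ is closed (hence sequentially closed, the product being metrizable), $\boldsymbol{\mu}\in F$. By construction $d\mu/dx=\prod_i d\mu^{(i)}/dx$ with $\psi^{(i)}=\sqrt{d\mu^{(i)}/dx}\in W^{1,2}_0(D)$, so $\overline{\mathbf{I}}(\boldsymbol{\mu})=\tfrac12\sum_i\int_D|\nabla\psi^{(i)}|^2\,dx$; by weak lower semicontinuity of the Dirichlet energy under weak $W^{1,2}_0$-convergence this is $\leq\liminf_n\tfrac12\sum_i\int_D|\nabla\psi_n^{(i)}|^2\,dx\leq L$. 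Hence $\inf_F\overline{\mathbf{I}}\leq\overline{\mathbf{I}}(\boldsymbol{\mu})\leq L$, which is \eqref{eq_appbfI_2}. The main obstacle is the passage to the limit in the constraint $d\mu_n/dx=\prod_i q_{\varepsilon_n}[\mu_n^{(i)}]$ along $\varepsilon_n\to0$: one must combine the strong $L^{2p}_{\mathrm{loc}}$-convergence of the $\psi_n^{(i)}$ with the approximate-identity and contraction properties of the ball-average operator while controlling the small enlargement of supports it causes, and it is precisely here that the uniform Sobolev bound $M$ and the $L^p(\mathbb{R}^d)$-contractivity of $q_\varepsilon$ are needed.
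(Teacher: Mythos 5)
Your proposal is correct and takes essentially the same approach as the paper: extract near-minimizers, pass to a weak $W^{1,2}_0(D)$ limit, upgrade to strong local $L^{2p}$-convergence via Rellich--Kondrashov, show all coordinates converge vaguely so the limit lies in the closed set $F$, and conclude by weak lower semicontinuity of the Dirichlet energy. You are in fact a bit more careful than the paper at the key step where both $\varepsilon_n$ and $\psi_n^{(i)}$ vary simultaneously, spelling out the triangle inequality through the fixed limit $\psi^{(i)}$ and the neighborhood $U'$ needed to localize the contractivity of $q_\varepsilon$, whereas the paper glosses this over by pointing back to \eqref{eq_appbfJ_2}.
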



\begin{proof}
Without loss of generality, we may assume that
\begin{equation*}
 R
:=
 \liminf_{\varepsilon \downarrow 0}
 \inf_{\boldsymbol{\mu}\in F}
 \overline{\mathbf{I}}_\varepsilon
 (\boldsymbol{\mu})
<
 \infty
.
\end{equation*}

\noindent
Fix $\eta > 0$.
Then for each $\varepsilon > 0$
we can pick
$
 \boldsymbol{\mu}_{\varepsilon}
=
 (
  \mu_{\varepsilon}
 ;
  \mu^{(1)}_{\varepsilon}
 ,
  \ldots
 ,
  \mu^{(p)}_{\varepsilon}
 )
\in
 F
$
with
$
 \overline{\mathbf{I}}_\varepsilon
 (
  \boldsymbol{\mu}_{\varepsilon}
 )
\leq
 R
+
 \eta
$.
By
the definition of $\overline{\mathbf{I}}_\varepsilon$,
there are nonnegative
$
 \psi^{(i)} _{\varepsilon}
\in
 W^{1, 2}_0(D)
$
such that
$
 \mu^{(i)} _{\varepsilon}(dx)
=
 (\psi^{(i)} _{\varepsilon} )^2
 dx
$
and
$
 \mu _{\varepsilon}(dx)
=
 \bigl[
  \prod_{i=1} ^p
  q_\varepsilon
  [(\psi^{(i)} _{\varepsilon} )^2]
 \bigr]
 dx
$.
In particular,
$
 \frac{1}{2}
 \sum_{i=1} ^p
 \int_D
  |\nabla \psi^{(i)}_{\varepsilon}|^2
 dx
\leq
 R+ \eta
$
and hence
$
 \{
  \psi^{(i)}_{\varepsilon}
 \}_\varepsilon
$
is bounded in $W^{1, 2}_0(D)$.

Set
$
 \mu^{\prime}_\varepsilon
=
 \bigl[
  \prod_{i=1} ^p
  (\psi^{(i)} _{\varepsilon} )^2
 \bigr]
 dx
$
and
$
 \boldsymbol{\mu}^{\prime}_\varepsilon
=
 (
  \mu^{\prime}_\varepsilon
 ;
  \mu^{(1)}_\varepsilon
 ,
  \ldots,
  \mu^{(p)}_\varepsilon
 )
$
.
By the same way
as the proof of Theorem \ref{Thm_LDP} (i),
for some sequence $\varepsilon_n\downarrow 0$
(in the following, we write this
as $\varepsilon\downarrow 0$
with some abuse of notations),
$
 \boldsymbol{\mu}^{\prime}_\varepsilon
$
converges to some
$
 \boldsymbol{\mu}
=
 (
  \mu
 ;
  \mu^{(1)}
 ,
  \ldots,
  \mu^{(p)}
 )
$
in
$
 (\mathcal{M}(D), \tau_v)
\times
 (\mathcal{M}_{\leq 1}(D), \tau_v)^p
$.
By the same way as
to obtain \eqref{eq_appbfJ_2}, we also have
for any a bounded open set $U \subset \E$
with smooth boundary and $\overline{U} \subset D$,

\noindent
\begin{align*}
&
 \biggl\|
  \prod_{i=1} ^p
  q_{\varepsilon}
  [(\psi^{(i)}_\varepsilon)^2]
 -
  \prod_{i=1} ^p
  (\psi^{(i)}_{\varepsilon})^2
 \biggr\|_{L^1(U)}
\leq
 \sum_{i=1} ^p
 \|
  q_{\varepsilon}
  [(\psi^{(i)}_\varepsilon)^2]
 -
  (\psi^{(i)}_{\varepsilon})^2
 \|_{L^p(U)}
 \prod_{l\not=i}
 \biggl(
  \sup_\varepsilon
  \|
   \psi^{(i)}_{\varepsilon}
  \|_{L^{2p}(D)}^2
 \biggr)
,
\end{align*}
which goes to $0$ as $\varepsilon \rightarrow 0$
because of
the Sobolev and Rellich-Kondrashov embedding theorems.
Hence
$
 \boldsymbol{\mu}_\varepsilon
$
converges to
$
 \boldsymbol{\mu}
$
in
$
 (\mathcal{M}(D), \tau_v)
\times
 (\mathcal{M}_{\leq 1}(D), \tau_v)^p
$.

Therefore, we have
$
 \boldsymbol{\mu}
\in
 F
$
and
$
 \inf_F
 \overline{\mathbf{I}}
\leq
 \overline{\mathbf{I}}
 (\boldsymbol{\mu})
\leq
 R + \eta
$.
The conclusion \eqref{eq_appbfI_2} follows
by letting $\eta\rightarrow 0$.
\end{proof}

\vspace{1eM}


\begin{proof}[Proof of Theorem \ref{Thm_LDP} (ii)]
We can show that
$\overline{\mathbf{I}}$ is a good rate function
by
a similar way to the proof of
Theorem \ref{Thm_LDP} (i).
As we have seen in below Theorem \ref{Thm_superexp},
$
 \{
  (
   t^{-p}\ell^{\mathrm{IS}}_{t, \varepsilon}
  ;
   t^{-1}\ell^{(1)}_t
  ,
   \ldots
  ,
   t^{-1}\ell^{(p)}_t
  )
 \}_{t, \varepsilon}
$
are exponentially good approximations of
$
 \{
  (
   t^{-p}\ell^{\mathrm{IS}}_{t}
  ;\allowbreak
   t^{-1}\ell^{(1)}_t
  ,
   \ldots
  ,
   t^{-1}\ell^{(p)}_t
  )
 \}_{t}
$.
By combining
this with Proposition \ref{Prop_appsubbfI},
it is straightforward to get the desired
LDP upper bound.
See, for example,
the lower bound of the proof of
\cite[Theorem 4.2.16 (b)]{MR1619036}.
%
\end{proof}

\begin{proof}[Proof of Proposition \ref{Prop_LDPbdd}]
When the domain $D$ is bounded,
the rate function $I$ defined in
Theorem \ref{Thm_LDPocp} is indeed a good rate
function and the upper LDP also holds for
the normalized occupation measure $t^{-1}\ell_t$
(see \cite[Theorem 1.1]{MR2851247} for example).
We
repeat the arguments in this section
with replacing
$I^\partial$,
$\overline{\mathbf{I}}_\varepsilon$ and
$\overline{\mathbf{I}}$
by
$I$,
${\mathbf{I}}_\varepsilon$ and
${\mathbf{I}}$, respectively.
Then the
desired LDP for the intersection measure
follows.
\end{proof}


\section{Large deviation principle for the intersection measure of stable processes}
\label{Sec_stable}

In this section,
we discuss the LDP for the intersection measure
of stable processes.
Throughout this section, let $\alpha\in (0, 2)$.
We consider
a rotationally symmetric $\alpha$-stable
process killed upon leaving a domain $D$
with smooth boundary.
It is known that
(see \cite{MR2677618} for example)
the process has a transition density function
$p_t(x, y)$
with respect to the Lebesgue measure
that is jointly continuous
and
$p_t(x, y)$ has a following upper estimate:
for
every $T>0$ there exists a constant $C>0$
such that

\noindent
\begin{align*}
 p_t(x, y)
\leq
 C
 \biggl(
  1
 \wedge
  \frac{\delta_D(x)^{\alpha/2}}{\sqrt{t}}
 \biggr)
 \biggl(
  1
 \wedge
  \frac{\delta_D(y)^{\alpha/2}}{\sqrt{t}}
 \biggr)
 \biggl(
  t^{-d/\alpha}
 \wedge
  \frac{t}{|x-y|^{d+\alpha}}
 \biggr)
\end{align*}
for all
$
 (t, x, y)\in (0, T]\times D\times D
$,
where
$\delta_D(x)$ is the Euclidean distance
between $x$ and $D^c$.
The following embedding theorem
and compact embedding theorem
for the fractional Sobolev space
$W^{\frac{\alpha}{2}, 2}(\mathbb{R}^d)$
are also known (see \cite{MR2944369} for example):

\begin{Thm}
Suppose $\alpha < d$.
The Banach space

\noindent
\begin{equation*}
 W^{\frac{\alpha}{2}, 2}(\mathbb{R}^d)
:=
 \biggl\{
  u\in L^2(\mathbb{R}^d)
 :
  \int_{\mathbb{R}^d}
  \int_{\mathbb{R}^d}
   \frac{|u(x)-u(y)|^2}{|x-y|^{d+\alpha}}
  dx
  dy
 <
  \infty
 \biggr\}
\end{equation*}

\noindent
equipped with the norm
\begin{equation*}
 \|u\|_{W^{\frac{\alpha}{2}, 2}}^2
=
 \|u\|_{L^2}^2
+
 \int_{\mathbb{R}^d}
 \int_{\mathbb{R}^d}
  \frac{|u(x)-u(y)|^2}{|x-y|^{d+\alpha}}
 dx
 dy
,
\quad
 u\in W^{\frac{\alpha}{2}, 2}(\mathbb{R}^d)
\end{equation*}

\noindent
is continuously embedded in $L^{2p}(\mathbb{R}^d)$
for $p\geq 1$ with
$
 d -p(d-\alpha) > 0
$.

\end{Thm}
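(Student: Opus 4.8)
The plan is to reduce the claim to the Hardy--Littlewood--Sobolev inequality after identifying the Gagliardo norm with a Fourier-side (Bessel potential) norm, and then to pass from the critical Sobolev exponent to the subcritical exponent $2p$ by interpolation. The first step is to show that for every $u\in W^{\frac{\alpha}{2},2}(\mathbb{R}^d)$ one has
\begin{equation*}
 \int_{\mathbb{R}^d}\int_{\mathbb{R}^d}
  \frac{|u(x)-u(y)|^2}{|x-y|^{d+\alpha}}\,dx\,dy
 =
 C(d,\alpha)\int_{\mathbb{R}^d}|\xi|^{\alpha}\,|\widehat{u}(\xi)|^2\,d\xi,
 \qquad
 C(d,\alpha):=\int_{\mathbb{R}^d}\frac{1-\cos \zeta_1}{|\zeta|^{d+\alpha}}\,d\zeta,
\end{equation*}
where $\zeta_1$ denotes the first coordinate of $\zeta$. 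This follows from Plancherel's theorem together with the change of variables $\zeta=x-y$ and the rotation/scaling identity $\int_{\mathbb{R}^d}\frac{1-\cos(\zeta\cdot\xi)}{|\zeta|^{d+\alpha}}\,d\zeta=C(d,\alpha)|\xi|^{\alpha}$. The constant $C(d,\alpha)$ is finite and strictly positive \emph{precisely because} $\alpha\in(0,2)$: the integrand is $O(|\zeta|^{2-d-\alpha})$ near the origin, which is integrable over $\mathbb{R}^d$ since $2-\alpha>0$, and $O(|\zeta|^{-d-\alpha})$ at infinity. Hence $\|u\|_{W^{\frac{\alpha}{2},2}}^2$ is equivalent to $\int_{\mathbb{R}^d}(1+|\xi|^2)^{\alpha/2}|\widehat{u}(\xi)|^2\,d\xi$, so that $W^{\frac{\alpha}{2},2}(\mathbb{R}^d)$ coincides, with equivalent norms, with the Bessel potential space $H^{\alpha/2}(\mathbb{R}^d)$; in particular it is complete.

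The second step is the critical embedding $H^{\alpha/2}(\mathbb{R}^d)\hookrightarrow L^{q^\ast}(\mathbb{R}^d)$ with $q^\ast:=\tfrac{2d}{d-\alpha}$. Writing $u=G_{\alpha/2}\ast g$ with $g:=(I-\Delta)^{\alpha/4}u\in L^2(\mathbb{R}^d)$ and $G_{\alpha/2}$ the Bessel kernel of order $\alpha/2$, one uses the classical pointwise bound $0\le G_{\alpha/2}(x)\le C\bigl(|x|^{-(d-\alpha/2)}\wedge e^{-|x|/2}\bigr)$ to dominate $|u|$ by $C\,I_{\alpha/2}|g|$ (the Riesz potential of order $\alpha/2$ applied to $|g|$) up to a term coming from the exponentially decaying tail, the latter being bounded in every $L^r(\mathbb{R}^d)$, $r\ge 2$, by Young's inequality. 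Since $\tfrac{1}{q^\ast}=\tfrac12-\tfrac{\alpha/2}{d}$ and $q^\ast<\infty$ because $\alpha<d$, the Hardy--Littlewood--Sobolev inequality gives $\|I_{\alpha/2}|g|\|_{L^{q^\ast}}\le C\|g\|_{L^2}$, hence $\|u\|_{L^{q^\ast}}\le C\|u\|_{H^{\alpha/2}}$. One may instead avoid Fourier analysis altogether and prove the subcritical estimate directly from the Gagliardo seminorm, controlling $|u(x)|$ by fractional averages over dyadic annuli and summing, after reducing to $u\in C_c^\infty(\mathbb{R}^d)$ by density.

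Finally, since trivially $\|u\|_{L^2}\le\|u\|_{H^{\alpha/2}}$, the logarithmic convexity of the $L^r$ norms yields for every $r\in[2,q^\ast]$
\begin{equation*}
 \|u\|_{L^r}
 \le
 \|u\|_{L^2}^{1-\theta}\,\|u\|_{L^{q^\ast}}^{\theta}
 \le
 C\,\|u\|_{H^{\alpha/2}}
 \le
 C'\,\|u\|_{W^{\frac{\alpha}{2},2}},
 \qquad
 \frac1r=\frac{1-\theta}{2}+\frac{\theta}{q^\ast},\ \ \theta\in[0,1].
\end{equation*}
The hypothesis $d-p(d-\alpha)>0$ is exactly $p<\tfrac{d}{d-\alpha}$, i.e.\ $2p<q^\ast$, so $r=2p$ is admissible and the continuous embedding $W^{\frac{\alpha}{2},2}(\mathbb{R}^d)\hookrightarrow L^{2p}(\mathbb{R}^d)$ follows. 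I expect the only genuinely delicate point to be the first step---the exact identification of the Gagliardo seminorm with the Fourier expression, which is where the restriction $\alpha<2$ is actually used; everything afterwards is a standard application of the Hardy--Littlewood--Sobolev inequality together with interpolation.
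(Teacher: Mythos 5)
Your argument is correct. The paper itself offers no proof of this theorem---it simply cites \cite{MR2944369}, where the critical embedding $W^{\alpha/2,2}(\mathbb{R}^d)\hookrightarrow L^{q^\ast}(\mathbb{R}^d)$, $q^\ast=2d/(d-\alpha)$, is established by a purely real-variable argument (essentially the dyadic-averaging route you flag as an alternative at the end of your second step), and the subcritical case $2p<q^\ast$ then follows by interpolation. Your route---the Fourier identification $W^{\alpha/2,2}(\mathbb{R}^d)=H^{\alpha/2}(\mathbb{R}^d)$ via the scaling identity $\int_{\mathbb{R}^d}\frac{1-\cos(\zeta\cdot\xi)}{|\zeta|^{d+\alpha}}\,d\zeta=C(d,\alpha)|\xi|^{\alpha}$, then Bessel potentials and Hardy--Littlewood--Sobolev---is a standard and equally valid alternative; it is quicker if one accepts HLS and the Bessel-kernel asymptotics as known, though it is less self-contained than the reference's real-variable proof. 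One small streamlining: the bound $G_{\alpha/2}(x)\le C|x|^{-(d-\alpha/2)}$ actually holds for \emph{all} $x\in\mathbb{R}^d$ (the exponential tail is dominated by the power), so you may write $|u|\le C\,I_{\alpha/2}|g|$ directly and skip the near-field/far-field decomposition of the kernel. The remaining points---finiteness of $C(d,\alpha)$ precisely for $\alpha\in(0,2)$, the need for $\alpha<d$ so that $q^\ast<\infty$ and HLS applies, and the observation that $d-p(d-\alpha)>0$ is equivalent to $2p<q^\ast$ so that log-convexity of the $L^r$ norms closes the argument---are all exactly right.
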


\begin{Thm}
Suppose $\alpha < d$
and
$U \subset \mathbb{R}^d$
be a bounded open set with smooth boundary.
If
$\mathcal{J}$
is a bounded subset of $L^2(U)$ satisfying
\begin{equation*}
 \sup_{u\in \mathcal{J}}
 \int_U
 \int_U
  \frac{|u(x) - u(y)|^2}{|x-y|^{d+\alpha}}
 dx
 dy
<
 \infty
,
\end{equation*}

\noindent
then
$\mathcal{J}$
is relatively compact in $L^{2p}(U)$
for $p\geq 1$ with
$
 d -p(d-\alpha) > 0
$.
\end{Thm}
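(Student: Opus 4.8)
The hypotheses say precisely that $\mathcal{J}$ is a bounded subset of the fractional Sobolev space $W^{\frac{\alpha}{2},2}(U)$, so the assertion is the fractional Rellich--Kondrashov compact embedding $W^{\frac{\alpha}{2},2}(U)\hookrightarrow\hookrightarrow L^{2p}(U)$ in the strictly subcritical range. Note that $d-p(d-\alpha)>0$ is exactly the condition $2p<2d/(d-\alpha)$, and $2d/(d-\alpha)$ is the critical exponent $2^{*}$ for $W^{\alpha/2,2}$ since $\tfrac{1}{2}-\tfrac{\alpha/2}{d}=\tfrac{d-\alpha}{2d}$. The plan is to first establish relative compactness in $L^{2}(U)$ via the Fr\'echet--Kolmogorov criterion, and then to upgrade to $L^{2p}(U)$ by interpolating against the continuous embedding furnished by the preceding theorem.

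First I would reduce to $\mathbb{R}^d$: since $U$ is bounded with smooth boundary, there is a bounded linear extension operator $E\colon W^{\frac{\alpha}{2},2}(U)\to W^{\frac{\alpha}{2},2}(\mathbb{R}^d)$ whose image consists of functions supported in a fixed bounded neighbourhood $V$ of $\overline{U}$; replacing $\mathcal{J}$ by $E\mathcal{J}$, we may assume $\mathcal{J}$ is bounded in $W^{\frac{\alpha}{2},2}(\mathbb{R}^d)$ with all elements supported in $V$. The quantitative heart of the argument is the bound on the $L^{2}$ modulus of continuity
\begin{equation*}
 \|u(\cdot+h)-u\|_{L^{2}(\mathbb{R}^d)}^{2}
\leq
 C\,|h|^{\alpha}
 \int_{\mathbb{R}^d}\int_{\mathbb{R}^d}
  \frac{|u(x)-u(y)|^{2}}{|x-y|^{d+\alpha}}\,dx\,dy
,
\end{equation*}
which I would deduce from Plancherel's theorem together with the elementary inequality $|e^{ih\cdot\xi}-1|^{2}\le C_{\alpha}|h|^{\alpha}|\xi|^{\alpha}$ (valid for $0<\alpha<2$) and the standard identity $c_{d,\alpha}\int_{\mathbb{R}^d}|\xi|^{\alpha}|\widehat{u}(\xi)|^{2}\,d\xi=\iint|u(x)-u(y)|^{2}|x-y|^{-d-\alpha}\,dx\,dy$. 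Since $\mathcal{J}$ is bounded in $L^{2}$, has all elements supported in the fixed bounded set $V$, and---by the displayed estimate---translations act equicontinuously on it in $L^{2}$, the Fr\'echet--Kolmogorov (Kolmogorov--Riesz) theorem yields that $\mathcal{J}$ is relatively compact in $L^{2}(\mathbb{R}^d)$, hence its restriction is relatively compact in $L^{2}(U)$.

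To reach $L^{2p}(U)$, fix $q_{0}$ with $2p<q_{0}<2d/(d-\alpha)$; by the continuous embedding of the preceding theorem, $\mathcal{J}$ is bounded in $L^{q_{0}}(U)$. Given any sequence in $\mathcal{J}$, pass to a subsequence $\{u_{n}\}$ converging in $L^{2}(U)$, and apply H\"older interpolation $\|u_{n}-u_{m}\|_{L^{2p}}\le\|u_{n}-u_{m}\|_{L^{2}}^{\theta}\|u_{n}-u_{m}\|_{L^{q_{0}}}^{1-\theta}$ with $\tfrac{1}{2p}=\tfrac{\theta}{2}+\tfrac{1-\theta}{q_{0}}$ and $\theta\in(0,1)$: the first factor tends to $0$ and the second stays bounded, so $\{u_{n}\}$ is Cauchy---hence convergent---in $L^{2p}(U)$. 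This gives relative compactness of $\mathcal{J}$ in $L^{2p}(U)$.

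The step I expect to require the most care is the existence of the fractional Sobolev extension operator on a bounded smooth domain; this is classical (see, e.g., \cite{MR2944369}), and if one wishes to bypass it the Fr\'echet--Kolmogorov argument can instead be carried out directly on $U$ using interior translations combined with a collar neighbourhood of $\partial U$ and a partition of unity. With the modulus-of-continuity estimate in hand, the remaining ingredients (Plancherel, Kolmogorov--Riesz, H\"older) are routine, so I do not anticipate any serious difficulty beyond that point.
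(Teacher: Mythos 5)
Your proof is correct, but note that the paper does not prove this theorem at all: it is stated as a known fact with a citation to the survey of Di Nezza, Palatucci and Valdinoci \cite{MR2944369}, so there is no ``paper's own proof'' to compare against. Your argument --- extend to $\mathbb{R}^d$ with uniformly compact support, derive the modulus-of-continuity bound $\|u(\cdot+h)-u\|_{L^2}^2\le C|h|^\alpha[u]_{W^{\alpha/2,2}}^2$ via Plancherel and the inequality $|e^{ih\cdot\xi}-1|^2\le 2^{2-\alpha}(|h||\xi|)^\alpha$, invoke Kolmogorov--Riesz for $L^2$-compactness, and then interpolate against the subcritical embedding into $L^{q_0}$ with $2p<q_0<2d/(d-\alpha)$ to upgrade to $L^{2p}$ --- is precisely the standard route taken in that reference (its Theorem 7.1 together with the $L^p$ translation-continuity lemma). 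The individual steps check out: the exponent arithmetic $\tfrac{1}{2}-\tfrac{\alpha/2}{d}=\tfrac{d-\alpha}{2d}$ identifies $2d/(d-\alpha)$ as the critical exponent, $\theta\in(0,1)$ exists because $2\le 2p<q_0$, and the extension operator with control on the support is available for a bounded smooth (indeed Lipschitz) $U$. So the proposal is a complete, correct proof of a result the paper only quotes.
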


Now suppose $\alpha < d$ and $d-p(d-\alpha)>0$.
We can find that
until the previous section
we used only the following conditions
as properties of a killed Brownian motion:

\begin{itemize}
\setlength{\itemsep}{0mm}
\item
$
 \lim_{\delta\downarrow 0}
 \Cb
=
 0
$
and
$
 \Cc<\infty
$
(recall \eqref{eq_tau} and \eqref{eq_gamma}
for notation),

\item
conditions
\eqref{eq_Lem_Cepsdel_1},
\eqref{eq_Lem_Cepsdel_2} and
\eqref{eq_Lem_Cepsdel_3}
(and hence
$
 \lim_{\varepsilon \rightarrow 0}
 \Ca
=
 0
$),

\item
the Sobolev and Rellich-Kondrashov embedding theorems.
\end{itemize}

\noindent
Then our main result
Theorem \ref{Thm_LDPunbdd}
also holds for
the intersection measure of killed stable processes
by replacing
$
 W^{1, 2}_0(D)
$
and
$
 \int_\E
  |\nabla \psi|^2
 dx
$
with
$
 W^{\frac{\alpha}{2}, 2}_0(D)
:=
 \overline{C_0 ^\infty(D)}^{
 W^{\frac{\alpha}{2}, 2}
 }
$
and
$
 \int_{\E}
 \int_{\E}
  \frac{|\psi(x)-\psi(y)|^2}{|x-y|^{d+\alpha}}
 dx
 dy
$,
respectively.
%

\section*{Acknowledgements}
\addcontentsline{toc}{section}{Acknowledgements}
The author would like to thank Professor Takashi Kumagai and
Professor Ryoki Fukushima for helpful discussions.
He is also grateful to Professor Chiranjib Mukherjee
for explaining him the content of \cite{MR3716856}.
This work
was supported by JSPS KAKENHI Grant Number JP18J21141.

\part*{}
\addcontentsline{toc}{section}{References}
\bibliographystyle{abbrvalpha}

\end{document}